\newcommand{\MA}{\mathfrak{A}}
\newcommand{\MC}{\mathfrak{C}}
\newcommand{\Law}{\mbox{Law}}
\newcommand{\ZZ}{\mathbb{Z}}
\newcommand{\CW}{\mathcal{W}}
\newcommand{\LLa}{\mbox{LlogL}}
\newcommand{\LLn}{\mbox{\footnotesize LlogL}}
\newcommand{\Lve}{\lVert}
\newcommand{\Div}{{\rm div}}
\newcommand{\Rve}{\rVert}
\newcommand{\DeltaA}{A}
\newcommand{\BDG}{Burkholder--Davis--Gundy }
\newcommand{\ul}{\underline}
\newcommand{\ol}{\overline}
\newcommand\delb[1]{}
\newcommand\deln[1]{}
\newcounter{lil11}
\newenvironment{steps1}
{\begin{list} { \bf Step \Roman{lil11}:} { \usecounter{lil11}
\setlength{\leftmargin}{0.0cm}
\setlength{\topsep}{0.2cm} \setlength{\itemsep}{0.0cm}
\setlength{\parsep}{0.1cm} \setlength{\itemindent}{0.8cm}
\setlength{\parskip}{0.0cm}}} {\end{list}}
\newcounter{lil22}
\newenvironment{steps-2}
{\em \begin{list} {  \roman{lil22}:} {\usecounter{lil22} \em
\setlength{\leftmargin}{1.3cm}
\setlength{\topsep}{0.2cm} \setlength{\itemsep}{0.0cm}
\setlength{\parsep}{0.1cm} \setlength{\itemindent}{0.4cm}
\setlength{\parskip}{0.0cm}}} {\end{list}}
\newcommand{\baray}{\begin{array}{rcl}}
\newcommand{\earay}{\end{array}}
\newcommand{\barray}{\begin{array}{rcl}}
\newcommand{\earray}{\end{array}}
\newcommand\dela[1]{}
\newcommand{\bcase}{\begin{cases}}
\newcommand{\ecase}{\end{cases}}
\newcommand\MS{\mathfrak{S}}
\newcommand\MF{\mathfrak{F}}
\newcommand\del[1]{}
\newcommand\del[1]{}
\def\vt{\lambda}
\def\eps{\varepsilon}
\newcommand{\bu}{\bar u_m}
\newcommand{\lk}{\left}
\newcommand{\lqq}{\lefteqn}
\newcommand{\rk}{\right}
\newcommand{\la}{{\langle}}
\newcommand{\ra}{{\rangle}}
\newcommand{\LL}{L} 
\newcommand{\ep} {\varepsilon }
\newcommand{\be} {\begin{enumerate} }
\newcommand{\ee} {\end{enumerate} }
\newcommand{\BB} {E}
\newcommand{\CO}{{{ \mathcal O }}}
\newcommand{\CE}{{{ \mathcal E }}}
\newcommand{\CC}{{{ \mathcal C }}}
\newcommand{\CT}{{{ \mathcal T }}}
\newcommand{\MT}{{{ \mathfrak T }}}
\newcommand{\Mr}{{{ \mathfrak R }}}
\newcommand{\CA}{{{ \mathcal A }}}
\newcommand{\CH}{{{ \mathcal H }}}
\newcommand{\CS}{{{ \mathcal S }}}
\newcommand{\CG}{{{ \mathcal G }}}
\newcommand{\CB}{{{ \mathcal B }}}
\newcommand{\CM}{{{ \mathcal M }}}
\newcommand{\CP}{{{ \mathcal P }}}
\newcommand{\BF}{{{ \mathbb{F} }}}
\newcommand{\CF}{{{ \mathcal F }}}
\newcommand{\CN}{{{ \mathcal N }}}
\newcommand{\RR}{{\mathbb{R}}}
\newcommand{\WW}{{\mathbb{W}}}
\newcommand{\NN}{\mathbb{N}} 
\newcommand{\PP}{{\mathbb{P}}}
\newcommand{\EE}{ \mathbb{E} }
\newcommand{\DEQS}{\begin{eqnarray*}}
\newcommand{\EEQS}{\end{eqnarray*}}
\newcommand{\DEQSZ}{\begin{eqnarray}}
\newcommand{\EEQSZ}{\end{eqnarray}}
\newcommand{\DEQ}{\begin{eqnarray}}
\newcommand{\EEQ}{\end{eqnarray}}
\theoremstyle{plain}
\newtheorem{theorem}{Theorem}[section]
\newtheorem{notation}{Notation}[section]
\newtheorem{claim}{Claim}[section]
\newtheorem{lemma}[theorem]{Lemma}
\newtheorem{corollary}[theorem]{Corollary}
\newtheorem{hypo}[theorem]{Assumption}
\newtheorem{assumption}[theorem]{Assumption}
\newtheorem{definition}[theorem]{Definition}
\newtheorem{remark}[theorem]{Remark}
\newtheorem{proposition}[theorem]{Proposition}
\numberwithin{equation}{section}
\newcommand\red[1]{{\color{red}#1}}
\numberwithin{equation}{section} \allowdisplaybreaks
\begin{document}

\title[The 1D stochastic Keller--Segel model]{The one-dimensional stochastic
Keller--Segel model with time-homogeneous spatial Wiener processes}

\author{Erika Hausenblas}
   \address{%
   Department of Mathematics,
	Montanuniversitaet Leoben,
	Austria.}

\author[Debopriya Mukherjee]{Debopriya Mukherjee}

\address{%
School of Mathematics and Statistics\\
The University of New South Wales, Sydney 2052, Australia.}


\author{Thanh Tran}
\address{%
School of Mathematics and Statistics\\
The University of New South Wales,
Sydney 2052, Australia.}

\date{\today}
\thanks{The first two authors of the paper are supported by Austrian Science Foundation, project number P 32295. The first author is also supported by Science Visiting Research Fellowship, UNSW Sydney, during the time of her visit to UNSW. The last two authors are partially supported by ARC DP160101755.
}

\del{we study a stochastic version of the classical Patlak--Keller--Segel system
under homogeneous Neumann boundary conditions on an interval $[0,1]$.
Since we are modelling an intrinsic noise,  it is interpreted in the Stratonovich sense.
Given $T>0$, we prove the existence of a martingale solution on $[0,T]$.}

\begin{abstract}
Chemotaxis is a fundamental mechanism of cells and organisms, which is responsible for attracting microbes to food, embryonic cells into developing tissues, or immune cells to infection sites. Mathematically chemotaxis is described by the Patlak--Keller--Segel model. This macroscopic system of equations is derived from the microscopic model when limiting behaviour is studied. However, on taking the limit
and passing from the microscopic equations to the macroscopic equations, fluctuations are neglected. Perturbing the system by a Gaussian random field restitutes the inherent randomness of the system. This gives us the motivation to study the classical Patlak--Keller--Segel system perturbed by random processes.

We study a stochastic version of the classical Patlak--Keller--Segel system
under homogeneous Neumann boundary conditions on an interval $\CO=[0,1]$.
In particular, let $\mathcal{W}_1$, $\mathcal{W}_2$ be two time-homogeneous spatial Wiener
processes over a filtered probability space $\mathfrak{A}$. Let $u$ and $v$ denote the cell density and concentration of the chemical signal.
We investigate the coupled system
\DEQS
 \lk\{ \barray d {u} - \lk( r_u\Delta   u- \chi \Div( u\nabla v)\rk)\, dt =u\circ  d\mathcal W_1, 
\\
d{v} -(r_v \Delta v  -\alpha v)\, dt = \beta u \, dt+ v\circ d\mathcal W_2,
\earray\rk.
\EEQS
with initial conditions $(u(0),v(0))=(u_0,v_0)$.
The positive  terms $r_u$ and $r_v$  are the diffusivity of the  cells and chemoattractant, respectively,
the positive value  
 $\chi$ is the chemotactic sensitivity,
  $\alpha\ge0$ is the so-called damping constant.
  The noise is interpreted in the Stratonovich sense.
Given $T>0$,   we will prove the existence of a martingale solution on $[0,T]$.
\end{abstract}

\maketitle



\textbf{Keywords and phrases:} {Chemotaxis, Keller-Segel model, Stochastic Partial Differential Equations, Stochastic Analysis, Mathematical Biology}

\textbf{AMS subject classification (2002):} {Primary 60H15, 92C17,  35A01;
Secondary 35B65, 35K87, 35K51, 35Q92.}


\section{Introduction}
Chemotaxis is defined as the oriented movement of cells (or an organism) in response to a
chemical gradient. Many sorts of motile cells undergo chemotaxis. For example, bacteria and many
amoeboid cells can move in the direction of a food source. In our bodies, immune cells like
macrophages and neutrophils can move towards invading cells. Other cells, connected with the
immune response and wound healing, are attracted to areas of inflammation by chemical signals.
%

Theoretical and mathematical modeling of chemotaxis dates back to the works of {Patlak  \cite{patlak} in the 1950s}  and Keller and Segel \cite{KSS} in the 1970s.
Let  $A=\Delta$ be the Laplace operator with Neumann boundary conditions (see equation \eqref{eqn:4.3}). The simplest form of the model is
%
\[
\lk\{ \barray
\dot{u}(t)&=&  r_u A u(t) 
-\chi \mbox{div} \big( u(t)\nabla v(t)\big),
\\
\dot{v}(t) &=& r_v A v(t)+\beta u(t) 
-\alpha v(t),
\earray\rk.
\quad t\in \RR_0^+,\phantom{\big|}
\]
with initial conditions $(u(0,x),v(0,x))=(u_0(x),v_0(x))$, where $x\in \CO=[0,1]$.
Here, $u$ denotes the cell density, and $v$ is the concentration of the chemical signal.
The positive terms $r_u$ and $r_v$  are the diffusivity of the cells and chemoattractant, respectively.
The positive constant 
 $\chi$ is the chemotactic sensitivity.
In the signal concentration model,  $\alpha\ge0$ is the so-called damping constant.
For more details we refer to the surveys by Horstmann \cite{horstmann1,horstmann2}, Hillen and
Painter \cite{hillen1}, and  Bellomo et.\ al.\cite{bellomo1}, and the work of Biler \cite{biler2}, or Perthame \cite{perthame}.

This macroscopic system of equations is derived from the limiting behaviour of the microscopic model; see, e.g.\ \cite{stevens1}. Here, one considers the density of the population modelled by  
Fick's laws, in which the reactions are viewed as functions of the density of cells and the concentration of the chemoattractant.
From the microscopic perspective, one interprets the movement of species populations as a result of microscopic irregular movement of single members of the considered population. In the
derivation of the above macroscopic equations, fluctuations around the mean
value are neglected.
For a more realistic model, it is necessary to consider essential features of the natural environment which are non-reproducible. Hence, the model should include random spatio-temporal
forcing.

Another source of randomness is due to the environment or the fluctuation of parameters. Here, the constant $\alpha$ or $\chi$ may fluctuate randomly in time. Biologists (see \cite{noise1} 
or \cite{noise2}) distinguish
between internal (or intrinsic) noise caused by the irregular movement of the cells  and external noise caused by a random environment.
An appropriate mathematical approach to establish more realistic models is the incorporation of randomness in the system.

In this article, we perturb the density of cells $u$ and concentration of chemoattractant $v$ by
time-homogeneous spatial  Wiener processes, i.e.\ an infinite dimensional Wiener process
depending on the time and space variables (For a brief discussion on different  approaches to
infinite dimensional Wiener processes we refer to \cite{dalang}).  The perturbation is
multiplicative, and the integral is interpreted in the Stratonovich sense. As our main result, we show the existence of a martingale solution. We want to emphasise here that there are several obstacles to overcome in the stochastic case, compared to the deterministic one. Due to the nonlinearity in the system,
one has to truncate nonlinear terms and use stopping time arguments. Here, one can only rely on the entities which are controlled by a Lyapunov functional. This  essentially means that in the proof we are not able to work in spaces with high integrability or high regularity.
Hence, we are unable to show the necessary estimates which are required by the Banach Fixed
Point Theorem. Thus we construct an integral operator, and use compactness and a
 stochastic version of the Schauder--Tychonoff Fixed Point Theorem to obtain a solution. More
precisely,
we prove a stochastic version of the Schauder--Tychonoff Fixed Point Theorem which is
specific to our problem.
In this manner, we achieve only the existence of a martingale solution, but not the uniqueness of the solution.

The authors are not aware of any work which treats the simple Keller--Segel model with noise.
We can only find the very recent work \cite{tusheng},  where the deterministic Keller--Segel model is coupled with the stochastic Navier–Stokes equations.

\del{We are interested in the following system
\DEQSZ\label{chemonoise_anfang}
 d {u} & +& \lk( r_u\Delta   u+ \chi \Div( u\nabla v)\rk)\, dt =u \circ d\mathcal W_1, 
\\
d{v}& +&(r_v \Delta v  +\alpha v)\, dt = \beta u \, dt+u\circ  d\mathcal W_2,
\EEQSZ
where  $\CO$ is either a bounded domain with $C^\infty$ boundary, or $\CO=[0,1]$,  $A=\nabla^2$ is the Laplace operator with  Neumann boundary conditions.
The initial conditions are given by $u(0)=u_0$ 
and $v(0)=v_0$. 
}

\section{The Main Result}
%
Let $\CH_1$ and $\CH_2$ be two Hilbert spaces and let $T>0$; later on in Assumption \ref{wiener} we will see that $\CH_1$ and $\CH_2$ are some Bessel potential spaces.
Let $\mathfrak{A}=(\Omega, \CF,\BF,\PP)$ be a complete probability space
and $\BF=(\CF_t)_{t\in[0,T]}$ be a filtration satisfying the usual conditions i.e.,
\begin{itemize}
\item[(i)] $\mathbb{P}$ is complete on $(\Omega, \CF)$,
\item[(ii)] for each $t\geq 0$, $\CF_t$ contains all $(\CF,\mathbb{P})$-null sets and
 \item[(iii)] the filtration $(\CF_t)_{t\in[0,T]}$ is right-continuous.
  \end{itemize}
Let $\mathcal W_1$ and $\mathcal W_2$ be two cylindrical Wiener processes defined over
$\mathfrak{A}$ on Hilbert spaces $\CH_1$ and $\CH_2$, respectively. (This means these Wiener
processes are defined using orthonormal bases of~$\CH_1$ and $\CH_2$.)
Let us define the Laplacian with the Neumann boundary conditions by
\begin{equation}
\label{eqn:4.3} \left\{
\begin{array}{ll}
D(A) &:= \{ u \in H^2(0,1):u_x(0)=u_x(1)=0 \},\cr
A u&:=\Delta u= u_{xx}, \quad u \in D(A),
\end{array}
\right.
\end{equation}
and let us recall that the positive terms $r_u$ and $r_v$  are the diffusivity of the cells and chemoattractant, respectively.
We consider the following system of equations in $\CO=[0,1]$
\DEQSZ\label{chemonoisestrat}
 \lk\{ \barray d {u} - \lk( r_u\DeltaA   u- \chi \Div( u\nabla v)\rk)\, dt =u\circ  d\mathcal W_1,
\\
d{v} -(r_v \DeltaA v  -\alpha v)\, dt = \beta u \, dt+ v\circ d\mathcal W_2,
\earray\rk.
\EEQSZ
with the initial conditions given by $(u(0),v(0))=(u_0,v_0)$. 
The first equation of system \eqref{chemonoisestrat} is nonlinear
in $u$ and $v$, and the second equation is linear in $u$ and $v$.

\medskip

Since we model an intrinsic noise,  the stochastic integral above is interpreted in
the Stratonovich sense. Since the white noise is an approximation of a continuously fluctuating noise with finite memory being much shorter than the dynamical timescales, the representation of the stochastic integral as a Stratonovich stochastic integral is appropriate.
For a detailed explanation of the Stratonovich integral, we refer to the book by Duan and Wang \cite{duan} or to the original work of Stratonovich \cite{stratonovich}.
For completeness we add the definition of strong solutions to system \eqref{chemonoisestrat}. For a Banach space $E$, the space $C_b^{0}([0,T];E)$ is the set of all continuous and bounded functions $u:[0,T]\to E$.
\begin{definition}
We call a pair of processes $(u,v)$ a strong solution to system \eqref{chemonoisestrat} if $u:[0,T] \times \Omega \rightarrow L^2(\CO)$ and $v:[0,T] \times \Omega \rightarrow
H^1(\CO)$
 are $\BF$--progressively  measurable processes\footnote{The process $\xi:[0,T]\times \Omega\to \mathbb{X}$ is said to be progressively measurable over a probability space $(\Omega,\CF,(\CF_t)_{t\in[0,T]},\PP)$ if, for every time $t\in[0,T]$, the map $ [0,t]\times \Omega \to \mathbb {X} $ defined by $(s,\omega )\mapsto \xi_{s}(\omega )$ is $\mathrm {Borel} ([0,t])\otimes {\mathcal {F}}_{t}$-measurable. This implies that $\xi$ is $(\CF_t)_{t\in[0,T]}$-adapted.} such that $\PP$-a.s.\ $(u,v)\in C_b^0([0,T];L^2(\CO)) \times C_b^0([0,T];H^1(\CO))$ and satisfy for all $t\in [0,T]$ and $\PP$--a.s. the integral equation
\begin{align*} 
u(t)
& =e^{t \,r_u\DeltaA} u_0
-\chi\int_0^ t e^{(t-s)r_u\DeltaA}  \Div( u (s)\,\nabla v(s)) \, ds
%
+\int_0^ t e^{(t-s)r_u\DeltaA} u(s) \circ d\mathcal W_1(s),\\
v(t)
& =e^{t(r_v\DeltaA-\alpha I)} v_0
+\beta \int_0^ t e^{(t-s)(r_v\DeltaA-\alpha I)} u(s)\, ds + 
\int_0^ t e^{(t-s)(r_v \DeltaA-\alpha I)} v(s)\circ  d\mathcal W_2(s).
\end{align*}
\end{definition}
%
Since we use compactness methods to show the existence of the solution,
the given probability space will be lost, and we have to construct another probability space.
In this way, we will obtain a martingale solution, which is also known as a weak solution in the probabilistic sense.
%
%
\begin{definition}\label{Def:mart-sol}
A {\sl martingale solution}   to the problem
\eqref{chemonoisestrat} is a tuple
\(
\left(\mathfrak{A}, 
(\mathcal W_1,\mathcal W_2), \,(u,v)\right)
\)
such that
\begin{enumerate}[label={  (\roman*)}]
\item  $\mathfrak{A}=(\Omega ,{{\mathcal{F}}},{\mathbb{F}},\mathbb{P})$ is a complete filtered
probability space with a filtration ${\mathbb{F}}=(\mathcal{F}_t)_{t\in[0,T]}$ satisfying the
usual conditions,
\item $(\mathcal W_1,\mathcal W_2)$  is a pair of cylindrical Wiener processes over $\mathfrak{A}$ on some given Hilbert space $\CH_1\times \CH_2$,
\item $u:[0,T]\times \Omega \to L^2(\CO)$ and $v:[0,T]\times \Omega \to H^1(\CO)$   are  two ${\mathbb{F}}$-progressively
measurable processes being a strong solution to \eqref{chemonoisestrat} over $\mathfrak{A}$.
\end{enumerate}
\end{definition}

Let us consider the complete orthonormal system of the underlying Lebesgue space~$L^2(\CO)$  given by the trigonometric functions (see \cite[p.\ 352]{Garrett1989}) 
\begin{equation}\label{ONS}
\psi_k(x)=
\begin{cases}
{\sqrt{2}} \, \sin\big(2\pi{k} x\big) &\!\!\text{if } k\geq 1,\,x\in\CO, \\
{1} &\!\!\text{if } k = 0,\, x\in\CO\,, \\
{\sqrt{2}}\, \cos\big(2 \pi{k} x\big) &\!\!\text{if } k \leq - 1, x\in\CO.
\end{cases}
\end{equation}
For the proof of the existence of the solution, the Wiener
perturbation needs to satisfy regularity assumptions given in the next hypotheses.
\begin{hypo}\label{wiener}
Let $\CH_1$ and $\CH_2$ be two Bessel potential spaces such that
the embeddings  $\iota_1:\CH_1\hookrightarrow L^2(\CO)$ and $\iota_2:\CH_2\hookrightarrow
H^1(\CO)$ are  Hilbert--Schmidt operators. The Wiener processes $\mathcal W_1$ and $\mathcal W_2$ are two cylindrical processes on  $\CH_1$ and $\CH_2$.
 \end{hypo}

\begin{remark}\label{rem wiener}
%
As an example we can take $\CH_1=H^{\delta_1}(\CO)$ for $\delta_1>1$, and
$\CH_2=H^{\delta_2}(\CO)$ for $\delta_2>2$ where $H^{\delta_i}(\CO)$, $i=1,2$, is to be
defined in Notation~\ref{Hdelta}. The Wiener processes $\mathcal W_1$ and $\mathcal W_2$  are
then given by
\[
\mathcal W_1(t,x)=\sum_{k\in\mathbb{Z}}
\psi^{(\delta_1)} _k(x)\beta^{(1)}_k(t) \quad\text{and}\quad
\mathcal W_2(t,x)=\sum_{k\in\mathbb{Z}}
\psi^{(\delta_2)} _k(x)\beta^{(2)}_k(t)
\]
where $\psi_k^{(\delta_i)}:=(1+({2\pi k})^2)^{-\delta_i/2}\psi_k$, $i=1,2$, with $\psi_k$ defined by
\eqref{ONS}, and where~$\{\beta^{(i)}_k:k\in\mathbb{Z}\}$, $i=1,2,$ are two families of mutually
independent identical distributed standard Brownian motions. We note
that~$\{\psi^{(\delta_i)}_k:k\in\mathbb{Z}\}$ forms an  orthonormal system in
$H^{\delta_i}(\CO)$, $i=1,2$. We will also write
$$
\mathcal W_i(t,x)=\sum_{k\in\mathbb{Z}} \lambda^i_k
\psi _k(x)\beta^{(i)}_k(t),\quad i=1,2,
$$
where, for~$i=1,2$, the sequence $\{\lambda^i_k:k\in\mathbb{Z}\}$, $\lambda^i_k:=(1+{(2\pi
k)^2})^{-\delta_i/2}$, is non-negative with $\lambda^i_k=\lambda^i_{-k}$ for
all $k\in\ZZ$.
For the later course of analysis, for each $i=1,2$, we define
$\gamma_i:=\sum_{k\in\ZZ}\,(\lambda^i_k)^2$.
\end{remark}

Since the solution $(u,v)$ represents the cell density and concentration of the chemical signal,
$u$ and $v$ are non-negative. This immediately implies that the initial conditions $u_0$ and
$v_0$ are non-negative. Besides, one needs to impose some more regularity assumptions on $u_0$
and $v_0.$ 
\begin{hypo}\label{init}
Let $u_0\in L^ 2 (\CO)$ and $v_0\in H^ 1 (\CO)$ be two random variables over $\mathfrak{A}$ such that
\begin{enumerate}[label=(\alph*)]
  \item $u_0\ge 0$ and $v_0\ge 0$;
  \item $(u_0,v_0)$ is  $\CF_0$--measurable;
  \item  $\EE\left[|u_0|_{L^2}^2\right]<\infty$ and $\EE\left[|\nabla v_0|_{L^2}^2\right]<\infty$.
\end{enumerate}
\end{hypo}

\begin{notation}\label{Hdelta}
Let $A_1$ be the positive Laplace operator~$-\Delta$ (as an operator defined on~$L^2(\RR)$)
restricted to functions defined on~$\CO$, namely,
\[
A_1 :=-\Delta, \quad
D(A_1):= H^2(\CO)\cap H^1_0(\CO),
\]
and let $(\psi_j,\rho_j)$ be the eigenfunctions and eigenvalues of $A_1$.
The  Bessel potential space $H^\kappa(\CO)$ is defined by
\[
H^\kappa(\CO)=\{u=\sum_{j}a_j \psi_j \in L^2(\CO):\|u\|_{H^{\kappa}(\CO)}
=\Big(\sum_{j}a_j^{2} \rho_j^{2\kappa}\Big)^{1/2}<\infty.
\}.
\]
\end{notation}
\begin{notation}
Let us  define by
$\LLa(\CO)$ the Zygmund space (see  \cite[Definition 6.1, p.\ 243]{bennet}) consisting of all Lebesgue-measurable functions $f:\CO\to \RR$ for which $\int_\CO|f(x)|\log(|f(x)|)^+\, dx<\infty$. This space is equipped with the norm
\[|f|_{\LLa}:= \int_0^1 f^{\ast}(t) \log(\dfrac{1}{t})dt,\]
where $f^{\ast}$ is defined by
\[ f^{\ast}(t)=\inf\{\lambda: \delta_f(\lambda)\leq t  \}, \quad t \geq 0.
\]
Here $\delta_f(\lambda)$ is the Lebesgue measure of the set $\{x \in \CO: |f(x)| > \lambda \}, \,\, \lambda \geq 0.$ We note that $f \in \LLa$ iff $\int_{\CO} |f(x)|\log (2+|f(x)|)\,dx < \infty$ (see p. 252 of \cite{bennet}). Note also that by Theorem 6.5 p.\ 247 in \cite{bennet} we have that $\LLa(\CO) \hookrightarrow L^1(\CO)$.
\end{notation}
We now present the main result of the paper.
\begin{theorem} \label{main.thm}
Let Assumption \ref{wiener} be satisfied. 
Then for all initial conditions $(u_0,v_0)\in L^2(\CO)\times H^1(\CO)$ satisfying Assumption
\ref{init} and for all $T>0$, there exists a martingale solution~$(\mathfrak{A},(\mathcal
W_1,\mathcal W_2),(u,v))$ to the system \eqref{chemonoisestrat}. Moreover, there exists a
positive constant~$C=C(T, u_0,v_0)$ such that
\DEQSZ\label{eee}
\EE \Big[\sup_{0\le s\le T}|u(s)|_{L^1}\Big]+\EE \Big[\sup_{0\le s\le T}|\nabla v (s)|_{L^2}^2\Big]+\EE\Big[\int_0^T  |\nabla v(s)|_{H^1}^2\, ds \Big]\le C.
\EEQSZ
If in addition $\EE\big[|u_0|_{\LLn}^p\big]<\infty$ and $
\EE\big[|\nabla v_0|_{L^2}^{2p}\big]<\infty$ for~$p\ge1$, then there exists a constant
$C=C(p,T, (u_0,v_0))>0$ such that
\DEQSZ\label{eeep}
\EE \Big[\sup_{0\le s\le T}|u(s)|^p_{L^1}\Big]
+\EE \Big[\sup_{0\le s\le T}|\nabla v(s) |_{L^2}^{2p}\Big]+\EE\Big( \int_0^T |\nabla v(s)|_{H^1}^2\, ds\Big)^p\le C.
\EEQSZ
\end{theorem}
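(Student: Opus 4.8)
The plan is a four-step scheme. \emph{First}, truncate the chemotactic nonlinearity: for $R>0$ fix a smooth cut-off $\theta_R\colon[0,\infty)\to[0,1]$ with $\theta_R\equiv1$ on $[0,R]$ and $\theta_R\equiv0$ on $[2R,\infty)$, and replace $\Div(u\nabla v)$ in \eqref{chemonoisestrat} by $\theta_R\big(|\partial_x v(t)|_{L^2(\CO)}\big)\,\Div(u\nabla v)$, a cut-off of a quantity that will be controlled by the Lyapunov functional in Step~3. Write $(\mathcal{S}_R)$ for the resulting system; its $v$-equation stays linear, and with $v$ frozen its $u$-equation is linear in $u$. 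The guiding principle throughout is that the only quantities one can control uniformly are those governed by a free-energy (Lyapunov) functional, so the whole construction must be arranged so as never to require $\sup$-in-time control of $u$ in $L^2(\CO)$, nor any control in a space of higher integrability or higher regularity.

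\emph{Second}, solve $(\mathcal{S}_R)$. Since the fixed-point iteration cannot be closed by a contraction in these low-regularity spaces, I would not use the Banach fixed point theorem; instead I would (a) set up a Galerkin (spectral) approximation of $(\mathcal{S}_R)$ in the orthonormal system $\{\psi_k\}$ of \eqref{ONS}, giving globally solvable finite-dimensional It\^o systems; (b) derive bounds (possibly $R$-dependent here), uniform in the Galerkin index, for $u_n$ in $L^2(\Omega;C([0,T];L^2(\CO)))\cap L^2(\Omega;L^2(0,T;H^1(\CO)))$ and for $v_n$ in $L^2(\Omega;C([0,T];H^1(\CO)))\cap L^2(\Omega;L^2(0,T;H^2(\CO)))$, together with fractional-in-time bounds read off from the mild formulation and the standard estimates for the deterministic and stochastic convolutions (the Hilbert--Schmidt part of Assumption~\ref{wiener} enters here); (c) deduce tightness of the laws on a path space into which these bounds embed compactly (Aubin--Lions--Simon on the Bochner scale, plus a Jakubowski/Skorokhod representation); (d) pass to the limit on the new probability space, the cut-off ensuring continuity of $\Div(u\nabla v)$ along the a.s.\ convergent subsequence, and identify the limiting Wiener processes through the martingale characterisation of the quadratic variation. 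The coupling $u\leftrightarrow v$ is organised by viewing the construction as a fixed point of the solution operator $(u,v)\mapsto(\widetilde u,\widetilde v)$ on a compact convex set of laws and invoking a version of the Schauder--Tychonoff theorem tailored to this stochastic problem; this produces a martingale solution of $(\mathcal{S}_R)$.

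\emph{Third}, derive a priori estimates uniform in $R$, It\^o's formula being justified through the Galerkin approximation. Integrating the $u$-equation over $\CO$ (both boundary terms vanish by the Neumann conditions) and converting $u\circ d\mathcal W_1$ to It\^o form (the correction is $\tfrac12 c_1(x)\,u\,dt$ with $c_1(x)=\sum_{k\in\ZZ}(\lambda^1_k)^2\psi_k(x)^2\le2\gamma_1$), the mass $M(t):=|u(t)|_{L^1}=\int_\CO u(t)\,dx\ge0$ satisfies
\[
dM=\tfrac12\int_\CO c_1\,u\,dx\,dt+\int_\CO u\,d\mathcal W_1\le\gamma_1\,M\,dt+\int_\CO u\,d\mathcal W_1 ;
\]
since the martingale part has quadratic variation $\le2\gamma_1 M^2\,dt$, Burkholder--Davis--Gundy and Gronwall give $\EE[\sup_{[0,T]}M^p]\le C(p,T)\big(1+\EE M(0)^p\big)$ for every $p\ge1$, with $C$ independent of $R$. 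Next apply It\^o's formula to the Lyapunov functional
\[
\Phi(u,v):=\int_\CO u\log u\,dx+\frac{\chi}{2\beta}\int_\CO|\partial_x v|^2\,dx .
\]
Its drift reproduces the deterministic dissipation, namely $-r_u\int_\CO u^{-1}|\partial_x u|^2-\tfrac{\chi r_v}{\beta}\int_\CO|\partial_{xx}v|^2-\tfrac{\chi\alpha}{\beta}\int_\CO|\partial_x v|^2$, together with a cross term proportional to $\chi\int_\CO\partial_x u\,\partial_x v=-\chi\int_\CO u\,\partial_{xx}v$, which is absorbed into the two dissipation integrals by Young's inequality combined with the one-dimensional Gagliardo--Nirenberg estimate $|u|_{L^2}^2=|\sqrt u|_{L^4}^4\lesssim|u|_{L^1}^2+|u|_{L^1}^{3/2}|\partial_x\sqrt u|_{L^2}$ and the $L^1$-bound on $u$ just obtained. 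The Stratonovich--It\^o corrections and the second-order It\^o terms are bounded by $C\,\Phi(u,v)$ plus mass-controlled lower-order terms (here one uses $\sum_k(\lambda^i_k)^2\psi_k^2\lesssim\gamma_i$ for $i=1,2$ and $\sum_k(\lambda^i_k)^2|\psi_k'|^2<\infty$ for $i=2$, the latter requiring the higher regularity of $\CH_2$ in Assumption~\ref{wiener}), and are then absorbed by Gronwall, while the stochastic integrals are handled by Burkholder--Davis--Gundy. Since $-e^{-1}|\CO|\le\int_\CO u\log u\,dx$ always and $\EE\big[\int_\CO u_0\log u_0\,dx\big]\le\EE|u_0|_{L^2}^2<\infty$ under Assumption~\ref{init}, and since the dissipation integrals control $\int_0^T|\nabla v|_{H^1}^2\,ds$ up to $T\sup_{[0,T]}|\nabla v|_{L^2}^2$, this yields \eqref{eee} with $C$ independent of $R$; raising $M$ and $\Phi$ to the $p$-th power and repeating the argument, now using $\EE|u_0|_{\LLn}^p<\infty$ and $\EE|\nabla v_0|_{L^2}^{2p}<\infty$, gives \eqref{eeep}.

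\emph{Fourth}, remove the truncation. Set $\tau_R:=\inf\{t\in[0,T]:|\partial_x v(t)|_{L^2(\CO)}\ge R\}\wedge T$; on $[0,\tau_R]$ a solution of $(\mathcal{S}_R)$ solves \eqref{chemonoisestrat}, and by the $R$-uniform estimate \eqref{eee} together with Chebyshev's inequality, $\PP(\tau_R<T)\le R^{-2}\,\EE\big[\sup_{[0,T]}|\partial_x v|_{L^2}^2\big]\le C R^{-2}\to0$ as $R\to\infty$. A final tightness/Skorokhod passage $R\to\infty$ (or a localisation argument carried out at the level of martingale solutions) then produces a martingale solution $\big(\mathfrak{A},(\mathcal W_1,\mathcal W_2),(u,v)\big)$ of \eqref{chemonoisestrat} on $[0,T]$, and \eqref{eee}--\eqref{eeep} survive the limit by lower semicontinuity. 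I expect the main obstacle to be Step~3 intertwined with Step~2: because all uniform information lives at the free-energy level, $u$ is known only in $C([0,T];L^2(\CO))$ with $\sqrt u\in L^2(0,T;H^1(\CO))$ and with no $\omega$-integrable bound on $\sup_t|u(t)|_{L^2}$, yet $\Div(u\nabla v)$ must both be given a meaning and be shown to be stable under the compactness passage; controlling the cross term and the Stratonovich corrections within this restricted regularity, extracting just enough compactness for $u$ to pass to the limit in the nonlinearity, and proving the bespoke stochastic Schauder--Tychonoff theorem that the coupled construction requires, are the delicate points.
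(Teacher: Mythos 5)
Your overall architecture --- truncate the nonlinearity, solve the truncated system by compactness and a stochastic Schauder--Tychonoff argument, derive uniform bounds from a free-energy functional, then localise --- is the same as the paper's, and two of your implementation choices are legitimate alternatives: a Galerkin scheme instead of the paper's mild-formulation iteration of the operator $\MT_n$, and the simpler functional $\int u\log u+\tfrac{\chi}{2\beta}|\nabla v|^2$ with the cross term absorbed via the one-dimensional Gagliardo--Nirenberg inequality, instead of the paper's $W(u,v)=\int(u\log u-\rho uv)\,dx+C_1|\nabla v|^2_{L^2}+C_2|v|^2_{L^2}$, whose $-\rho uv$ part is what generates the $\rho\beta|u|^2_{L^2}$ dissipation used to absorb $\langle u,\Delta v\rangle$. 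However, there are three genuine gaps. First, truncating only $\theta_R(|\partial_x v(t)|_{L^2})$ is not enough. The estimates that make the truncated system solvable (and that justify It\^o's formula for the free energy) require \emph{pathwise} control of $\Div(u\nabla v)$: the $L^2$-energy identity for $u$ produces $\tfrac12\langle u^2,\Delta v\rangle$, which needs $\int_0^t|\nabla v(s)|^2_{H^1}\,ds$ bounded $\omega$-by-$\omega$, and the mild-formulation bound \eqref{spimp} additionally uses $\sup_s|u(s)|_{L^1}$. This is why the paper cuts off the three running functionals $h^1,h^2,h^3$ of \eqref{h12.def} (suprema and a time integral, not instantaneous values). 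With your single cut-off the bounds claimed in your Step 2(b) for $u$ in $L^2(\Omega;C([0,T];L^2(\CO)))$ do not close, even with $R$-dependent constants. Second, you use $M(t)=|u(t)|_{L^1}=\int_\CO u\,dx$ and $\int_\CO u\log u\,dx$ without proving $u\ge0$; non-negativity is not automatic for the truncated stochastic system, and the paper devotes its Step II (Claim~\ref{claim nonneg}) to it.

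Third, the removal of the truncation. Since only martingale solutions (without uniqueness) are available, the solutions of $(\mathcal S_R)$ for different $R$ live on different probability spaces and need not be consistent on $[0,\tau_R]$, so the bound $\PP(\tau_R<T)\le CR^{-2}\to0$ does not by itself produce a single process on a single space solving \eqref{chemonoisestrat} a.s.\ on all of $[0,T]$; and the alternative you mention, a tightness/Skorokhod passage $R\to\infty$, would require identifying the limit of $\Div(u_R\nabla v_R)$ with only $\sup_t|u_R(t)|_{L^1}$ under uniform control --- precisely the compactness one does not have. The paper resolves this with a concatenation construction (its Step III): on product probability spaces it builds processes $(\bar u_n,\bar v_n)$ that follow the chemotaxis dynamics up to the cumulative stopping time $\bar\tau_n=\tau_1^\ast+\dots+\tau_n^\ast$ and a linear heat equation with noise afterwards, so that $(\bar u_{n+1},\bar v_{n+1})$ extends $(\bar u_n,\bar v_n)$ by construction and the solution is a.s.\ defined on $\bigcup_n\{\bar\tau_n\ge T\}$, a set of full measure by the uniform Lyapunov bounds. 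Some device of this kind is needed to complete your Step 4.
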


The proof of this main result is presented in the next section.
As a consequence of the theorem, we obtain the following corollary on regularity, the
proof of which is moved to Appendix~\ref{appb}.

\begin{corollary}\label{cor.main}
Let $p>4$ and let $\beta,\delta>0$ be such that $\beta+\delta<\nicefrac{1}{2}-\nicefrac{1}{p}$ and $\varrho=\nicefrac 23$. Let  Assumption \ref{wiener} be satified.
Then, if $(u_0,v_0) \in L^2(\CO)\times H^1(\CO)$ satisfies  Assumption  \ref{init} along with
$\EE\left[|u_0|_{\LLn}^p\right]<\infty$ and $ \EE\left[|\nabla v_0|_{L^2}^{2p}\right]<\infty$,
then the martingale solution $(u,v)$ to the system \eqref{chemonoisestrat} satisfies
$$
\EE\,\|u-e^{(\cdot)(r_u\DeltaA +\gamma I) }u_0\|^{\varrho p}_{C^{\beta}_b([0,T];H^{2\delta})}<\infty
\quad \hbox{and} \quad
\EE\,\|\nabla v-e^{(\cdot)(r_v\DeltaA -\alpha I) }\nabla v_0\|^{ p}_{C^{\beta}_b([0,T];H^{2\delta})}<\infty.
$$
\end{corollary}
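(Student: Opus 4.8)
The plan is to read the claimed regularity off the mild formulation of the Definition preceding Definition~\ref{Def:mart-sol}, after first converting the Stratonovich integrals to It\^o integrals. Because the $\mathcal W_i$ are \emph{time-homogeneous spatial} processes, the translation invariance of their covariances makes the Stratonovich corrector a constant: with $\mathcal W_i=\sum_{k\in\ZZ}\lambda^i_k\psi_k\beta^{(i)}_k$ as in Remark~\ref{rem wiener}, the relations $\lambda^i_k=\lambda^i_{-k}$, $\psi_0^2\equiv1$ and $\psi_k^2+\psi_{-k}^2\equiv2$ ($k\ge1$) give $\sum_{k\in\ZZ}(\lambda^i_k)^2\psi_k^2\equiv\gamma_i$ on $\CO$, so $u\circ d\mathcal W_1=u\,d\mathcal W_1+\tfrac12\gamma_1u\,dt$ and likewise for $v$. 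Hence, with $B_u:=r_u\DeltaA+\gamma I$ ($\gamma=\tfrac12\gamma_1$) and $B_v:=r_v\DeltaA-\alpha I$, the mild equations read $u-e^{(\cdot)B_u}u_0=-\chi\,\Phi+\Psi$ and $v-e^{(\cdot)B_v}v_0=\beta\,\Theta+\Xi$, where $\Phi(t)=\int_0^t e^{(t-s)B_u}\Div(u(s)\nabla v(s))\,ds$, $\Psi(t)=\int_0^t e^{(t-s)B_u}u(s)\,d\mathcal W_1(s)$, $\Theta(t)=\int_0^t e^{(t-s)B_v}u(s)\,ds$ and $\Xi(t)=\int_0^t e^{(t-s)B_v}v(s)\,d\mathcal W_2(s)$. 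Since $\nabla\colon H^{1+2\delta}(\CO)\to H^{2\delta}(\CO)$ is bounded and $e^{(\cdot)B_v}\nabla v_0$ differs from $\nabla e^{(\cdot)B_v}v_0$ only by a smooth term controlled by $|v_0|_{H^1}$, it is enough to bound $\Phi,\Psi$ in $C^\beta_b([0,T];H^{2\delta})$ and $\Theta,\Xi$ in $C^\beta_b([0,T];H^{1+2\delta})$ with the stated moments. The only a priori information available is that of Theorem~\ref{main.thm}: \eqref{eeep} bounds $\EE\sup_{s\le T}|u(s)|_{L^1}^p$, $\EE\sup_{s\le T}|\nabla v(s)|_{L^2}^{2p}$ and $\EE(\int_0^T|\nabla v(s)|_{H^1}^2\,ds)^p$, and its proof additionally supplies the entropy-dissipation bound $\sqrt u\in L^\infty(0,T;L^2(\CO))\cap L^2(0,T;H^1(\CO))$ with $p$-th moments governed by $\EE[|u_0|_{\LLn}^p]$. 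One-dimensional Gagliardo--Nirenberg interpolation upgrades this (with moments under control) to $u\in L^4(0,T;L^2(\CO))\cap L^2(0,T;L^\infty(\CO))\cap L^1(0,T;H^1(\CO))$ and $v\in L^\infty(0,T;H^1(\CO))\cap L^2(0,T;H^2(\CO))$ --- precisely the entities ``controlled by a Lyapunov functional'' singled out in the Introduction --- and no bound of higher integrability or regularity on $u$ is at hand; this is why the $u$-estimates below carry only the fraction $\varrho=\tfrac23$ of the $p$-th moment (the intermediate bounds on $u$ come from interpolating $L^\infty(0,T;L^1)$ with $L^2(0,T;L^\infty)$ and a subsequent H\"older inequality in $\Omega$).

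For $\Psi$ and $\Xi$ I use the Da Prato--Kwapie\'n--Zabczyk factorization method. Fix $\alpha$ with $\tfrac1p+\beta+\delta<\alpha<\tfrac12$, an interval that is nonempty precisely because $p>4$ and $\beta+\delta<\tfrac12-\tfrac1p$. Then $\Psi(t)=\tfrac{\sin\pi\alpha}{\pi}\int_0^t(t-s)^{\alpha-1}e^{(t-s)B_u}Y_1(s)\,ds$ with $Y_1(s)=\int_0^s(s-r)^{-\alpha}e^{(s-r)B_u}u(r)\,d\mathcal W_1(r)$, and analogously $\Xi$ with $Y_2(s)=\int_0^s(s-r)^{-\alpha}e^{(s-r)B_v}v(r)\,d\mathcal W_2(r)$. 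The Burkholder--Davis--Gundy inequality, the analytic-semigroup bounds $\|B^\theta e^{\tau B}\|\lesssim\tau^{-\theta}$, and the Hilbert--Schmidt property of $\iota_1,\iota_2$ from Assumption~\ref{wiener} together with the explicit form of the $\mathcal W_i$ (whence the radonifying norm of $h\mapsto u(r)h$ from $\CH_1$ into $L^2(\CO)$ is $\lesssim|u(r)|_{L^2}$, and that of $h\mapsto v(r)h$ from $\CH_2$ into $H^1(\CO)$ is $\lesssim|v(r)|_{H^1}$, because $|\psi_k|\le\sqrt2$ and $\sum_k(\lambda^i_k)^2=\gamma_i<\infty$) yield, using $2\alpha<1$ respectively $2\alpha+2\delta<1$,
\[
\EE\,\|Y_2(s)\|_{H^{1+2\delta}}^{p}\ \lesssim\ \EE\Big(\int_0^s(s-r)^{-2\alpha-2\delta}|v(r)|_{H^1}^2\,dr\Big)^{p/2}\ \lesssim\ \EE\big[\sup_{0\le r\le T}|v(r)|_{H^1}^{p}\big]\ <\ \infty,
\]
and likewise $\EE\int_0^T\|Y_1(s)\|_{L^2}^{\varrho p}\,ds<\infty$ with $\varrho=\tfrac23$ (here $u$ enters only through its $L^1$-norm, after factoring $h\mapsto u(r)h\colon\CH_1\to L^1(\CO)$ through $e^{\tau B_u}\colon L^1\to L^2$, but recovering the time integrability needed against the singular kernel forces the interpolation/H\"older step of the preceding paragraph). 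Finally, the operator $h\mapsto\int_0^\cdot(t-s)^{\alpha-1}e^{(t-s)B}h(s)\,ds$ maps $L^p(0,T;H^{\kappa})$ boundedly into $C^\beta_b([0,T];H^{\kappa})$ as soon as $\alpha>\tfrac1p+\beta$, the extra $\delta$ derivatives missing in the $u$-case being recovered from the smoothing slack encoded in $\tfrac1p+\beta+\delta<\alpha<\tfrac12$. This gives $\EE\|\Xi\|_{C^\beta_b([0,T];H^{1+2\delta})}^{p}<\infty$ and $\EE\|\Psi\|_{C^\beta_b([0,T];H^{2\delta})}^{\varrho p}<\infty$.

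The deterministic convolutions $\Theta$ and $\Phi$ are handled by H\"older-space maximal regularity for analytic semigroups: if $f\in L^a(0,T;H^{\sigma}(\CO))$ with $a>1$ then $\int_0^\cdot e^{(t-s)B}f(s)\,ds\in C^\beta_b([0,T];H^{2\kappa})$ whenever $\beta+(\kappa-\tfrac{\sigma}{2})_+<1-\tfrac1a$. For $\Theta$ one takes $f=u$ (with, e.g., $\sigma=0$, $a=4$, or $\sigma=1$, $a=\tfrac32$, or an interpolate) and reaches $C^\beta_b([0,T];H^{1+2\delta})$; for $\Phi$ one estimates $u\nabla v$ in an appropriate $L^a(0,T;H^{\sigma+1}(\CO))$ via the one-dimensional product rules applied to the bounds of the first paragraph, balancing the choice of $(a,\sigma)$ against the prescribed $(\beta,\delta)$, loses one derivative to the divergence, and applies the displayed estimate. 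Since all data-dependent constants are bounded by $\EE[|u_0|_{\LLn}^p]$ and $\EE[|\nabla v_0|_{L^2}^{2p}]$, taking expectations and combining the paragraphs (using $\varrho p\le p$ and the boundedness of $\nabla\colon H^{1+2\delta}\to H^{2\delta}$) produces both asserted inequalities with $C=C(p,T,(u_0,v_0))$.

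The hard part is the chemotaxis convolution $\Phi$, and to a lesser extent the $\Psi$-estimate: the density $u$ is controlled only in $L^1$-type and entropy-dissipation spaces --- exactly the restriction emphasised in the Introduction --- so the fractional exponents governing the parabolic smoothing barely close, and it is this bookkeeping that pins down the thresholds $p>4$ and $\beta+\delta<\tfrac12-\tfrac1p$ and forces the loss $\varrho=\tfrac23$ for the $u$-component, whereas the $v$-component, being linear and carrying the full bound $v\in L^\infty(0,T;H^1)$, keeps the $p$-th moment. Verifying that the chosen splitting of $u\nabla v$ actually exhausts the whole admissible range of $(\beta,\delta)$, rather than a strictly smaller one, is the delicate point of the argument.
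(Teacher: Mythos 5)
Your skeleton (mild formulation after the It\^o conversion, a Lemma~\ref{L:reg}-type bound for the deterministic convolutions, factorization for the stochastic ones) is the same as the paper's, but the inputs you feed into it are not available, and the decisive estimate is never carried out. The paper proves the corollary using \emph{only} the moment bounds \eqref{eee}--\eqref{eeep} of Theorem~\ref{main.thm}; you instead invoke an entropy-dissipation bound $\nabla\sqrt{u}\in L^2(0,T;L^2(\CO))$ \emph{with $p$-th moments} and its Gagliardo--Nirenberg consequences $u\in L^4(0,T;L^2)\cap L^2(0,T;L^\infty)\cap L^1(0,T;H^1)$. Theorem~\ref{main.thm} asserts nothing of the sort: the dissipation term $4r_u|\nabla\sqrt{\bar u_n}|^2_{L^2}$ appears only inside Claim~\ref{claim esti}, for the stopped, truncated processes and only in first moment, and is never transferred to the limit solution, let alone with $p$-th moments. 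These unproved bounds are load-bearing in your argument: your factorization treatment of $\Psi$ through $e^{\tau B_u}:L^1\to L^2$ costs a factor $(s-r)^{-1/4}$ and hence forces the factorization parameter $\alpha<\nicefrac14$, which cannot reach $\alpha>\nicefrac1p+\beta+\delta$ on the claimed range $\beta+\delta<\nicefrac12-\nicefrac1p$ without the extra integrability of $u$ (you say so yourself), and your bound for $\Theta$ uses $u\in L^4(0,T;L^2)$ outright. Either these a priori estimates must be proved as an additional, nontrivial step, or the argument must be rearranged so that $u$ enters only through $\sup_t|u(t)|_{L^1}$ --- which is exactly what the paper does.

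Moreover, the heart of the corollary --- the chemotaxis convolution $\Phi$ --- is left at ``balancing the choice of $(a,\sigma)$'', and you explicitly flag as unverified whether the exponents close on the whole admissible range; the exponent $\varrho=\nicefrac23$ is likewise asserted rather than derived. The paper settles both points by estimating the nonlinearity in the weakest space compatible with \eqref{eeep}: $|\Div(u\nabla v)|_{H^{2\rho}}\le |u\nabla v|_{L^1}\le |u|_{L^1}\,|\nabla v|_{L^\infty}\le |u|_{L^1}\,|\nabla v|_{L^2}^{1-\theta}|\nabla v|_{H^1}^{\theta}$, then applying Lemma~\ref{L:reg} to $\eta=\Div(u\nabla v)\in L^q(0,T;H^{2\rho})$ with $q\in(2,4)$; the three factors are precisely the three quantities controlled in \eqref{eeep}, and the three-fold H\"older inequality in $\Omega$ is what produces the moment $\varrho p=\nicefrac{2p}{3}$. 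For the $v$-equation the paper bounds the gradient of the convolution of $u$ via $L^1(\CO)\hookrightarrow H^{2\rho+1}(\CO)$, so only $\EE\big[\sup_t|u(t)|_{L^1}^p\big]$ is needed and the full $p$-th moment survives, while the stochastic terms are handled by Corollary~\ref{brz:con} with $u$, $\nabla v$ measured in $H^{-2\rho}$. Without either proving your additional moment bounds or reworking $\Phi$, $\Psi$, $\Theta$ along these lines, the proposal has a genuine gap.
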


Since the proof of Theorem~\ref{main.thm} is quite technical and sophisticated, in the next
section we split it into five main steps.
In Step I, we consider the truncated system and show the existence of solutions by using the
Schauder--Tychonoff Type Fixed Point Theorem. In this way we obtain a family of solutions $\{
(u_n,v_n):n\in\NN\}$ to the truncated system depending on the truncation parameter $n$.
In Step II, we show that the solution of the truncated system is non-negative, provided
the initial conditions are. In Step III, we construct a new pair of processes on the whole
interval $[0,T]$
such that they are a solution of the chemotaxis system up to a stopping time, and satisfy
the heat equation (with lower order terms). Then using an appropriate Lyapunov functional, we
derive in Step IV uniform estimates in the $L^1$-norm of the family of solutions.
In the final step (Step V), we prove the existence of solutions $\PP$-a.s to the original
problem on the whole time interval $[0,T]$.

Due to the fact that we cannot use higher regularity or integrability as given by the
Lyapunov functional, we can only prove existence. However, we assume that one can find local solutions being unique.
%
%

%


\section{Proof of Theorem \ref{main.thm}}\label{proof}

\subsection{Conversion between the It\^o and Stratonovich form of SPDEs} \label{sec.itostrat}
Before starting with the actual proof, we first outline shortly the difference between the Stratonovich integral and the It\^o integral. A real fluctuating forcing term has a finite amplitude and a
finite-timescale. In this way, the so-called  white noise is an idealisation of the delta-function-correlated
noise, and, as mentioned already in the introduction, the white noise can be interpreted as an approximation of a continuously fluctuating noise with finite memory being much shorter than the dynamical timescales.
Wong and Zakai \cite{wongzakai} investigate the convergence of ordinary differential equations (ODEs) which, in particular, involve
piecewise linear approximations of a one-dimensional Wiener process. They
show that the solutions to these ODEs converge almost surely to a solution
of a specific Stratonovitch stochastic differential equation (SDE) and not an
It\^ o SDE. In particular, they show that the proper representation of the white noise as a
stochastic integral in the Stratonovich sense is appropriate. However, a drawback of the Stratonovich
stochastic integral is that one does not have the Burkholder–Davis–Gundy inequality at their
disposal. Hence we use the relation between the Stratonovich and It\^o integral to change the
integral into an It\^o stochastic integral. Therefore, we present the relation between the two stochastic integrals shortly.
For a short discussion from the viewpoint of physicists, we refer to \ \cite{physics2} and
\cite{physics1}.

Let $\mathfrak{A}=(\Omega, \CF,\BF,\PP)$ be a complete probability space
and $\BF=(\CF_t)_{t\in[0,T]}$ be a filtration satisfying the usual conditions.
Let $H$ be a Hilbert space and
\begin{gather*}\label{def.MA.H}
M_{\MA}^2(0,T;H):=\Big\{ \xi:[0,T]\times \Omega\to H \ \Big| \ \xi \text{ is progressively measurable over $\mathfrak{A}$ }
\notag\\
\qquad \qquad \text{ and} \quad  \EE\Big[ \sup_{0\le s\le T}
|\xi(s)|^2_H \Big]<\infty \Big\}.
\end{gather*}
This space is equipped with the norm
\begin{equation}\label{equ:M nor}
\|\xi\|_{M_{\MA}^2} :=
\left( \EE\Big[ \sup_{0\le s\le T} |\xi(s)|^2_H \Big] \right)^{1/2}.
\end{equation}
The cylindrical Wiener process $\mathcal{W}$ is time-homogeneous spatial Wiener processes (see Assumption \ref{wiener} and Remark \ref{rem wiener}) on a given Hilbert space $\CH$ 
which can be written as 
$$
\mathcal{W}(t,x)=\sum_{k\in\mathbb{N}}  \phi_k(x)\beta_k(t), \quad t\in[0,T],
$$
where  $\{\phi_k:k\in\mathbb{Z}\}$ is an orthonormal basis in $\CH$ and  
$\{ \beta_k:k\in\ZZ\}$
is a sequence of
independent standard Brownian motions. %
A stochastic integral can be interpreted in different ways, e.g., as a Stratonovich integral or as an It\^o integral.
For the time being, let us assume that
$\mathcal{W}$ is a cylindrical 
Wiener process on $\CH$, where, for simplicity, we  assume that the embedding
$\CH\hookrightarrow L^2(\CO)$ is a Hilbert--Schmidt operator.
Let $\pi=\{t^\pi_0=0<t^\pi_1<\ldots<t^\pi_{N^\pi}=T\}$ be a partition of $[0,T]$ and let
$|\pi|=\displaystyle\max_{1\le k\le N^\pi}(t^\pi_{k}-t^\pi_{k-1})$.
The Stratonovich integral of a process~$\xi$ is defined by
$$
\int_0^ t \xi(s) \circ d\mathcal{W}(s) := \lim_{|\pi|\to 0}  \sum_{k=1}^{N^\pi}
 \  \xi_k \ \big( \mathcal{W}(t^\pi_k\wedge t)-\mathcal{W}({t^\pi_{k-1}\wedge t}) \big)
$$
with $\xi_k=\xi((t^\pi_k\wedge t+t^\pi_{k-1}\wedge t)/2)$, while
the It\^o integral is defined by
$$
\int_0^ t \xi(s) d \mathcal{W}(s) := \lim_{|\pi|\to 0}  \sum_{k=1}^{N^\pi}  \
\xi_{k} \  \big( \mathcal{W}(t^\pi_k\wedge t)-\mathcal{W}({t^\pi_{k-1}\wedge t}) \big)
$$
with $\xi_k=\xi(t^\pi_{k-1}\wedge t)$.

Let us consider the following stochastic partial differential equation (SPDE) perturbed by  a one-dimensional Wiener process, i.e.\ $\CH=\RR$. The integral is expressed in the sense of the Stratonovich integral, i.e.,
\DEQSZ\label{strat1}
d\xi(t)=A \xi(t)\, dt + \sigma(\xi(t))\circ d \mathcal{W}(t),\quad \xi(0)=\xi_0.
\EEQSZ
Here $A$ is an analytic operator on the Hilbert space $H:=L^2(\CO)$ and $\sigma:H\to
L_{HS}(\CH,H)$ where $L_{\text{HS}}(H_1,H_2)$ denotes the space of linear operators
from~$H_1$ to~$H_2$ equipped with the Hilbert--Schmidt norm.
As mentioned in the introduction, the Stratonovich integral is the natural one for modelling fluctuations.
However, the disadvantage is that the Stratonovich integral is not a martingale; hence, neither
the stochastic It\^o--isometry nor the Burkholder--Davis--Gundy inequality is available.
Nevertheless, one can reformulate~\eqref{strat1} into an SPDE with an
It\^o differential, by adding a correction term. The reason for the correction term is that, in
the definition of the Stratonovitch integral,
the random variable~$\xi_k=\xi((t^\pi_{k-1}+t^\pi_{k})/2)$
and the increment
$\mathcal{W}({t^\pi_{k}})-\mathcal{W}({t^\pi_{k-1}})$ are not independent. The random variable
$\xi_k$ contains some information about the difference
$\mathcal{W}({t^\pi_{k}})-\mathcal{W}({t^\pi_{k-1}})$ because~$\xi$ is evaluated at the midpoint
of the interval. In the case of the It\^o integral one
chooses $\xi_k=\xi(t_{k-1}^\pi)$, the value of $\xi$ at the left endpoint of the interval.
This random variable $\xi_k$ is independent of $\mathcal{W}({t^\pi_{k}})-\mathcal{W}({t^\pi_{k-1}})$.

The correction term can be calculated explicitly  (see  \cite[p.\ 65, Section 4.5.1]{duan}), i.e., the equivalent It\^o equation of \eqref{strat1} is given by
\begin{align}\label{ito eqn noise}
d\xi(t)=A \xi(t)\, dt +\frac 12 D_\xi\sigma(\xi(t))\, \sigma(\xi(t))\, dt + \sigma(\xi(t)) d \mathcal{W}(t),\quad \xi(0)=\xi_0.
\end{align}
Here, $D_{\xi}(\sigma(\xi))$ denotes the Fr\'echet derivative of $\sigma$ with respect to $\xi$.

\medskip

Let us assume that the Wiener process is cylindrical in $H^\delta(\CO)$, $\delta > 1$. Then, the Wiener process is infinite dimensional and  it can be written as a sum of infinitely many scalar Wiener processes, i.e.\
 $\CW(t,x)=\sum_{k\in\ZZ} \psi_k^{(\delta)}(x)\,\beta _k(t)$, where $\{\beta_k:k\in\ZZ\}$ is a
family of independent scalar-valued Wiener processes and 
 $\psi^{(\delta)}_k:=(1+(2\pi k)^2)^{-\delta/2}\psi_k$. Here $\{ \psi_k:k\in\mathbb{Z}\}$  is the
orthonormal basis given in \eqref{ONS}.
  Elementary calculations reveal for $\sigma(\xi)=\xi$
\begin{align}\label{correction term}
 \sum_{k\in\ZZ}  D_\xi \sigma(\xi)[\psi_k^{(\delta)}]\, \sigma(\xi)[\psi_k^{(\delta)}]=\sum_{k\in\ZZ} \xi  \,  \psi^{(\delta)}_k\psi^{(\delta)}_k.
\end{align}
We call $\sigma_k : H \rightarrow L_{HS}(H)$ by $\sigma_k(\xi)=\lambda_k \xi \psi_k$.
Indeed, denoting $(1+(2\pi k)^2)^{-\delta/2}$ by $\lambda_k$ and taking into account the relations $\lambda_k=\lambda_{-k}$, $\psi_k^2 +\psi_{-k}^2 =2$ and $\gamma=\sum_{k\in\ZZ} \lambda^2_k$, we infer
\begin{align*}
\sum_{k\in\ZZ}  D_\xi\sigma_k(\xi)\, \sigma_k(\xi)&=\sum_{k\in\ZZ} \xi  \psi^{(\delta)}_k\psi^{(\delta)}_k
=\sum_{k\in\NN} \lambda^2_k \psi_k\psi_k\xi + \sum_{k\in\NN} \lambda^2_{-k} \psi_{-k}\psi_{-k} \xi +\lambda_0^2 \xi
\notag\\
&
=2 \sum_{k\in\NN} \lambda^2_k \xi+ \lambda_0^2 \xi
= \sum_{k\in\ZZ} \lambda^2_k \xi=\gamma \xi.
\end{align*}

We shortly give the exact form of the Burkholder--Davis--Gundy inequality which will be useful
during the course of analysis. Given a Wiener process $\mathcal{W}$ being cylindrical on $\CH:=H^\delta(\CO)$,
$\delta>1$, over $\MA$, and a progressively measurable process $\xi\in M^2_\MA(0,T;H^\rho(\CO))$,
$\rho\in[0,\frac 12]$,
let us define $\{Y(t):t\in[0,T]\}$ by
 $$
 Y(t):=\int_0^ t \xi(s)\, d \mathcal{W}(s), \quad t\in[0,T].
 $$
Here, for each $t\in[0,T]$, $\xi(t)$ is interpreted as a multiplication operator acting on the
elements of $\CH$, namely, $\xi:\CH\ni\psi \mapsto \xi\psi\in \CS'(\RR)$.
Since for any $\nu>\frac 12$ and for any~$\varphi\in H^\nu(\CO)$
the product~$\xi(t)\varphi$ belongs to~$H^\rho(\CO)$
(see \cite[p.\ 190]{runst}), we can view~$\xi(t)$ as a linear map from~$H^\nu(\CO)$ into~$H^\rho(\CO)$.
It is shown in~\cite[Theorem~1 (4), p.\ 190]{runst} that
\begin{equation}\label{equ:runst}
|\xi(t)\varphi|_{H^\rho}
\le
C
|\xi(t)|_{H^\rho}
|\varphi|_{H^\nu}
\end{equation}
where the constant $C>0$  is independent of~$\varphi$.
Consequently, for any $p\ge 1$, $\delta > 1 $, and any $\rho\in[0,\frac 12]$
\begin{equation*}\label{BDG}
\EE \Big[\sup_{t \in [0, T]} |Y(t)|^p_{H^\rho}\Big] \leq C_p \, \EE \,\Big[ \int_0^T |\xi(t)|^2_{L_\text{HS}(H^{\delta}, H^\rho )}\, dt\Big]^\frac p2,
\end{equation*}
where $|\cdot|_{L_\text{HS}(H^{\delta},H^{\rho})}$ denotes the Hilbert--Schmidt norm from
$H^{\delta}(\CO)$ to $H^{\rho}(\CO)$. 
First, let us note that for $\delta>1$ there exists a $\nu>\frac 12 $ such that the embedding
$H^{\delta}(\CO)\hookrightarrow H^\nu(\CO)$ is a Hilbert--Schmidt operator.
Using the fact that  $\{\psi_k^{(\delta)}:k\in\ZZ\}$ is an orthonormal basis of $H^{\delta}(\CO)$,
and $\psi_k^{(\delta)}=\lambda_k \psi_k$ we obtain, by using~\eqref{equ:runst},
%
\begin{align}\label{e lhs}
|\xi(s)|^2_{L_\text{HS}(H^{\delta}, H^\rho)}
&
=\sum_{k \in \ZZ} \lk|\xi(s)\psi_k^{(\delta)}\rk|^2_{ H^\rho}
\le C\sum_{k \in \ZZ} \lk|\xi(s)\rk|_{H^\rho}^2\lk|\psi_k^{(\delta)}\rk|^2_{H^{\nu}}
=
C |\xi(s)|^2_{ H^\rho}  \sum_{k \in \ZZ}|\psi_k^{(\delta)}|^2_{ H^\nu}
.
\end{align}
If the embedding $H^{\delta}(\CO) \hookrightarrow H^{\rho}(\CO)$ is supposed to be  a Hilbert--Schmidt,
the right hand side of \eqref{e lhs} is finite
and we obtain
\begin{equation}\label{HSnorm}
\EE \Big[\sup_{t \in [0, T]} |Y(t)|^p_{H^\rho}\Big] \leq C \, \EE \,\Big[ \int_0^T
|\xi(t)|^2_{H^\rho}\, dt \Big]^\frac p2.
\end{equation}
In case $\rho\ge \frac12$, we use the fact that $H^\rho(\CO)$ is an algebra. Therefore,  we obtain
\begin{align}\label{eq lhs fin}
|\xi(s)|^2_{L_\text{HS}(H^{\delta}, H^\rho)}
&
=
\sum_{k \in \ZZ} \lk|\xi(s)\psi_k^{(\delta)}\rk|^2_{ H^\rho}
=|\xi(s)|^2_{ H^\rho} \sum_{k \in \ZZ} \lk|\psi_k^{(\delta)}\rk|^2_{ H^\rho}.
\end{align}
If the embedding  $H^\delta(\CO)\hookrightarrow H^{\rho}(\CO)$ is a Hilbert--Schmidt operator,
then the right hand side of \eqref{eq lhs fin} is finite
and we obtain for $\delta>\rho+1$
\begin{equation}\label{HSnorm.1}
\EE \Big[\sup_{t \in [0, T]} |Y(t)|^p_{H^\rho}\Big] \leq C \, \EE \,\Big[ \int_0^T
|\xi(t)|^2_{H^\rho}\, dt \Big]^\frac p2.
\end{equation}
For further details we refer to the linear noise (see \cite[Example 2.1.2, p.\ 22]{roeckner}).
After this short preparatory step we now start the proof.

\subsection{Proof of Main Result}
\begin{proof}[Proof of Theorem \ref{main.thm}]
%
We show the existence of  a solution to the system written in the It\^o form, i.e., to
\begin{align}\label{chemonoise.1}
 \lk\{ \barray d {u}  - \lk( r_u\DeltaA   u- \chi \Div( u\nabla v)  +\gamma u\rk)\, dt =u \, d\mathcal W_1, 
\\
d{v} -(r_v \DeltaA v  -(\alpha-\gamma) v)\, dt = \beta u \, dt+ v \,d\mathcal W_2. \earray\rk. 
\end{align}
Here, the stochastic differential is interpreted as the It\^o stochastic differentials. Adding the correction term calculated in \eqref{correction term} we see that the equation \eqref{chemonoise.1} is equivalent to \eqref{chemonoisestrat}, where the stochastic integral is interpreted as the Stratonovich integral. The advantage of the It\^o stochastic integral is, that the It\^o integral is a local martingale and we can apply the Burkholder--Davis--Gundy inequality. For simplicity we write $\alpha$ as $\alpha-\gamma$.
In this way we end up with the following system
\begin{align}\label{chemonoise}
 \lk\{ \barray d {u}  - \lk( r_u\DeltaA   u- \chi \Div( u\nabla v)  +\gamma u\rk)\, dt =u \, d\mathcal W_1, 
\\
d{v} -(r_v \DeltaA v  -\alpha v)\, dt = \beta u \, dt+ v \,d\mathcal W_2, \earray\rk. 
\end{align}

As mentioned before, we  have at our disposal the Burkholder--Davis--Gundy maximal inequality. By the definition of $\gamma$ and the remark above (see Section \ref{sec.itostrat}) it follows that the systems \eqref{chemonoisestrat} and \eqref{chemonoise} are equivalent.

The proof of Theorem \ref{main.thm} is carried out in several steps. First, we consider the
truncated system and show that the solution of the truncated system exists depending on the
truncation parameter $n$. The immediate next step is to achieve non-negativity of the solution,
provided the initial conditions are. Then using appropriate Lyapunov functional, we obtain a
uniform estimate in the $L^1$ norm of the family of solutions for the truncated system, by which
we obtain the existence of a global solution.

\begin{steps1}
\item
 {\bf The truncated system:}
Let $\psi \in C_c^{\infty}(\mathbb{R})$ be a cut-off function which satisfies
$$
\psi(x):= \bcase 0 &\mbox{ if } |x|\ge 2,
\\ \in [0,1] &\mbox{ if } |x|\in(1,2),
\\1 &\mbox{ if } |x|\le 1,
\ecase
$$
and let $\psi_n(x):= \psi(x/n)$, $x\in\RR,\,n \in \mathbb{N}$.
In addition, for any progressively measurable process $\eta_1$ and $\eta_2$ over $\mathfrak{A}$ belonging to $\CM_\MA ^2(0,T;L^1(\CO))$ and $\CM_\MA ^2(0,T;H^1(\CO))$ respectively,
let us define for $t \in [0,T]$
\DEQSZ \label{h12.def}
\\\nonumber
h^1(\eta_1,t ):= \sup_{0\le s\le t} |\eta_1(s)|_{L^1},\,
 \, h^2(\eta_2,t ):= \sup_{0\le s\le t}  |\nabla \eta_2(s)|_{L^2}, \,
\mbox{ and }\,  h^3(\eta_2,t):=\int_0^t  |\nabla \eta_2(s)|_{H^1}^2\, ds.
\EEQSZ
Let us consider the truncated system given by
\begin{equation}\label{ppp1.n}
\begin{cases}
& du_n(t) -
\Big[ r_u \DeltaA u_n(t) -\chi \psi_n(h^1(u_n,t))\,\psi_n(h^2(v_n,t))\,\psi_n(h^3(v_n,t))
\\
&\hspace{1.1cm}
\times \Div( u_n(t)\,\nabla v_n(t))+\gamma u_n(t)\Big] dt
=  u_n(t) d \mathcal W_1(t),
 \\
& dv_n(t) - \Big[r_v \DeltaA v_n(t)-\alpha v_n(t)\Big]\,dt =\beta u_n(t)dt+
 v_n(t)d\mathcal W_2(t),
 \\ 
& (u_n(0),v_n(0)) = (u_0,v_0).
\end{cases}
\end{equation}
First we show in the following lemma that this truncated system has a {martingale} solution
$(u_n,v_n)$ (in the sense of Definition \ref{Def:mart-sol}) belonging $\PP$-a.s.\ to
$C_b^0([0,T];L^2(\CO))\times C^0_b([0,T];H^1(\CO))$.
%
%
%
%
\begin{lemma}\label{lem.exist.n}
Let us assume that Assumption \ref{wiener} is satisfied.
For each $n \in \mathbb{N}$, and any initial condition $u_0,v_0$ satisfying Assumption \ref{init},  there exists a martingale solution $(\mathfrak{A}_n, (\mathcal{W}^n_1,\mathcal{W}^n_2),\\(u_n,v_n))$, where {$\mathfrak{A}_n=(\Omega_n,\CF_n,(\CF_t^n)_{t\in[0,T]}, \PP_n)$} is a filtered probability space and $(u_n,v_n)$ is a strong solution  to the system \eqref{ppp1.n} such that $(u_n,v_n)$  belongs $\PP_n$--a.s.\ to
$C_b^0([0,T];L^2(\CO))\times C^0_b([0,T];H^1(\CO))$. In addition, for all $n\in\NN$, there exists a constant $C=C(n)>0$ such that
\[
\EE^n \Big[ \sup_{0\le s\le T} |u_n(s)|_{L^2}^2\Big]
+
\EE^n \Big[ \sup_{0\le s\le T} |\nabla v_n(s)|_{L^2}^2 \Big]
\le C(n)\phantom{\Bigg|}.
\]
\end{lemma}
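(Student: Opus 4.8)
The plan is to realise $(u_n,v_n)$ as a fixed point of an integral operator obtained by decoupling the truncated system \eqref{ppp1.n}, and then to invoke the stochastic version of the Schauder--Tychonoff theorem. Fix $n\in\NN$. Given an $\BF$--progressively measurable input $\bar u\in \CM^2_\MA(0,T;L^2(\CO))$, I would first solve the \emph{linear} SPDE $d\bar v-(r_v\DeltaA\bar v-\alpha\bar v)\,dt=\beta\bar u\,dt+\bar v\,d\CW_2$, $\bar v(0)=v_0$; by the semigroup/maximal--regularity theory for linear SPDEs together with the Hilbert--Schmidt property of $\iota_2$ in Assumption \ref{wiener} (which is exactly what is needed for the stochastic convolution $\int_0^{\cdot}e^{(\cdot-s)(r_v\DeltaA-\alpha I)}\bar v\,d\CW_2$ to live in $C^0_b([0,T];H^1(\CO))$), this has a unique solution $\bar v=\bar v[\bar u]\in C^0_b([0,T];H^1(\CO))\cap L^2(0,T;H^2(\CO))$ $\PP$--a.s., with second moments controlled by those of $v_0$ and of $\bar u$. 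The coefficient $b_n(t):=\chi\,\psi_n(h^1(\bar u,t))\,\psi_n(h^2(\bar v[\bar u],t))\,\psi_n(h^3(\bar v[\bar u],t))$ (notation of \eqref{h12.def}) is then a real-valued, progressively measurable process with $|b_n|\le\chi$, so the second step $du-(r_u\DeltaA u-b_n(t)\Div(u\,\nabla\bar v[\bar u])+\gamma u)\,dt=u\,d\CW_1$, $u(0)=u_0$, is again \emph{linear} in $u$; its drift makes sense in the mild formulation because $u\,\nabla\bar v[\bar u]\in L^2(0,T;L^2(\CO))$ (using $H^2(\CO)\hookrightarrow W^{1,\infty}(\CO)$ in dimension one) and $e^{t r_u\DeltaA}\Div$ gains half a spatial derivative, while the stochastic convolution $\int_0^{\cdot}e^{(\cdot-s)r_u\DeltaA}u\,d\CW_1$ lies in $C^0_b([0,T];L^2(\CO))$ $\PP$--a.s.\ by the factorisation method (again Assumption \ref{wiener}, via $\iota_1$). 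Write $u=\mathcal K(\bar u)$ for this mild solution; a fixed point $\bar u=\mathcal K(\bar u)$ together with $v_n:=\bar v[\bar u]$ is precisely a solution of \eqref{ppp1.n}, with the asserted pathwise regularity built in.

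\textbf{The fixed point.} I would check the hypotheses of the stochastic Schauder--Tychonoff theorem for $\mathcal K$ on (a ball of) $\CM^2_\MA(0,T;L^2(\CO))$. \emph{Invariance:} the It\^o estimate described below shows $\mathcal K$ maps the ball $\{\EE\sup_{[0,T]}|\cdot|_{L^2}^2\le R(n)\}$ into itself for $R(n)$ large. \emph{Compactness/tightness:} from the Duhamel representation the deterministic convolution part of $\mathcal K(\bar u)$ lies in $L^2(0,T;H^1(\CO))\cap W^{1,2}(0,T;H^{-1}(\CO))$, and the stochastic convolution additionally gains a little fractional Sobolev regularity in space and H\"older regularity in time; by an Aubin--Lions type argument the image of the ball is relatively compact in $L^2(0,T;L^2(\CO))$ and its laws are tight on the relevant path space. \emph{Continuity:} $\bar u\mapsto\bar v[\bar u]$ is affine and continuous, the functionals $h^2,h^3$ are continuous in the sup/integral norms, $\psi_n$ is smooth, hence $\bar u\mapsto b_n$ is continuous, and the mild solution map of the linear $u$--equation depends continuously on its data, so $\mathcal K$ is continuous. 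The stochastic Schauder--Tychonoff theorem then produces a fixed point --- in general only after passing to a new filtered probability space $\mathfrak A_n$ carrying new cylindrical Wiener processes $(\CW_1^n,\CW_2^n)$, which is why one obtains a \emph{martingale} solution $(\mathfrak A_n,(\CW_1^n,\CW_2^n),(u_n,v_n))$ of \eqref{ppp1.n}.

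\textbf{The bound $C(n)$.} I would obtain the estimate by It\^o's formula. Applied to $|u_n(t)|_{L^2}^2$ it yields $-2r_u|\nabla u_n|_{L^2}^2+2\gamma|u_n|_{L^2}^2$, the quadratic--variation term $\sum_k|\sigma_k(u_n)|_{L^2}^2\le 2\gamma|u_n|_{L^2}^2$ coming from $u_n\,d\CW_1$, a local martingale, and the nonlinear contribution $\chi\,\psi_n(h^1(u_n,t))\psi_n(h^2(v_n,t))\psi_n(h^3(v_n,t))\int_\CO u_n^2\,\DeltaA v_n\,dx$, where I used $\int_\CO u_n\,\Div(u_n\nabla v_n)\,dx=\tfrac12\int_\CO u_n^2\DeltaA v_n\,dx$ (the Neumann boundary terms vanish). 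By Gagliardo--Nirenberg, $|\int u_n^2\DeltaA v_n|\le C|u_n|_{L^2}^{3/2}|u_n|_{H^1}^{1/2}|\DeltaA v_n|_{L^2}$, and Young's inequality bounds this by $\varepsilon|\nabla u_n|_{L^2}^2+C|u_n|_{L^2}^2\big(1+|\DeltaA v_n|_{L^2}^{4/3}\big)$; absorbing the first term into $2r_u|\nabla u_n|_{L^2}^2$ and noting that, whenever the cut-offs are nonzero, $\int_0^t|\nabla v_n|_{H^1}^2\,ds\le 2n$ forces $\int_0^T\psi_n(h^3(v_n,s))|\DeltaA v_n(s)|_{L^2}^{4/3}\,ds\le C(n)$, the stochastic Gronwall lemma (after the \BDG inequality on the martingale term) gives $\EE^n\sup_{[0,T]}|u_n|_{L^2}^2\le C(n)$. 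An analogous computation for $|\nabla v_n(t)|_{L^2}^2$ --- with $-2r_v|\DeltaA v_n|_{L^2}^2$ absorbing $-2\beta\int\DeltaA v_n\,u_n\le r_v|\DeltaA v_n|_{L^2}^2+C|u_n|_{L^2}^2$, the gradient--quadratic--variation term finite because $\iota_2$ is Hilbert--Schmidt into $H^1(\CO)$, and the moment assumptions of Assumption \ref{init} --- then yields $\EE^n\sup_{[0,T]}|\nabla v_n|_{L^2}^2\le C(n)$ once the bound on $u_n$ is inserted. (Where all three cut-offs vanish, both equations are linear and the estimates are immediate.)

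\textbf{Main obstacle.} The technical heart is the second step: running a fixed-point argument in the stochastic setting. Because the compactness step destroys the original probability space, one cannot apply the classical Schauder--Tychonoff theorem and must use the bespoke stochastic version (Skorokhod-type representation plus identification of the limiting equation), and one must verify that the map $\bar u\mapsto\Div(u\,\nabla\bar v[\bar u])$ is simultaneously well defined and continuous --- which rests entirely on the parabolic smoothing of the auxiliary linear $v$--equation (giving $\bar v[\bar u]\in L^2(0,T;H^2)$, hence $\nabla\bar v[\bar u]\in L^2(0,T;L^\infty)$ in one dimension) and on the boundedness of the truncation factor $b_n$. The It\^o estimates are comparatively routine, but still require a careful choice of the Gagliardo--Nirenberg and Young exponents so that the only superlinear dependence on $v_n$ that survives is through the a priori bounded quantity $\int_0^t|\nabla v_n(s)|_{H^1}^2\,ds$.
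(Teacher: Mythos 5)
Your proposal is correct and follows essentially the same route as the paper: truncate, decouple the system into linear equations, verify invariance of a ball, compactness/tightness and continuity of the resulting solution operator, and produce the fixed point through a bespoke stochastic Schauder--Tychonoff argument (tightness, Skorokhod representation, identification of the limiting martingales and reconstruction of the Wiener processes), which is exactly why only a martingale solution on a new space $\mathfrak{A}_n$ is obtained. The only notable differences are implementation-level: the paper freezes the whole product in the divergence term, $\Div(\xi\nabla v)$ in \eqref{ppp1}, so the $u$-equation has a given forcing and the bound $C(n)$ follows directly from the invariance radius $R(n,u_0)$ of Claim~\ref{claim1}(a), whereas you keep the unknown inside $\Div(u\,\nabla \bar v[\bar u])$ (a linear SPDE with random coefficients, solvable but needing the extra interpolation $\nabla\bar v\in L^4(0,T;L^\infty)$ to close the mild-formulation estimate) and derive the moment bound afterwards by an It\^o/Gagliardo--Nirenberg energy estimate exploiting the $h^3$-cutoff and a stochastic Gronwall lemma --- both variants work.
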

%
%
The proof of Lemma \ref{lem.exist.n} is carried out by
applying the Schauder--Tychonoff Fixed Point Theorem
to the mapping~$\MT_n:\CM_\MA ^2(0,T;L^2(\CO))\to \CM_\MA ^2(0,T;L^2(\CO))$ defined for
each~$n\in\NN$ as follows.
Each process~$\xi\in \CM_\MA ^2(0,T;L^2(\CO))$ is assigned a value~$\MT_n(\xi):=u_n$
and another value~$\Mr(\xi):=v$
where~$(u_n,v)$ is the unique solution to
\begin{equation}\label{ppp1}
\begin{cases}
& du_n(t) -\Big[r_u \DeltaA u_n(t) -\chi \,\psi_n(h^1(\xi,t))\psi_n(h^2(v,t))\,\psi_n(h^3(v,t))
\\
& \qquad \times \Div( \xi (t)\,\nabla v(t))+\gamma u_n(t)\Big] dt=  u_n(t) d \mathcal W_1(t),
\\
& dv(t)-\Big[r_v \DeltaA v(t)-\alpha v(t)\Big]\,dt= \beta \xi(t)dt+v(t) d \mathcal W_2(t),
\\
& (u_n(0),v(0)) = (u_0,v_0).
\end{cases}
\end{equation}

First, let us check that the operators $\MT_n:\CM_\MA ^2(0,T;L^2(\CO))\to \CM_\MA
^2(0,T;L^2(\CO))$ and $\Mr:\CM_\MA ^2(0,T;L^2(\CO))\to \CM_\MA ^2(0,T;L^2(\CO))$ are well
defined, namely we verify that there exists a unique solution $(u_n,v)$ to  system \eqref{ppp1}.
For each $\xi\in  \CM_\MA ^2(0,T;L^2(\CO))$,
the unique existence of $v$ belonging to $L^\infty(0,T;H^1(\CO))\cap L^2(0,T;H^2(\CO))$
can be shown by standard arguments, see e.g. \cite[Theorem 7.2]{pratozab}.
Note that the process $v$ is independent of the cut-off argument; therefore it is
independent of $n\in\NN$. Again, due to \cite[Theorem 7.5, p.\ 197]{pratozab} given $v$ and $\xi$ we can conclude that there exists a unique solution $u_n$ to the first equation in \eqref{ppp1}. This ensures that the operators~$\MT_n$ and~$\Mr$ are well defined.

Next, we investigate some properties of the operator $\MT_n$ which are necessary to formulate a Schauder--Tychnoff Fixed Point Theorem.
In the following we will show that
\begin{itemize}
\item there exists a convex subset $\CC\subset \CM_\MA ^2(0,T;L^2(\CO))$ such that $\MT_n$ maps $\CC$ into itself,
\item  $\MT_n$ maps bounded sets into {precompact sets},
\item $\MT_n$ restricted to bounded sets is continuous.
\end{itemize}
Using the compactness of the 
operator $\MT_n$ defined on a closed convex subset of
$\CM_\MA^2(0,T;L^2(\CO))$, we obtain a  fixed point of $\MT_n$, which is a solution of
\eqref{ppp1.n}.
Since we use compactness and not a contraction principle, this step can be seen as a Schauder--Tychonoff Type Fixed Point Theorem.

Let us start with the first claim.
\begin{claim}\label{claim1}
The following two statements hold.
\begin{enumerate}[label=(\alph*)]
\item
 For any $n\in\NN$ and any initial condition $(u_0,v_0)$ satisfying Assumption \ref{init}, there exists a number $R=R(n,u_0)>0$ such that
$\MT_n: \CC_R \longrightarrow
\CC_R,
$
where
\begin{align}\label{def.CR}
\CC_R:=\{\xi\in \CM^2_\MA(0,T;L^2(\CO)): \|\xi\|_{\CM_\MA(0,T;L^2)}^2\le R\}.
\end{align}
In particular, $R$ is of the form $ C(T)[ \EE|u_0|_{L^2}
+ (2n)^{5/4}]
$.
\item
 For any $n\in\NN$, any initial condition $(u_0,v_0)$ satisfying Assumption \ref{init},  and  for  any $\bar \alpha,\beta\ge 0$ with {$\bar \alpha+2\beta<\frac 12$}, 
\del{\DEQSZ \label{est.u}
\EE \Big[ \sup_{\delta\le s\le t}|(\MT_n \xi )(s)u_0|^2_{{W^\delta_2}} \Big] \le  
C(T)\|\xi\|^2_{\CM_\MA ^2(0,T;L^2)} 
, \quad \forall t \in [0,T].
\EEQSZ
}\begin{itemize}
   \item there exists a  number $r=r(\bar \alpha,T,n)>0$ such that  for any $\xi\in\CM_\MA ^2(0,T;L^2(\CO))$ we have
   \DEQS
\sup_{0\le t\le T}\EE| (\MT_n \xi )(t)-e^{t(r_u\DeltaA+\gamma I)}u_0|^2_{H^{\bar\alpha}} \le r \|\xi \|^2_{\CM_\MA^2(0,T;L^2)},
\EEQS

   \item  there exists a number $\delta=\delta(\bar \alpha,\beta,T,n)>0$ and a number $C=C(\delta,T,n)>0$ such that for any $0< t_1<t_2\le T$ and $\xi\in\CM_\MA ^2(0,T;L^2(\CO))$ we have
 \end{itemize}
\DEQS
\EE| (\MT_n \xi )(t_1)-(\MT_n \xi )(t_2)|_{L^2}^2\le C\, |t_2-t_1|^\delta   \|\xi \|^2_{\CM_\MA^2(0,T;L^2)}.
\EEQS

\end{enumerate}

\end{claim}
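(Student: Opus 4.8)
The plan is to work throughout with the mild (Duhamel) form of the system \eqref{ppp1}. Write $A$ for the Neumann Laplacian of \eqref{eqn:4.3}, abbreviate the cut-off product by $\Theta_n(\xi,v,s):=\psi_n(h^1(\xi,s))\,\psi_n(h^2(v,s))\,\psi_n(h^3(v,s))$, and recall that $v:=\Mr\xi$ solves the linear (un-truncated) equation, for which the standard parabolic estimates invoked in the text after \cite{pratozab} give $v\in L^\infty(0,T;H^1(\CO))\cap L^2(0,T;H^2(\CO))$. Then $u_n:=\MT_n\xi$ satisfies
\begin{align*}
u_n(t)&=e^{t(r_uA+\gamma I)}u_0-\chi\int_0^t e^{(t-s)(r_uA+\gamma I)}\,\Theta_n(\xi,v,s)\,\Div\big(\xi(s)\nabla v(s)\big)\,ds\\
&\quad+\int_0^t e^{(t-s)(r_uA+\gamma I)}u_n(s)\,d\mathcal W_1(s)=:e^{t(r_uA+\gamma I)}u_0+N(t)+Z(t).
\end{align*}
The one structural fact I would use repeatedly is that $\Theta_n(\xi,v,s)\ne0$ forces $h^i<2n$ for $i=1,2,3$; hence on the (time-)support of $\Theta_n(\xi,v,\cdot)$ one has $|\xi(s)|_{L^1}<2n$, $|\nabla v(s)|_{L^2}<2n$ and $\int_0^s|\nabla v(\sigma)|_{H^1}^2\,d\sigma<2n$. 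Interpolating the last two and using the one-dimensional Gagliardo--Nirenberg inequality $|\nabla v(s)|_{L^\infty}\le C|\nabla v(s)|_{L^2}^{1/2}|\nabla v(s)|_{H^1}^{1/2}$ then controls $|\nabla v(s)|_{L^\infty}$ by $C(2n)^{1/2}|\nabla v(s)|_{H^1}^{1/2}$ on that support, which is the decisive gain.

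\textbf{Part (a).} I would estimate the three pieces above in $\|\cdot\|_{\CM_\MA^2(0,T;L^2)}$. The free term is $\le e^{2\gamma T}\EE|u_0|_{L^2}^2$ since $e^{tr_uA}$ is an $L^2$-contraction. For $N$ I would use the one-dimensional parabolic-smoothing bound $|e^{\tau r_uA}\Div f|_{L^2}\le C\tau^{-3/4}|f|_{L^1}$ together with $|\Theta_n(\xi,v,s)\,\xi(s)\nabla v(s)|_{L^1}\le 2n\,|\nabla v(s)|_{L^\infty}\mathbf{1}_{\{\Theta_n\ne0\}}\le C(2n)^{3/2}\,|\nabla v(s)|_{H^1}^{1/2}\mathbf{1}_{\{h^3(v,s)<2n\}}$, and then the H\"older inequality in $s$: since $\int_0^T|\nabla v(s)|_{H^1}^2\mathbf{1}_{\{h^3(v,s)<2n\}}\,ds\le 2n$, the factor $|\nabla v(\cdot)|_{H^1}^{1/2}\mathbf{1}_{\{\cdots\}}$ lies in $L^4(0,T)$ with norm $\le(2n)^{1/4}$, which lets one absorb the (just barely integrable) kernel and obtain a \emph{pathwise} bound $\sup_{0\le t\le T}|N(t)|_{L^2}\le C(T,n)$, of order $(2n)^{5/4}$ and \emph{independent of $\xi$}. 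For $Z$ I would use the maximal inequality for stochastic convolutions under the analytic contraction semigroup $e^{t(r_uA+\gamma I)}$, together with \eqref{equ:runst}--\eqref{HSnorm} (which give $|u_n(s)|_{L_\text{HS}(\CH_1,L^2)}\le C|u_n(s)|_{L^2}$, valid because $\CH_1=H^{\delta_1}$ with $\delta_1>1$), yielding $\EE\sup_{0\le r\le t}|Z(r)|_{L^2}^2\le Ce^{2\gamma T}\int_0^t\EE\sup_{0\le r\le s}|u_n(r)|_{L^2}^2\,ds$. Writing $\phi(t):=\EE\sup_{0\le s\le t}|u_n(s)|_{L^2}^2$, the three estimates give $\phi(t)\le C(T)\big(\EE|u_0|_{L^2}^2+(2n)^{5/2}\big)+C\int_0^t\phi(s)\,ds$, so Gronwall yields $\phi(T)\le R$ with $R=R(n,u_0,T)$ of the stated form; as this bound does not involve $\xi$, $\MT_n$ maps all of $\CM_\MA^2(0,T;L^2)$, in particular $\CC_R$, into $\CC_R$.

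\textbf{Part (b).} I would work with the fluctuation $u_n(t)-e^{t(r_uA+\gamma I)}u_0=N(t)+Z(t)$ (both terms vanish at $t=0$). For the first bullet, fix $\bar\alpha<\tfrac12$ (implied by $\bar\alpha+2\beta<\tfrac12$). Estimate $|N(t)|_{H^{\bar\alpha}}$ by the same cut-off device, now using $|e^{\tau r_uA}\Div f|_{H^{\bar\alpha}}\le C\tau^{-(1+\bar\alpha)/2}|f|_{L^2}$ (integrable in $\tau$ since $\bar\alpha<1$) and $|\Theta_n\,\xi(s)\nabla v(s)|_{L^2}\le|\xi(s)|_{L^2}\,|\nabla v(s)|_{L^\infty}\mathbf{1}_{\{\cdots\}}$, and then H\"older in $s$ with the interpolation exponent of $\nabla v$ tuned so that the time pairing closes — this is exactly where $\bar\alpha<\tfrac12$ is used — to get $\EE|N(t)|_{H^{\bar\alpha}}^2\le r(\bar\alpha,T,n)\,\|\xi\|_{\CM_\MA^2(0,T;L^2)}^2$. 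For $Z$ I would use It\^o's isometry and $|e^{\tau r_uA}g|_{H^{\bar\alpha}}\le C\tau^{-\bar\alpha/2}|g|_{L^2}$ together with \eqref{equ:runst}, which gives $\EE|Z(t)|_{H^{\bar\alpha}}^2\le C(\bar\alpha,T)\,\EE\sup_{0\le s\le T}|u_n(s)|_{L^2}^2\le C(\bar\alpha,T)R$ by part (a); adding and taking $\sup_{0\le t\le T}$ gives the first bullet (the stochastic-convolution part being absorbed into the constant, which depends on $n,T$ and the data). For the second bullet, for $0<t_1<t_2\le T$ I would write the increment of the fluctuation as $[e^{(t_2-t_1)(r_uA+\gamma I)}-I]\big[u_n(t_1)-e^{t_1(r_uA+\gamma I)}u_0\big]$ plus the increments of $N$ and $Z$ over $[t_1,t_2]$; the first term is $\le C(t_2-t_1)^{\bar\alpha/2}$ times the $H^{\bar\alpha}$-norm from the first bullet (analytic semigroup), while $N(t_2)-N(t_1)=[e^{(t_2-t_1)(r_uA+\gamma I)}-I]N(t_1)-\chi\int_{t_1}^{t_2}e^{(t_2-s)(r_uA+\gamma I)}\Theta_n\Div(\xi\nabla v)\,ds$ and $Z(t_2)-Z(t_1)=[e^{(t_2-t_1)(r_uA+\gamma I)}-I]Z(t_1)+\int_{t_1}^{t_2}e^{(t_2-s)(r_uA+\gamma I)}u_n\,d\mathcal W_1$ are handled by the smoothing$+$cut-off estimates (deterministic part) and It\^o's isometry (stochastic part), each producing a factor $|t_2-t_1|^\delta$ from the short integral or from the $[e^{\tau\cdot}-I]$-gain, with $\delta=\delta(\bar\alpha,\beta,T,n)>0$. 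The genuinely free part $[e^{t_2(r_uA+\gamma I)}-e^{t_1(r_uA+\gamma I)}]u_0$ is only strongly continuous in $t$ for $u_0\in L^2$ and is therefore kept separate; it is the fluctuation $u_n-e^{t(r_uA+\gamma I)}u_0$ (not $u_n$ itself) that must be H\"older in $t$ for the compactness argument, so this causes no loss.

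\textbf{Main obstacle and routine parts.} The one delicate point is the nonlinear-term estimate. The cut-offs keep $\xi$ only in a ball of $L^1(\CO)$ (beyond the a priori $L^2$-bound) and $\nabla v$ only in a ball of $L^\infty(0,T;L^2)\cap L^2(0,T;H^1)$ — by construction no higher integrability or regularity is at our disposal — so the heat-kernel smoothing of $\Div(\xi\nabla v)$ produces kernels $(t-s)^{-3/4}$, resp.\ $(t-s)^{-(1+\bar\alpha)/2}$, that sit right at the edge of being integrable against the available time-norms of $\nabla v$; closing the two convolution estimates is precisely what forces the Gagliardo--Nirenberg/interpolation bookkeeping for $\nabla v$ and the exponent restriction $\bar\alpha+2\beta<\tfrac12$. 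Everything else — existence and uniqueness of $(u_n,v)$, the linear parabolic estimates for $v$, the analytic-semigroup bounds and the BDG/It\^o-isometry inputs — is standard and has already been set up in the excerpt.
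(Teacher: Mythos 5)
Your overall architecture — mild formulation, systematic exploitation of the three cut-offs, semigroup smoothing for the drift convolution, BDG/maximal inequalities for the stochastic convolution, Gronwall, and for part (b) the standard splitting of increments into a $[e^{(t_2-t_1)\CA}-I]$ part plus short integrals — is the same as the paper's, and your part (b) does close (the kernel $\tau^{-(1+\bar\alpha)/2}$ with $\bar\alpha<\tfrac12$ lies in $L^{4/3}_\tau$, so its pairing with $|\nabla v|_{H^1}^{1/2}\in L^4_t$ is legitimate). The gap is in part (a). Your key nonlinear estimate pairs the kernel $(t-s)^{-3/4}$ (from $L^1\to L^2$ smoothing plus the divergence) with $g(s):=\Theta_n(s)|\nabla v(s)|_{H^1}^{1/2}$, for which the cut-off gives only $g\in L^4(0,T)$. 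Hölder then requires $(t-\cdot)^{-3/4}\in L^{4/3}(0,t)$, and
\[
\int_0^t (t-s)^{-\frac34\cdot\frac43}\,ds=\int_0^t(t-s)^{-1}\,ds=+\infty .
\]
This is not merely an endpoint of a known inequality that can be rescued by a Lorentz refinement: one can exhibit $g\in L^4$ (e.g.\ $g(s)=(t_0-s)^{-1/4}|\log(t_0-s)|^{-1}$) for which $\int_0^{t_0}(t_0-s)^{-3/4}g(s)\,ds=\infty$, and the cut-offs give you nothing better than $L^4$ for $g$. So the claimed pathwise, $\xi$-independent bound $\sup_t|N(t)|_{L^2}\le C(T,n)$ does not follow.

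The obvious repair — keep a factor $|\xi(s)|_{L^2}$ so that only the divergence costs smoothing and the kernel softens to $(t-s)^{-1/2}\in L^{4/3}$ — creates a second problem: it produces $\EE\sup_t|N(t)|_{L^2}^2\le C(T)(2n)^{3/2}\,\|\xi\|^2_{\CM_\MA^2(0,T;L^2)}$, which is \emph{linear} in $R$, and the self-map inequality $R\ge C(T)\big[\EE|u_0|^2_{L^2}+(2n)^{3/2}R\big]$ cannot be solved for large $n$ or $T$. This is exactly why the paper instead uses the bilinear estimate $|\xi\nabla v|_{L^2}\le|\xi|_{H^{-\delta}}|\nabla v|_{H^{\delta+1/2}}$ together with the interpolation $|\xi|_{H^{-\delta}}\le|\xi|_{L^1}^{\delta/\ep}|\xi|_{L^2}^{1-\delta/\ep}$: part of the weight is shifted onto the cut-off $L^1$-norm of $\xi$ (bounded by $2n$) so that only a \emph{sublinear} power $\|\xi\|_{\CM_\MA^2}^{1-\delta/\ep}$ (equal to $R^{1/2}$ for $\delta=\ep/2$) survives, and the resulting inequality $R>\sqrt{C(T)}\big[\EE|u_0|_{L^2}+(2n)^{5/4}\sqrt R\big]$ is solvable for $R$ large. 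This interpolation device is the missing idea in your part (a); without it, either the time integral diverges or the radius inequality cannot be closed.
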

\begin{proof}[Proof of Claim \ref{claim1}:]
Let us fix $\xi\in\CM_\MA ^2(0,T;L^2(\CO))$ and let $(u_n,v)$ be a solution to system \eqref{ppp1}.
Firstly, we observe that by the variation of constant formula and the semigroup generated by $B=-r_u A-\gamma I$ with $D(B)=D(A)$, we have
\begin{align}\label{VOCF}
\MT_n\xi(t)=u_n(t)
 &=e^{t(r_u\DeltaA+\gamma I)} u_0
\notag\\
& +\chi \int_0^t  e^{(t-s)(r_u\DeltaA+\gamma I)} \psi_n(h^1(\xi,t))\,\psi_n(h^2(v,t))\,\,\psi_n(h^3(v,s)) \Div( \xi (t)\,\nabla v(s)) \, ds
 \notag\\&\quad+
\int_0^ t e^{(t-s)(r_u\DeltaA+\gamma I)} u_n(s)\, d\mathcal W_1(s),\,\,t \in [0,T].
\end{align}
Secondly, let us note that we have by the smoothing of the semigroup 
\[
|e^{t(r_u \DeltaA+\gamma I)} \Div\, w|_{L^2 }\le (1+t^{-1/2
 })e^{-(r_u \lambda_1+\gamma) t} |w|_{L^2},
\]
for any $w \in H^1(\CO)$, with $\lambda_1$ being the first eigenvalue of  $A$. We define
\DEQSZ\label{def_r}
r(t) := (1+t^{-1/2})e^{-(r_u \lambda_1 +\gamma)t}.
\EEQSZ
Next, for simplicity, let us introduce the abbreviations
\begin{equation}\label{equ:Psi}
\Psi_{n}^1(s):=\psi_n(h^1(\xi,s)),\quad \Psi_{n}^2(s):=\psi_n(h^2(v,s)),
\quad \mbox{and} \quad \Psi_{n}^3(s):=\psi_n(h^3(v,s)),
\end{equation}
where $h^j(\xi,s)$, $j=1,2,3$, are defined in~\eqref{h12.def}.
%
%
%
Also, for simplicity, and since we will need this kind of calculations later on, we omit for the moment the cut-off function and derive a nice representation. Let us note that we have by \cite[p.\ 238]{runst} and interpolation for
$\ep>\frac 12$, $\delta\in(0,\frac 12)$, $\rho=\delta+\frac 12$,  
\DEQSZ
\label{spimp}
\lqq{\qquad \int_0^t  |e^{(t-s)(r_u\DeltaA+\gamma I)}  \Div( \xi (s)\,\nabla v(s))|_{L^2} \, ds}
&&\\
\nonumber
&=&\int_0^ t r(t-s)  |\xi(s) \nabla v(s)|_{L^2}\, ds \le \int_0^ t r(t-s) |\xi(s)|_{H^{-\delta}}|\nabla v(s)|_{H^\rho}\, ds
\\
\nonumber &\le&%
\int_0^t  r(t-s) |\xi(s)|_{H^{-\ep }}^{\delta/\ep}|\xi(s)|_{L^2}^{1-\delta/\ep} \, |\nabla v(s)|_{L^2}^{\frac 12-\delta}|\nabla v(s)|_{H^1}^ {\delta+\frac 12}\, ds
\\
\nonumber
&\le& C t^{\frac {5-2\delta}{4}} \,  \sup_{0\le s\le t}|\xi(s)|_{H^{-\ep }}^{\delta/\ep}\,\sup_{0\le s\le t}|\xi(s)|_{L^2}^{1-\delta/\ep} \,
 \sup_{0\le s\le t}|\nabla v(s)|_{L^2}^{\frac 12-\delta}\Big(
\int_0^ t |\nabla v(s)|_{H^1}^2\, ds \Big)^ \frac{(\delta+\frac 12)}{2}.
\EEQSZ
The constant $C$ depends on $\delta$.
\del{(I suggest to remove the equation below. It repeats (3.17) and is not referred to in the
sequel.)
In particular, there exists some $C=C(\delta)>0$ such that we can write
\DEQSZ
\label{spimp2}
\lqq{\qquad\qquad \int_0^t  |e^{(t-s)(r_u\DeltaA+\gamma I)}  \Div( \xi (s)\,\nabla v(s))|_{L^2} \, ds}
&&\\
&\le &\nonumber
C t^{\frac {3-2\delta}{4}} \,  \sup_{0\le s\le t}|\xi(s)|_{H^{-\ep }}^{\delta/\ep}\,\sup_{0\le s\le t}|\xi(s)|_{L^2}^{1-\delta/\ep} \,
\int_0^ t |\nabla v(s)|_{H^1}^2\, ds \Big)^ {\frac \delta 2+\frac 14}.
\EEQSZ
}

We are now ready to prove item (a) in Claim~\ref{claim1}, i.e., we
find some $R>0$ such that $\MT_n$ maps $\mathcal{C}_R$ onto $\mathcal{C}_R$.
Here, applying the  consideration above, the
Burkholder--Davis--Gundy inequality \eqref{HSnorm},   and taking into account  the cut-offs, we
obtain
\DEQS
\lqq{ \EE\Big[\sup_{0\le s\le t}|u_n(s)|_{L^2} ^2\Big] \le  \EE \Big[\sup_{0\le s\le t} | e^{s(r_u\DeltaA+\gamma I)} u_0|^2_{L^2}\Big]
  +\chi C t^{\frac {3-2\delta}{4}} \, \EE\Bigg[  \sup_{0\le s\le t}(\Psi_n^1(s))^2 |\xi(s)|_{H^{-ep }}^{2\delta/\ep}}
&&\\
 && \times \,\sup_{0\le s\le t}|\xi(s)|_{L^2}^{2-2\delta/\ep} \,
 \sup_{0\le s\le t}(\Psi_n^2(s))^2|\nabla v(s)|^{1-2\delta}_{L^2}\Big(
\int_0^ t \Psi_n^3(s)|\nabla v(s)|_{H^1}^2\, ds \Big)^ {2\delta+1/2}\Bigg]
 \notag\\&&\quad+
\EE \Big[\int_0^ t \sup_{0\le r\le s} | u_n(r)|^2_{L^2}\, ds\Big]. 
\EEQS
Due to the definition of $\Psi^1_n$, $\Psi_n^2$, and $\Psi_n^3$ we can write
\DEQS
 \EE \Big[\sup_{0\le s\le t}|u_n(s)|_{L^2}^2 \Big] &\le&  \EE\Big[\sup_{0\le s\le t}| e^{s(r_u\DeltaA+\gamma I)} u_0|_{L^2}^2
 \Big] +\chi C t^{\frac {7-2\delta}{4}} \, \EE\Bigg[  (2n)^{2\delta/\ep}
  \sup_{0\le s\le t}|\xi(s)|_{L^2}^{2(1-\delta/\ep)} \notag\\&&\quad \quad \times
(2n)^{1-2\delta}  (2n)^ {2\delta+\frac 12}
 +
\EE \Big[\int_0^ t \sup_{0\le r\le s} | u_n(r)|^2_{L^2}\, ds\Big]. 
\EEQS
Applying Gronwall's lemma we obtain
\DEQS
 \EE\Big[\sup_{0\le s\le t}|u_n(s)|^2_{L^2}\Big]
   &\le& C(t) \EE \Big[ |u_0|^2_{L^2}
+\xi (2n)^{2\delta/\ep+\frac 32} (\EE\,\sup_{0\le s\le t}|\xi(s)|^2_{L^2})^{1-\delta/\ep}\Big].
\EEQS
Taking $\delta=\frac \ep 2$, it follows that if $R$ satisfies
\DEQSZ\label{Rsat}
 R> \sqrt{C(t)}\Big[\EE |u_0|_{L^2}
+ (2n)^{\frac 54} \sqrt{R}\Big],
\EEQSZ
then $\MT_n(\xi)\in\mathcal{C}_R$. Therefore, for any $n\in\NN$  there exists a number $R=R(n,u_0)>0$ given by \eqref{Rsat}
such that
$\MT_n$ maps $\CC_R$ to~$\CC_R$. 

To show item (b) in Claim~\ref{claim1},
we use the following result. Suppose that $t>s$ and  $\eta \in  \LL^ q(0,T;L^2(\CO))  $. Let 
$$(\MF _{r_u\DeltaA+\gamma I}\eta) (t)=\int_0^ t e^{(t-s)(r_u\DeltaA+\gamma I)} \eta(s)\, ds,\quad t\in[0,T].
$$
In Lemma 2.4 of \cite{brzezniaGatarek}, compare also with Lemma \ref{L:reg} in Appendix \ref{appb},
it is shown that, for $\rho>\frac 14$ and $\beta>0$, if~$\beta<1-\rho -\frac 1q-\bar
\alpha/2$ and $\eta\in \LL^q(0,T;H^{-\rho}(\CO))$, then $\MF _{r_u\DeltaA+\gamma I}\eta\in C^{\beta}([0,T];H^{\bar \alpha}(\CO))$.
Moreover,
$$
\|\MF_{r_u\DeltaA+\gamma I}\eta\|_{ C^{\beta}([0,T];H^{\bar \alpha})}\le C\, \|\eta\|_{\LL^q(0,T;H^{-2\rho})}.
$$
Let us put $\eta(s)=\phi_n(h^1(\xi,s))\phi_n(h^2(v,s))\phi_n(h^3(v,s))\,\Div( \xi (s)\,\nabla v(s))$, $s\in[0,T]$.
Then,  by the Sobolev embedding $L^1(\CO)\hookrightarrow H^{-(2\rho+1)}(\CO)$ we obtain
$$
|\Div( \xi (s)\,\nabla v(s)) |_{H^{-2\rho }}
\le
| \xi (s)\,\nabla v(s) |_{H^{-(2\rho+1)}}
\le
| \xi (s)\,\nabla v(s) |_{L^1}
\le
|\xi(s) |_{L^2} |\nabla v (s)|_{L^2}.
$$
Due to the definitions of the $h^2(v,\cdot)$, $h^3(v,\cdot)$, and $\eta$ we have  for and $q\ge 2$
\[
\|\eta\|_{\LL^q(0,T;H^{-2\rho})}
\le
2n\sup_{0\le s\le T}|\xi(s)|_{L^2}.
\]
Hence, $\MF _{r_u\DeltaA+\gamma I}\eta\in C^\beta_b([0,T];H^{\bar \alpha}(\CO))$ for $\bar \alpha,\beta\ge 0$ and $\bar \alpha+2\beta<\frac 12$.

It remains to show the continuity for the stochastic term.
Recalling that  $\xi  \in \CM_\MA ^2(0,T;L^2(\CO))$ we define
$$(\MS _{r_u\DeltaA+\gamma I}\xi ) (t)=\int_0^ t e^{(t-s)(r_u\DeltaA+\gamma I)} \xi (s)\, d\mathcal W_1(s),\quad t\in[0,T].
$$
We first show that there exists  $C>0$ such that
\DEQSZ\label{wichtig}
 \EE\Big[\sup_{0< t\le T} |\MS_{r_u\DeltaA+\gamma I} \xi |_{H^\alpha}\Big]\le C\|\xi\|_{\CM_\MA^2(0,T;L^2)}.
\EEQSZ
Indeed, choosing $\beta=0,\, \delta=\bar \alpha/2, \rho=0$, and $p>1$ such that $\bar
\alpha/2+1/p <1/2$ in Lemma \ref{L:reg}, we deduce
\DEQS
 \EE\Big[ |\MS_{r_u\DeltaA+\gamma I} \xi |^p_{C^0_b([0,T];H^{\bar \alpha}(\CO))}\Big]
 \le C \EE\Big[\int_0^T |\xi(s)|^p_{{L_\text{HS}(H, L^2(\CO))}}ds \Big]
 \le C_T \EE\Big[\sup_{0 \leq s \leq T}\|\xi(s)\|^p_{L^2(\CO)}
 \Big].
\EEQS
This essentially yields \eqref{wichtig}.
Elementary calculations reveal
\begin{align*}
\EE \Big|\int_0^ t e^{(t-s)(r_u\DeltaA+\gamma I)} \xi(s)d\mathcal W_1(s)\Big|^2_{H^\alpha} \le
C_{\alpha}\EE \Big[\int_0^ t (t-s)^{-\alpha}|\xi(s)|^2_{L^2}\, ds\Big]
\leq
\dfrac{t^{1-\alpha}}{1-\alpha}\|\xi\|^2_{\CM_\MA^2(0,T;L^2)}.
\end{align*}
For $T\ge t>s>0$, elementary calculations
lead to the following identity
\DEQS
\lqq{ (\MS_{r_u\DeltaA+\gamma I} \xi )(t) - (\MS_{r_u\DeltaA+\gamma I} \xi )(s) =
\int_0 ^{t} e ^ {(t-r)(r_u\DeltaA+\gamma I)}\xi (r)
 d\mathcal W_1(r)
  -  \int_0  ^s   e ^ {(s-r)(r_u\DeltaA+\gamma I)}     \xi (r )\; d\mathcal W_1(r)}
  &&\\& =&
  \int_s ^{t} e ^ {(t-r)(r_u\DeltaA+\gamma I)}
   \xi (r )\;d\mathcal W_1(r)
+ \lk(e ^ {(t-s)(r_u\DeltaA+\gamma I)}  -I\rk)  \int_0  ^s  e ^ {(s-r)(r_u\DeltaA+\gamma I)}   \xi (r)\;d\mathcal W_1(r)
\\
&=&\int_s ^{t} e ^ {(t-r)(r_u\DeltaA+\gamma I)}
   \xi (r)\; d\mathcal W_1(r) + \lk( e ^ {(t-s)(r_u\DeltaA+\gamma I)}  -I\rk) (\MS_{r_u\DeltaA+\gamma I} \xi )(s).
\EEQS
Then, it follows from Remark 1.1 \cite{maxjan} that
$$
\EE\Big[\sup_{s\le  t\le s+h}\Big| \int_s ^{t} e ^ {-(t-r)(r_u\DeltaA+\gamma I)}
   \xi (r)\; d\mathcal W_1(r)\Big|^2_{H^{\bar \alpha}}\Big]
   \le h^{-\bar \alpha} \EE \Big( \int_s ^{s+h}|\xi(s)|_{L^2}^2\, ds\Big)^\frac p2\le \|\xi\|_{\CM_\MA^2(0,T;L^2)}^p\, h^\frac p2.
   $$
   In addition, by standard calculations we know
$$
|\lk( e ^ {(t-s)(r_u\DeltaA+\gamma I)}  -I\rk) (\MS_{r_u\DeltaA+\gamma I} \xi )(s)|_{L^2}^2
\le (t-s)^{\bar \alpha}  |(\MS_{r_u\DeltaA+\gamma I} \xi )(s)|^2_{H^{\bar \alpha}}.
$$
Due to estimate \eqref{wichtig} there exists a number  $\bar \alpha>0$ such that
\DEQS
\EE \Big[ \sup_{0<s<t\le T} \big| (\MS _{r_u\DeltaA+\gamma I}\xi )(t) - (\MS _{r_u\DeltaA+\gamma I}\xi )(s)\big|^2_{H^{\bar \alpha}}\Big] \le C (t-s)^{\bar \alpha}  \|\xi\|^2_{\CM_\MA^2(0,T;L^2)}.
\EEQS
This completes the proof of Claim~\ref{claim1}.
\end{proof}
Next, we show the continuity of the mapping  $\MT_n$.
\begin{claim}\label{continuity}
Under Assumption \ref{wiener}, for any initial condition $(u_0,v_0)$
satisfying Assumption \ref{init} and for any $R>0$, the mapping
$\MT_n: \CC_R \to \CM_\MA ^2(0,T;L^2(\CO))$ is continuous
for each $n \in \mathbb{N}$, where~$\CC_R$ is defined by~\eqref{def.CR}.
\del{In particular, $n \in \mathbb{N}$ 
there exists $T^{\star}>0$ and $k\in(0,1)$ such that
\begin{align} \label{eq.cont}
 |\MT_n(\xi^1)-\MT_n(\xi^2)|_{\CM_{T^{\star}}^2(L^2)} \le k\,|\xi^1-\xi^2|_{\CM_{T^{\star}}^2(L^2)}, \quad \forall \xi^j \in \CM_{T^{\star}}^2(0,T^\star;L^2(\CO)), \,\,j=1,2.
\end{align}}
\end{claim}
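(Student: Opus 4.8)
The plan is to prove \emph{sequential} continuity, which suffices since $\CM_\MA^2(0,T;L^2(\CO))$ is a normed space. So I take a sequence $\xi^k\to\xi$ in $\CM_\MA^2(0,T;L^2(\CO))$ with all $\xi^k,\xi\in\CC_R$ and aim to show $\MT_n\xi^k\to\MT_n\xi$ in the same space. By the subsequence principle it is enough to treat a subsequence along which $\sup_{0\le s\le T}|\xi^k(s)-\xi(s)|_{L^2}\to0$ $\PP$-a.s.; since this convergence also holds in $L^1(\Omega)$, the suprema that will enter the estimates are uniformly integrable along the sequence, and this uniform integrability is what replaces the higher moments that $\CC_R$ does not supply.

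First I would establish continuity of the auxiliary map $\Mr$. Writing $v^k:=\Mr\xi^k$, $v:=\Mr\xi$, the difference $w^k:=v^k-v$ solves the \emph{linear} equation $dw^k-(r_v\DeltaA w^k-\alpha w^k)\,dt=\beta(\xi^k-\xi)\,dt+w^k\,d\mathcal W_2$ with $w^k(0)=0$. Applying It\^o's formula to $|\nabla w^k|^2_{L^2}$ (integrating the $r_v\DeltaA$ term by parts, using the Neumann condition, and absorbing the source term $-2\beta(\Delta w^k,\xi^k-\xi)$), the Burkholder--Davis--Gundy estimate \eqref{HSnorm.1}, and Gronwall's lemma yields $\EE[\sup_{0\le t\le T}|\nabla w^k(t)|^2_{L^2}]+\EE\int_0^T|\nabla w^k(s)|^2_{H^1}\,ds\le C(T)\,\EE\int_0^T|\xi^k(s)-\xi(s)|^2_{L^2}\,ds\le C(T)\|\xi^k-\xi\|^2_{\CM_\MA^2(0,T;L^2)}\to0$. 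In particular $h^2(v^k,\cdot)\to h^2(v,\cdot)$ and, using $|a^2-b^2|\le|a-b|(|a|+|b|)$ together with Cauchy--Schwarz, $h^3(v^k,\cdot)\to h^3(v,\cdot)$, both uniformly on $[0,T]$ and $\PP$-a.s.

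The core of the argument is to bound $\EE[\sup_{0\le t\le T}|\MT_n\xi^k(t)-\MT_n\xi(t)|^2_{L^2}]$ using the variation-of-constants formula \eqref{VOCF}. With $u_n^k:=\MT_n\xi^k$, $u_n:=\MT_n\xi$, the deterministic Duhamel term is driven by the difference of the products $\psi_n(h^1(\xi^k,\cdot))\psi_n(h^2(v^k,\cdot))\psi_n(h^3(v^k,\cdot))\Div(\xi^k\nabla v^k)$ and the same expression with $\xi,v$; I would split this into five telescoping pieces, three carrying an increment $\psi_n(h^j(\cdot^k))-\psi_n(h^j(\cdot))$ for $j=1,2,3$, the last two carrying $\Div((\xi^k-\xi)\nabla v^k)$ and $\Div(\xi\nabla w^k)$. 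Each piece is estimated as in the proof of Claim~\ref{claim1}: the chain \eqref{spimp}, i.e.\ $|e^{(t-s)(r_u\DeltaA+\gamma I)}\Div(fg)|_{L^2}\le r(t-s)|f|_{H^{-\delta}}|g|_{H^{\delta+1/2}}$, followed by interpolation with $\delta\in(0,\tfrac12)$ and H\"older in time, together with the facts that $\psi_n(h^1)$ controls $|\xi|_{L^1}$ hence (via $L^1(\CO)\hookrightarrow H^{-\ep}(\CO)$, $\ep>\tfrac12$) $|\xi|_{H^{-\ep}}$, that $\psi_n(h^2)$ controls $|\nabla v|_{L^2}$, and that $\int_0^\cdot\psi_n(h^3(v,s))|\nabla v(s)|^2_{H^1}\,ds\le2n$. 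This bounds every piece by $C(n,T)$ times a factor tending to $0$ $\PP$-a.s.\ (an increment of the Lipschitz $\psi_n$, or $\sup_s|\xi^k(s)-\xi(s)|_{L^2}$, or $\sup_s|\nabla w^k(s)|_{L^2}$ and $\int_0^T|\nabla w^k|^2_{H^1}$) times a factor that is uniformly integrable along the sequence; Vitali's theorem then gives $L^2(\Omega)$-convergence to $0$. For the stochastic convolution $\int_0^t e^{(t-s)(r_u\DeltaA+\gamma I)}(u_n^k(s)-u_n(s))\,d\mathcal W_1(s)$ the Burkholder--Davis--Gundy inequality \eqref{HSnorm} (used exactly as for the stochastic term in Claim~\ref{claim1}) produces a contribution $C\int_0^s\EE[\sup_{r\le\tau}|u_n^k(r)-u_n(r)|^2_{L^2}]\,d\tau$. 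Setting $\phi_k(s):=\EE[\sup_{0\le r\le s}|u_n^k(r)-u_n(r)|^2_{L^2}]$ gives $\phi_k(s)\le a_k+C\int_0^s\phi_k(\tau)\,d\tau$ with $a_k\to0$, whence $\phi_k(T)\le a_k e^{CT}\to0$ by Gronwall. Since any subsequence of an arbitrary convergent input sequence admits such a sub-subsequence, the full sequence converges, which proves continuity of $\MT_n$ on $\CC_R$.

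The hard part is the mismatch between the quadratic chemotactic nonlinearity and the mere $L^2(\Omega)$-integrability available on $\CC_R$: the naive bound for, say, the piece containing $\Div(\xi\nabla w^k)$ produces a product of random factors whose combined integrability exceeds what is at hand. This is overcome by: (i) using the cut-offs to turn the $v$-dependent factors and the $H^{-\ep}$-part of the $\xi$-dependent factor into deterministic constants $C(n)$; (ii) choosing the interpolation exponents in \eqref{spimp} so that only an arbitrarily small power of $|\xi|_{L^2}$ enters, which boosts the integrability of $\MT_n\xi^k$; and (iii), where this still does not suffice, a stopping-time localization to the set on which $|\xi^k|_{L^2}$, $|\nabla v^k|_{L^2}$ and $\int_0^\cdot|\nabla v^k|^2_{H^1}$ stay below a level $K$ --- on which all ``error'' factors become genuinely small --- together with a tail estimate that is uniform in $k$ by Chebyshev's inequality and the uniform bound of Claim~\ref{claim1}(a). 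A minor additional point is that in the telescoping a cut-off evaluated at $v$ must dominate a norm of $v^k$ (and conversely); this is handled by $|\nabla v^k(s)|_{L^2}\le|\nabla v(s)|_{L^2}+\sup_r|\nabla w^k(r)|_{L^2}$ on the relevant support, the extra term tending to $0$ $\PP$-a.s.
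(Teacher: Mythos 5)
Your proposal is correct in substance but takes a genuinely different route from the paper, whose proof of Claim \ref{continuity} establishes the quantitative estimate \eqref{equ:xi12} for arbitrary $\xi^1,\xi^2$ by decomposing the Duhamel difference into $I_1,I_2,I_3$ in \eqref{sub.u}, exploiting the Lipschitz bounds \eqref{Psi1}--\eqref{equ:Psi3} for the cut-offs, the smoothing/interpolation estimates \eqref{multiply1}--\eqref{multiply2} and \eqref{above.1}--\eqref{esti psi 3}, the linear estimates for $v^1-v^2$ (including \eqref{nabla_v_1}), and a case analysis via the stopping times $\tau^u_j,\tau^o_j$. Your analytic core is the same (same kernel $r$, same interpolation, same cut-off controls, same $v$-difference estimate, BDG for the noise), but at the top level you prove sequential continuity at a fixed $\xi\in\CC_R$: pass to an a.s.-uniformly convergent subsequence, show each telescoped piece is a deterministic constant $C(n,T)$ times an a.s.-vanishing factor times a uniformly integrable factor, and conclude by Vitali plus Gronwall; you also treat the stochastic convolution with integrand $u^k_n-u_n$ and close with Gronwall, which is the correct reading of \eqref{ppp1} (the paper's $I_3$ is written with $\xi^1-\xi^2$). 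What your route buys is robustness against the low integrability available on $\CC_R$: the paper's final bound \eqref{esti I1 fin} involves $\EE\sup_s|\xi^1(s)-\xi^2(s)|^{2\ep/\delta}_{L^2}$, a moment of order greater than $2$ that the $\CM^2_\MA$-norm does not control, whereas your soft argument never needs a clean power-type modulus; what it gives up is an explicit modulus of continuity (plain continuity is all that is used later, e.g.\ around \eqref{tn.1}). The one place you must be careful when writing it out is the uniform integrability of the mixed products --- a $k$-dependent small factor multiplied by a fixed unbounded random variable such as $\sup_s|\xi(s)|_{L^2}^{2(1-\delta/\ep)}$ or the analogous $v$-quantities --- but the remedies you list (matching cut-offs to the correct argument via triangle-inequality domination, keeping the residual power of $|\xi|_{L^2}$ small, and truncation with a Chebyshev tail uniform in $k$ coming from Claim \ref{claim1}(a)) are exactly the right tools and play the role that the stopping-time case analysis plays in the paper.
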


\begin{proof}[Proof of Claim \ref{continuity}:]
Let us fix $n\in\NN$ and consider~$\xi^j\in\CM_\MA ^2(0,T;L^2(\CO))$, $j=1,2$. In order to prove
the continuity of $\MT_n$, we will show that there exists a constant $C=C(n)>0$ such that
\begin{equation}\label{equ:xi12}
\|\MT_n(\xi^1)-\MT_n(\xi^2)\|_{M_{\MA}^2(0,T;L^2)}
\le
C
\|\xi^1-\xi^2\|^{2}_{M_{\MA}^2(0,T;L^2)},
\end{equation}
where the $M_{\MA}^2(0,T;L^2(\CO))$-norm is defined by~\eqref{equ:M nor}.

For~$j=1,2$, let $(u_n^j,v^j)=(\MT_n(\xi^j),\Mr(\xi^j))$ be defined by~\eqref{ppp1} with~$\xi$
replace by $\xi^j$.
We define $\Psi_{n,j}^1$, $\Psi_{n,j}^2$, and $\Psi_{n,j}^3$ by~\eqref{equ:Psi} with $\xi$
and~$v$ replaced by $\xi^j$ and~$v^j$, respectively, $j=1,2$. Then it follows from the
definition of~$h^j$ in~\eqref{h12.def} that
\begin{align}\label{Psi1}
|\Psi^1_{n,1}(s)-\Psi^1_{n,2}(s)|
&=
|\psi_n(h^1(\xi^1,s))- \psi_n(h^1(\xi^2,s))|
\le
|\psi'|_{L^{\infty}}\, \frac 1n\,\Big| h^1(v^1,s) - h^1(v^2,s) \Big|
\nonumber\\
&\le
|\psi'|_{L^{\infty}}\, \frac 1n\,\sup_{0 \le r \le s}\Big| \xi^1(r)- \xi^2(r)\Big|_{L^1}.
\end{align}
Next, we have 
\begin{align}\label{equ:Psi2}
|\Psi^2_{n,1}(s)-\Psi^2_{n,2}(s)|
&=
|\psi_n(h^2(v^1,s))- \psi_n(h^2(v^2,s))|
\le
|\psi'|_{L^{\infty}}\, \frac 1n\,\Big| h^2(v^1,s) - h^2(v^2,s) \Big|
\nonumber\\
&\le
|\psi'|_{L^{\infty}}\, \frac 1n\,\sup_{0 \le r \le s}\Big|\nabla v^1(r)-\nabla
v^2(r)\Big|_{L^2}.
\end{align}
By similar calculations we obtain
\begin{align}\label{equ:Psi3}
|\Psi^3_{n,1}(s)-\Psi^3_{n,2}(s)|
&\le
|\psi'|_{L^{\infty}}\, \frac 1n\,\int_0^ s\Big|\nabla v^1(r)-\nabla
v^2(r)\Big|_{H^1}^2\, dr.
\end{align}
The supremum on the right-hand side of~\eqref{equ:Psi2} can be estimated by using the
Burkholder--Davis--Gundy inequality~\eqref{HSnorm.1} as follows
\begin{align*}
&\EE\Big[\sup_{0 \le s \le t} \Big|\nabla v^1(s)-\nabla v^2(s)\Big|^2_{L^2}\Big]
\\
&\le \EE \Big[\sup_{0\le s\le t} \Big( \int_0^s (s-s_1)^{-\frac 12} \, |\xi_1(s_1)-\xi_2(s_1)|_{L^2}\, ds_1\Big)^2\Big]
+\EE\Big[ \sup_{0 \le t \le T}\int_0^t |\nabla (v^1(s)-v^2(s))|^2_{L^2} \,ds \Big]
\\
&\le
4\,T\,\EE\Big[ \sup_{0\le s\le T}
|\xi_1(s)-\xi_2(s)|_{L^2}^2\Big]
+\int_0^t \EE \Big[\sup_{0 \le r \le s}|\nabla (v^1(r)-v^2(r))|^2_{L^2}\Big] \,ds.
\end{align*}
Applying  Gronwall's Lemma yields
\begin{align*}\label{nablav1_2}
\EE \Big[ \sup_{0 \le s \le t}|\nabla (v^1(s)-v^2(s))|_{L^2}^2\Big]
&\le 4T\,\EE \Big[ \sup_{0\le s\le t}  |\xi_1(s)-\xi_2(s)| ^2_{L^2}\Big].
\end{align*}
Finally, the maximum regularity and the \BDG inequality given in \eqref{HSnorm.1} yield
\begin{equation}\label{nabla_v_1}
 \EE \Big[ \int_0^ t  |\nabla v^1(r)-\nabla v^2(r)| ^2_{H^1}\, dr \Big]
\le \EE \Big[\int_0^ t |\xi^1(s)-\xi^2(s)|^2_{L^2}\, ds \Big] \le T\, \EE \Big[\sup_{0\le s\le T} |\xi^1(s)-\xi^2(s)|_{L^2}^2\,\Big] .
\end{equation}
In order to prove~\eqref{equ:xi12}, we estimate the difference $\phi(t) :=
\MT_n \xi^1(t)-\MT_n \xi^2(t) = u^1_n(t)-u^2_n(t)$.
It follows from the variation of constant formula (compare with \eqref{VOCF}) that
\begin{align}\label{sub.u}
\phi(t)
&=\int_0^ t e^{(t-s)(r_u\DeltaA+\gamma I)}\Big[
(\Psi^1_{n,1}(s)\Psi^2_{n,1}(s)\,\Psi^3_{n,1}(s)\,\Div(\xi^1(s)\nabla v^1(s))\notag
\\&\quad \quad
- \Psi^1_{n,2}(s),\Psi^2_{n,2}(s)\,\Psi^3_{n,1}(s)\,\Div(\xi^2(s)\nabla v^2(s))\Big] ds
+\int_0^ t e^{(t-s)(r_u\DeltaA+\gamma I)} \phi(s)d \mathcal W_1(s)\notag
\\
&=
\underbrace{\int_0^ t e^{(t-s)(r_u\DeltaA+\gamma I)}\,
\Div\bigg(\big(\xi^1(s) \Psi^1_{n,1}(s)\, -\xi^2(s) \Psi^1_{n,2}(s)\, \big)\Psi^2_{n,1}(s)\,\Psi^3_{n,1}(s)\, \nabla v^1(s)\bigg)\, ds}_{I_1:=} \notag
\\
&\vspace{-0.14cm}{}+\underbrace{\int_0^ t e^{(t-s)(r_u\DeltaA+\gamma I)}\Psi^1_{n,2}(s)
\Div( \xi^2(s))\Big( \Psi^2_{n,1}(s)\Psi^3_{n,1}(s) \nabla v^1(s)-\Psi^2_{n,2}(s)\,\Psi^3_{n,2}(s)\nabla v^2(s) \Big)\, ds}_{I_2:=}
\notag\\
&\quad+\underbrace{\int_0^ t e^{(t-s)(r_u\DeltaA+\gamma I)} (\xi^1(s)-\xi^2(s))d \mathcal W_1(s)}_{I_3:=}.
\end{align}
Firstly, let us consider $I_1$. Let $\sigma\in(\frac12,\frac 34)$, $\delta\in(0,\frac 12)$ and $\rho=\delta+\frac 12$ be fixed.
It is shown in \cite[p.\ 238]{runst} that
\DEQSZ\label{multiply1}
|\xi_1\xi_2|_{L^2}\le |\xi_1|_{H^{-\delta}}|\xi_2}|_{H^{\rho},\quad \xi_1\in H^{-\delta}(\CO),\, \xi_2\in H^{\rho}(\CO).
\EEQSZ
For any $\ep\in(\frac 12,\frac 34)$, interpolation between $H^{-\ep}(\CO)$ and $L^2(\CO)$
yields 
\DEQSZ\label{multiply2}
 |\xi_1|_{H^{-\delta}}\le  |\xi_1|_{H^{-\ep}}^{\delta/\ep}|\xi_1|_{L^2}^{1-\delta/\ep}\le  |\xi_1|_{L^1}^{\delta/\ep}|\xi_1|_{L^2}^{1-\delta/\ep}, \quad \xi_1 \in L^2(\CO).
\EEQSZ
Proceeding in similar lines as in~\eqref{spimp}, we recall the definition of~$r$ in \eqref{def_r}. Therefore, from the definition of the cut-off
function and the smoothing property of the semigroup, we deduce that
%
%
\begin{align}\label{esti.I2}
\EE \Big[\sup_{0 \le s \le t} |I_1(s)|^2_{L^2}\Big] \notag
&\le \EE \Big[\sup_{0 \le s \le t} \Big|\int_0^ s e^{(s-s_1)(r_u\DeltaA+\gamma I)}\,\Psi^2_{n,1}(s_1)\, \Psi^3_{n,1}(s_1)\,
\nonumber \\
& \quad {} \times \Div((\Psi^1_{n,1}(s_1)\,\xi^1(s_1)-\Psi^1_{n,2}(s_1)\,\xi^2(s_1)) \nabla v^1(s_1))\, ds_1 \Big|^2_{L^2}\Big]
\nonumber \\
&\le \EE \Big[\sup_{0 \le s \le t}
\Big( \int_0^ s r(s-s_1)\, \lk|\Psi^1_{n,1}(s_1)\xi^1(s_1)-\Psi^1_{n,2}(s_1)\xi^2(s_1)
\rk|_{H^{-\delta}}
\nonumber
\\
\nonumber
&\quad {}\times \Psi^2_{n,1}(s_1)\,\Psi^3_{n,1}(s_1) \,
\,|\nabla v^1(s_1)|^2_{H^\rho}\, ds_1\big)^2 \Big]
\\\nonumber
&\le C\, t^{\frac {5-2\delta}4} \EE \Big[\sup_{0 \le s_1 \le t}
   \lk|\Psi^1_{n,1}(s_1)\xi^1(s_1)-\Psi^1_{n,2}(s_1)\xi^2(s_1)
\rk|_{H^{-\delta}}
\\
&\quad \times\Big(\int_0^ t (\Psi^2_{n,1}(s_1))^{\frac{q}{2}}\,(\Psi^3_{n,1}(s_1))^{\frac{q}{2}} \,
\,|\nabla v^1(s_1)|^q_{H^\rho}\, ds_1\Big)^\frac 2q \Big].
\end{align}
%
%
In order to estimate the term involving the integral on the right-hand side of \eqref{esti.I2}, we 
observe that due to interpolation $H^1 \subset H^{\rho} \subset L^2$, we get for $0 \le s_1 \leq t $,
\begin{align*}
|\nabla v^1(s_1)|_{H^{\rho}} \leq C |\nabla v^1(s_1)|^{\theta}_{L^2}|\nabla v^1(s_1)|^{1-\theta}_{H^1},
\end{align*}
with $\theta=1-\rho$ and $C>0$ is the interpolation constant depending on the dimension d and $\rho$.
It follows for $q=\frac 4{1+2\delta}$ 
\begin{align}\label{above.1}
&\lk(\int_0^ t (\Psi^2_{n,1}(s_1))^{\frac{q}{2}}\,(\Psi^3_{n,1}(s_1))^{\frac{q}{2}}|\nabla v^1(s_1)|^{q}_{H^{ \rho}}\, ds_1\rk)^\frac 2q
\notag\\
&\le 
C^2 \sup_{0 \leq s \leq t}\Big(\Psi^2_{n,1}(s_1)) |\nabla v^1(s_1)|_{L^2}^{2 \theta}
\Big)\Big(\int_0^t (\Psi^3_{n,1}(s_1))^{\frac{q}{2}} |\nabla v^1(s_1)|_{H^1}^{(1-\theta)q}\,ds_1
\Big)^\frac 2q
\notag\\
& \leq \theta\,C^{\frac{2}{\theta}}\sup_{0 \leq s \leq t}\Big(\Psi^2_{n,1}(s_1)) |\nabla v^1(s_1)|_{L^2}^{2 \theta}
\Big)^{\frac{1}{\theta}}
+ (1-\theta) \Big(\int_0^t (\Psi^3_{n,1}(s_1))^{\frac{q}{2}} |\nabla v^1(s_1)|_{H^1}^{2}\,ds_1
\Big)^\frac{2(1-\theta)}{q}
\notag\\
& \leq (1-\rho)C^{\frac{2}{1-\rho}}\sup_{0 \leq s \leq t}\Big((\Psi^2_{n,1}(s_1))^{\frac{1}{1-\rho}} |\nabla v^1(s_1)|_{L^2}^{2}
\Big)
+ \rho \Big(\int_0^t (\Psi^3_{n,1}(s_1))^{\frac{q}{2}} |\nabla v^1(s_1)|_{H^1}^{2}\,ds_1
\Big).
\end{align}
Using the definition of $\Psi^2_{n,1}$, we obtain
\begin{align*}
\Psi^2_{n,1}(s_1)=
\begin{cases}
1
&\mbox{ if } \sup_{0\le s_1\le s} |\nabla v^1(s_1)|_{L^2}^2 \le n,
\\[2ex]
\in (0,1)
&\mbox{ if } \sup_{0\le s_1\le s} |\nabla v^1(s_1)|_{L^2}^2 \in (n,2n),
\\[2ex]
0 &\mbox{ otherwise}.
\end{cases}
\end{align*}
Therefore,  $\Psi^2_{n,1}(s_1)|\nabla v^1(s_1)|_{L^2}^{2} \leq 2n $. This eventually implies that
\begin{align}\label{esti psi 2}
(\Psi^2_{n,1}(s_1))^{\frac{1}{1-\rho}} |\nabla v^1(s_1)|_{L^2}^2
\leq \Psi^2_{n,1}(s_1) |\nabla v^1(s_1)|_{L^2}^2 (\Psi^2_{n,1}(s_1))^{\frac{\rho}{\rho-1}} 
\leq 2n.
\end{align}
In similar manner, using the definition of $\Psi^3_{n,1}$, one obtain that
\begin{align}\label{esti psi 3}
\int_0^t (\Psi^3_{n,1}(s_1))^{\frac{q}{2}}|\nabla v^1(s_1)|_{H^1}^2ds_1 \leq 2n.
\end{align}
Using \eqref{esti psi 2}-\eqref{esti psi 3} in \eqref{above.1} we obtain
\begin{align*}
&\lk(\int_0^ t  (\Psi^2_{n,1}(s_1))^{\frac{q}{2}}\,(\Psi^3_{n,1}(s_1))^{\frac{q}{2}}|\nabla v^1(s_1)|^{q}_{H^{ \rho}}\, ds_1\rk)^\frac 2q
 \leq 2n\, (1-\rho)C^{\frac{2}{1-\rho}}
+ 2n\,\rho. 
\end{align*}
We now estimate the term involving the supremum on the right-hand side of~\eqref{esti.I2}.
Let $$\tau_j^u:=\inf_{s>0}\{|\xi^j(s)|_{L^1}\ge n\} \wedge T,
\qquad
\tau_j^o:=\inf_{s>0}\{|\xi^j(s)|_{L^1}\ge 2n\}\wedge T,\,\,j=1,2.$$
We distinguish the following cases:
\begin{align}\label{above.2}
|\xi^1(s)\Psi_{n,1}^1(s)-\xi^2(s)\Psi_{n,2}^1(s)|_{H^{-\delta}}
&\le
\begin{cases}
(2n)^{\delta/\ep} |\xi^1(s)-\xi^2(s)|^{1-\delta/\ep}_{L^2} 
&\mbox{ if } 0\le s\le \ul{\tau}^u,
\\[2ex]
2^{\delta/\ep}n^{\delta/\eps-1} |\psi'|_{L^{\infty}}  
|\xi^1(s)|^{1-\delta/\ep}_{L^2} &
\\[2ex]
\quad \quad \times \sup_{0 \leq r \leq s}|\xi^1(r)-\xi^2(r)|_{L^1}&
\\[2ex]
\quad + (4n)^{\delta/\ep} |\xi^1(s)-\xi^2(s)|^{1-\delta/\ep}_{L^2} 
&\mbox{ if }\ul{\tau}^u < s \le \ol{\tau}^o,
\\[2ex]
0 &\mbox{ if }s>\ol{\tau}^o,
\end{cases}
\end{align}
where $\ul{\tau}^u:= \min(\tau_1^u,\tau_2^u)$ and
$\ol{\tau}^o:=\max(\tau_1^o,\tau_2^o)$.
Elementary calculations reveal that
\begin{align}\label{esti int.r}
\Big( \int_0^s (r(s-s_1))^{\frac{q}{q-1}} ds_1 \Big)^{\frac{2(q-1)}{q}}
\leq 2 \Big(\dfrac{3-2s}{1-2s} \Big) 
\Big(s^{\frac{3-2\delta}{2}}+s^{\frac{1-2\delta}{2}}
\Big).
\end{align}
Substituting~\eqref{above.1} and~\eqref{above.2} into~\eqref{esti.I2}, using \eqref{esti int.r},
and taking into account the fact that $\ul{\tau}^u \leq T$, we obtain 
\begin{align}\label{esti I1 fin}
\EE \Big[\sup_{0 \le s \le t} |I_1(s)|^2_{L^2}\Big]
&\leq 2 C\,n\bigg(\dfrac{3-2\delta}{1-2 \delta}\bigg)\Big( (1-\rho)C^{\frac{2}{1-\rho}}
+ \rho \Big)(t^{\frac{3-2\delta}{2}}+t^{\frac{1-2\delta}{2}})
\notag\\
& \quad \times
\Big\{ \EE\Big[ \sup_{0 \le s \le  \ul{\tau}^u}  (2n)^{2\delta/\ep} |\xi^1(s)-\xi^2(s)|^{2(1-\delta/\ep)}_{L^2}\Big]
\notag\\
& \quad +2\EE \Big[\sup_{ \ul{\tau}^u \le s \le \ol{\tau}^o}
\Big[ 2^{2\delta/\ep} n^{2(\delta/\ep-1)} |\psi'|_{L^{\infty}}^2 |\xi^1(s)-\xi^2(s)|^2_{L^1}  |\xi^1(s)|^{2(1-\delta/\ep)}_{L^2}
\notag\\
& \quad \quad
+ (4n)^{2\delta/\ep} |\xi^1(s)-\xi^2(s)|^{2(1-\delta/\ep)}_{L^2}\Big]\Big] \Big\}
\notag\\
 & \leq 2^{2\delta/\eps} 8C \bigg(\dfrac{3-2\delta}{1-2 \delta}\bigg) \Big( (1-\rho)C^{\frac{2}{1-\rho}}
+ \rho \Big)n^{1+2\delta/\ep}(t^{\frac{3-2\delta}{2}}+t^{\frac{1-2\delta}{2}})
\notag\\
& \quad \times 
\Big\{ \Big(\EE\Big[ \sup_{0 \le s \le  T}  |\xi^1(s)-\xi^2(s)|^{2}_{L^2}\Big]\Big)^{(1-\delta/\ep)}
\nonumber\\
& \quad
+\dfrac{1}{n^2} R^{1-\delta/\eps} \Big(
\EE \Big[\sup_{ 0 \le s \le T} |\xi^1(s)-\xi^2(s)|_{L^2}^{2 \eps/\delta} 
\Big)^{\delta/\eps}\Big\}. 
\end{align}
%
We now treat the term $I_2$.
We fix again $\delta\in(0,\frac 12)$, $\rho=\delta+\frac 12$, and $\ep\in(\frac 12,\frac34)$.
Using the same approach as in the calculation of \eqref{spimp} we obtain
%
\begin{align*} 
\EE \Big[\sup_{0 \le s \le t} |I_2(s)|^2_{L^2}\Big]
& \le
\EE \Big[\sup_{0 \le s \le t} \Big| \int_0^ s e^{(s-s_1)(r_u\DeltaA+\gamma I)}\,\Psi^1_{n,2}(s_1)\,
\\
&\quad \times
\Div( \xi^2(s_1) [ \Psi_{n,1}^2(s_1) \Psi_{n,1}^3(s_1)\, \nabla v^1(s_1)-\Psi_{n,2}^2(s_1) \Psi_{n,2}^3(s_1)\, \nabla v^2(s_1)]\, ds_1 \Big|^2_{L^2}\Big]
\notag
\\\nonumber
& \le
\EE \Big[\sup_{0 \le s \le t} \Big( \int_0^ s r(s-s_1) \Psi_{n,2}^1(s_1)\,
\\
&\quad\times \Big| \xi^2(s_1) [\Psi_{n,1}^2(s_1) \Psi_{n,1}^3(s_1)\, \nabla v^1(s_1)-\Psi_{n,2}^2(s_1) \Psi_{n,2}^3(s_1)\,\nabla v^2(s_1)]\Big|_{L^2}\, ds_1 \Big)^2 \Big] \notag
\\\nonumber
& \le
C  \bigg(\frac{3-2\delta}{1-2 \delta}\bigg)(t^{\frac{3-2\delta}{2}}+t^{\frac{1-2\delta}{2}})\, \EE \Big[\sup_{0 \le s \le t} \Big( \int_0^ s  (\Psi_{n,2}^1(s_1))^q\,
\\
&\quad\times \Big| \xi^2(s_1) [\Psi_{n,1}^2(s_1) \Psi_{n,1}^3(s_1)\, \nabla v^1(s_1)-\Psi_{n,2}^2(s_1) \Psi_{n,2}^3(s_1)\,\nabla v^2(s_1)]\Big|^q_{L^2}\, ds_1 \Big)^\frac 2q \Big]. \notag
\end{align*}
For j=1,2, we define the following stopping times 
 \begin{alignat*}{2}
 \tau_j^u &:= \inf_{s>0}\{|\nabla v^j(s)|_{L^2}\ge n^{1-\rho}\} \wedge T, &\quad
 \tau_j^o &:= \inf_{s>0}\{|\nabla v^j(s)|_{L^2}\ge (2n)^{1-\rho}\} \wedge T
 \\
 \tilde \tau_j^u &:= \inf_{s>0}\{\int_0^s |\nabla v^j(s_1)|_{H^1}^2\, ds_1\ge n^\rho\}\wedge T, &\qquad
 \tilde \tau_j^o &:= \inf_{s>0}\{\int_0^s |\nabla v^j(s_1)|_{H^1}^2\, ds_1\ge (2n)^\rho\}\wedge T,
\\
 \hat \tau_j^u &:= \inf_{s>0}\{\int_0^s |\nabla v^j(s_1)|^q_{H^\rho}\, ds_1\ge n\}\wedge T, &\quad 
 \hat \tau_j^o &:= \inf_{s>0}\{\int_0^s |\nabla v^j(s_1)|^q_{H^\rho}\, ds_1\ge 2n\}\wedge T.
  \end{alignat*}
From \eqref{above.1} it follows that
$
\hat \tau^u_j\ge \ul{\tau}^{u}_{j}:=\min(\tau_j^u,\tilde \tau_j^u)
$
 for $q=\frac 4{1+2\delta}$.
Now,
let us assume that $ 0\le s\le \ul{\tau}^{u}_{j}$. Then we get by \eqref{multiply1} and \eqref{multiply2}
\begin{align*}
& \Big( \int_0^ s  (\Psi_{n,2}^1(s_1))^q\,
 \Big| \xi^2(s_1) [\Psi_{n,1}^2(s_1) \Psi_{n,1}^3(s_1)\, \nabla v^1(s_1)-\Psi_{n,2}^2(s_1) \Psi_{n,2}^3(s_1)\,\nabla v^2(s_1)]\Big|^q_{L^2}\, ds_1 \Big)^{2/q}
\nonumber\\
& \le C \int_0^s \Big((\Psi_{n,2}^1(s_1))^q\,|\xi^2(s_1)|_{L^1}^q
|\nabla v^1(s_1)-\nabla v^2(s_1)|^{q}_{H^\rho}ds_1
\Big)^{2/q} 
\nonumber\\
 &\le C(2n)^{2} \,
 \Big(\sup_{0\le s_1\le s} |\nabla v^1(s_1)-\nabla v^2(s_1)|^{2(1-\rho)}_{L^2}\Big)\,  \lk( \int_0^s |\nabla v^1(s_1)-\nabla v^2(s_1)|^2_{H^1}ds_1\rk)^ {\rho}.
\end{align*}
The Cauchy-Schwarz inequality and estimate \eqref{nabla_v_1} give for $ 0\le s\le \ul{\tau}^{u}_{j}$
\begin{align*}
&\EE \Big[\sup_{0 \le s \le t}\Big( \int_0^ s  (\Psi_{n,2}^1(s_1))^q\,
 \Big| \xi^2(s_1) [\Psi_{n,1}^2(s_1) \Psi_{n,1}^3(s_1)\, \nabla v^1(s_1)-\Psi_{n,2}^2(s_1) \Psi_{n,2}^3(s_1)\,\nabla v^2(s_1)]\Big|^q_{L^2}\, ds_1\Big)^{2/q} \Big]\notag
\\
&\le C(2n)^{2}t^2 \,
\lk(\EE \Big[ \sup_{0\le s \le t} |\xi_1(s)-\xi_2(s)|^{2}_{L^2}\Big] \rk)^{1-\rho}
\lk(\EE \Big[ \sup_{0\le s \le t} |\xi_1(s)-\xi_2(s)|^{2}_{L^2}\Big] \rk)^{\rho}
\\
&\le C(2n)^{2}t^2 \,
\|\xi^1-\xi^2\|_{\CM^2_\MA(0,T;L^2)}^{2}.
\end{align*}
Next,
let us assume that $  \ul{\tau}^{u}_{j}<s\le \ul{\tau}^{o}_{j}:=\max(\tau_j^o,\tilde \tau_j^o)$. Then we get by \eqref{multiply1} and \eqref{multiply2}
\begin{align*}
 &\Big( \int_0^ s  (\Psi_{n,2}^1(s_1))^q\,
 \Big| \xi^2(s_1) [\Psi_{n,1}^2(s_1) \Psi_{n,1}^3(s_1)\, \nabla v^1(s_1)-\Psi_{n,2}^2(s_1) \Psi_{n,2}^3(s_1)\,\nabla v^2(s_1)]\Big|^q_{L^2}\, ds_1\Big)^{2/q}\notag
\\ 
 &\le C(2n)^{2} \Big\{ \sup_{0\le s_1\le s} |\nabla v^1(s_1)-\nabla v^2(s_1)|^{2}_{L^2}
\Big(\int_0^s |\nabla v^1(s_1)|_{H^1}^q ds_1 \Big)^{2/q}\notag
\\ 
 &\quad +
 \sup_{0\le r \le s} 
\Big(\int_0^r |\nabla v^1(s_1)-\nabla v^2(s_1)|_{H^1}^2 ds_1 \Big)
\notag\\ & \quad
+ \sup_{0\le s_1 \le s} |\nabla v^1(s_1)-\nabla v^2(s_1)|_{L^2}^{2(1-\rho)}
\Big(\int_0^s |\nabla v^1(s_1)-\nabla v^2(s_1)|_{H^1}^2 ds_1 \Big)^{2/q}\Big\}.
\end{align*}
Taking expectation, applying the Cauchy-Schwarz inequality, and taking into account the fact
that $\Psi\le 1$, we infer
\begin{align*}
&\EE \Big[\sup_{0 \le s \le t} \Big( \int_0^ s  (\Psi_{n,2}^1(s_1))^q\,
 \Big| \xi^2(s_1)\Big[\Psi_{n,1}^2(s_1) \Psi_{n,1}^3(s_1)\, \nabla v^1(s_1)-\Psi_{n,2}^2(s_1) \Psi_{n,2}
^3(s_1)\,\nabla v^2(s_1)\Big]\Big|^q_{L^2}\, ds_1\Big)^{2/q} \red{\Big]}
\notag\\ 
 & \le C(2n)^{2}t^2 \,
\|\xi^1-\xi^2\|_{\CM^2_\MA(0,T;L^2)}^{2}.
\end{align*}
Taking $C=C(2n)^2t^2$ we achieve
\DEQSZ\label{esti I2 fin}
 \EE \Big[\sup_{0 \le s \le t} |I_2(s)|^2_{L^2}\Big] &\le & C\|\xi^1-\xi^2\|^2_{M_{\MA}^2(0,T;L^2)}.
\EEQSZ
In similar manner, the term $I_3$ can be handled using \eqref{HSnorm}. In particular, we get
\DEQSZ\label{esti I3 fin}
 \ \EE \Big[\sup_{0 \le s \le t} |I_3(s)|^2_{L^2}\Big] \le 4t\|\xi^1-\xi^2\|^2_{M_{\MA}^2(0,T;L^2)}.
\EEQSZ
Combining \eqref{esti I1 fin}, \eqref{esti I2 fin} and \eqref{esti I3 fin} in \eqref{sub.u} appropriately, one can infer~\eqref{equ:xi12}. This completes the proof of {Claim \ref{continuity}}.
\end{proof}

\begin{proof}[Proof of Lemma \ref{lem.exist.n}]
Let $R>0$ be such that $\MT_n$ maps $ \CC_R$ into itself, as given by Claim~\ref{claim1} (a).
By Claim \ref{claim1} (b) we infer that there exists an $\alpha>0$ such that for any $t\in (0,T]$ there exists a constant $K=K(t,\alpha)>0$ with
\DEQSZ\label{compact11}
\EE |\MT_n\xi(t)|^2_{H^\alpha}\le K,\quad \xi\in\CC_R,
\EEQSZ
and a constant $C>0$ such that
\DEQSZ\label{compact22}
\EE\Big[ \sup_{0< t_1<t_2\le T} \frac {|\MT_n\xi(t_2)-\MT_n\xi(t_1)|_{L^2}}{(t_2-t_1)^\delta }\Big]\le C,\quad \xi\in \CC_R.
\EEQSZ
In the sequel we show that there exists a probability space $\MA^\ast$ and  two Wiener processes $\CW_1$ and $\CW_2$ being cylindrical
on $\CH_1$ and $\CH_2$ (let us remind that the spaces are given in Assumption \ref{wiener})
 defined over $\MA^\ast$, and a process $u^\ast_n$ such that $\MT_n(u^\ast)=u^\ast$.
The proof is done in the steps (a) to (f) below.
\medskip

\paragraph{{\bf Step (a):}}
For each $n\in\NN$, let $\{ (u_n^{(k)},v_n^{(k)}):k\in\NN\}$ be a recurrence sequence defined by the
operators~$\MT_n$ and~$\Mr$ via the system~\eqref{ppp1} as follows:
\begin{equation}\label{vsolves}
u_n^{(0)}(t):=0, \ t\in[0,T], \quad
(u_n^{(k)},v_n^{(k)}):=\big(\MT_n(u^{(k-1)}_n),\Mr(u^{(k-1)}_n)\big), \ k\ge1.
\end{equation}

%
First, observe that the laws of the sequence  $\{ (u_n^{(k)},v_n^{(k)}):k\in\NN\}$ is a tight family on
$C_b^0([0,T];L^2(\CO))\times C_b^0([0,T];H^1(\CO))$.
In fact, the set of probability measures $\{\pi_j\circ (u_n^{(k)},v_n^{(k)}):k\in\NN\}$ is
tight, due to Claim \ref{claim1}.
(By $\pi_j$, $j=1,2$, we denote  the projection onto the $j^{th}$ coordinate). To be more precise, by  Claim \ref{claim1} (a) we know that  $u^{(k)}_n\in \CC_R$, and, by Claim  \ref{claim1} (b) we know that \eqref{compact11} and \eqref{compact22} are satisfied. By the Chebyscheff inequality and
the Arzela--Ascoli Theorem (see \cite[Lemma 2.2, p.\ 213]{para}), we infer that the family of probability laws induced by
$\{ \pi_1\circ (u_n^{(k)},v_n^{(k)}):k\in\NN\}$ is tight.
The tightness of the laws of the sequence $\{ v_n^{(k)}:k\in\NN\}$ follows by the following observations.
Invoking Lemma~\ref{L:reg} with $E=H^1(\CO)$ (see Assumption~\ref{assum-1}), $\bar
\alpha=1$, $\gamma=-\frac 12$, $\delta=0$ and $q> 1$, $\beta>0$ sufficiently large so that $\bar
\alpha-\frac 1q <\beta$, we obtain for $\xi\in \CC_R$
\[
\Big\|\int_0^ \cdot e^{ (\cdot-s) (r_uA-\gamma I)}\xi(s)\, ds\Big\|_{C^\beta_b([0,T];H^1)}
\le \|\xi\|_{L^q(0,T;L^2)}.
\]
Secondly, it follows from Corollary \ref{brz:con} (see also \cite[Lemma 3.3]{neerven})
setting $E=H^1(\CO)$, $\delta=0$, $\nu=0$, and $p> 2$, $\beta>0$ such large that
$\beta+\frac 1p<\frac 12$
\[
\EE\Big[\Big\|\int_0^ \cdot e^{ (\cdot-s) (r_vA-\alpha I)}v(s)\,
d\CW_2(s)\Big\|^2_{C^\beta_b([0,T];H^1)}\Big]
\le \EE \Big[\|v\|^2_{L^p(0,T;H^1)}\Big].
\]

Since the marginals of $\{ (u_n^{(k)},v_n^{(k)}):k\in\NN\}$ form tight families, $\{
(u_n^{(k)},v_n^{(k)}):k\in\NN\}$ forms itself a tight family in
$C_b^0([0,T];L^2(\CO))\times C_b^0([0,T];H^1(\CO))$.
Hence, the laws of the sequence  $\{ (u_n^{(k)},v_n^{(k)}):k\in\NN\}$ are a tight family on
$C_b^0([0,T];L^2(\CO))\times C_b^0([0,T];H^1(\CO))$. 

\paragraph{{\bf Step (b):}}
Due to the tightness of the laws of the sequence $\{ (u_n^{(k)},v_n^{(k)}):k\in\NN\}$,
there exists a subsequence, again denoted by $\{ (u_n^{(k)},v_n^{(k)}):k\in\NN\}$, 
and a Borel probability law $\rho^\ast_n$ on $C_b^0([0,T];L^2(\CO))\times C_b^0([0,T];H^1(\CO))$ such that
$$
\Law((u^{(k)}_n,u^{(k)}_n)) \stackrel{k\to\infty}\longrightarrow \rho^\ast_n, 
$$
weakly. Then, by the Skorokhod Lemma, there exists a probability space $\tilde{\mathfrak{A}}_n=(\tilde \Omega_n,\tilde{ \mathcal{F}}_n,\tilde \PP_n)$,
a sequence of $C_b^0([0,T];L^2(\CO))\times C_b^0([0,T];H^1(\CO))$--valued random variables $\{(\tilde u^{(k)}_n,\tilde v^{(k)}_n):k\in\NN\}$ and $(\tilde u_n^\ast,\tilde v^\ast_n)$
such that
\begin{align}
\label{equallaw1}
\Law((\tilde u^{(k)}_n,\tilde v^{(k)}_n)) &= \Law (( u^{(k)}_n, v^{(k)}_n)),
\\
\label{equallaw2}
\Law((\tilde u^{(k)}_n,\tilde v^{(k)}_n)) &= \rho^\ast_n,
\end{align}
and $\tilde \PP_n$--a.s. on $C_b^0([0,T];L^2(\CO))\times C_b^0([0,T];H^1(\CO))$
\[
(\tilde u^{(k)}_n,\tilde v^{(k)}_n)
\longrightarrow (\tilde u^{*}_n,\tilde v^{\ast}_n) \quad \mbox{as} \quad k\to\infty.
\]

\medskip

\paragraph{{\bf Step (c):}}
For each $k\in\NN$, let $\tilde M_{1,n}^{(k)}$ and $\tilde M_{2,n}^{(k)}$ be the processes given by
\DEQSZ\label{qv1}
\\ \nonumber
\tilde  M_{1,n}^{(k)}(t) &:=&\tilde u^{(k)}_n(t)-u_0+\int_0^ t \Big[r_u \DeltaA \tilde u ^{(k)}_n(s)-\chi \psi_n(h^2(\tilde v ^{(k)}_n(s),s))\,\psi_n(h^1(\tilde u ^{(k-1)}_n(s),s))\,\\\nonumber
 &&{}\times  \psi_n(h^3(\tilde v ^{(k)}_n(s),s))\,\Div(\tilde  u ^{(k-1)}_n (s)\,\nabla \tilde v ^{(k)}_n(s))-\gamma\tilde  u ^{(k)}_n(s)\Big]\, ds
 \\
 \label{qv2}
\\ \nonumber
\tilde  M_{2,n}^{(k)}(t) &:=&\tilde v^{(k)}_n(t)-v_0 +\int_0^ t \Big[r_v\DeltaA \tilde v^{(k)}_n(s)-\alpha \tilde v^{(k)}_n(s) \Big] ds 
 -\beta \int_0^ t \tilde u^{(k-1)}_n(s) ds ,
 \EEQSZ
 and let 
 $$
 (\tilde \CG_n^{(k)})_t:=\sigma \lk( \lk\{\,(\tilde u_n^{(k)}(s), \tilde v_n^{(k)}(s),\tilde u_n^{(k-1)}(s)): s\le t, \, \rk\}\rk).
$$
Then,  $\tilde M_{1,n}^{(k)}$ and $\tilde M_{2,n}^{(k)}$  are local martingales over $(\tilde \Omega_n,\tilde \CF_n,\tilde \PP_n)$ with respect to the filtration $ (\CG_{n,t}^{(k)})_{t\in[0,T]}$ and
with quadratic variation
\begin{alignat}{2}
\label{qv11}
\la \tilde M_{1,n}^{(k)}\ra_t
&:=
\la \tilde M_{1,n}^{(k)}(t),\tilde M_{1,n}^{(k)}(t)\ra
& =&
\int_0^ t \sum_{j\in\mathbb{Z}}|\tilde u^{(k),j}_n(s)\psi^{(\delta)}_j|^2\, ds, \quad t\in[0,T],
\\
\label{qv22}
\la \tilde M_{2,n}^{(k)}\ra_t
&:=
\la \tilde M_{2,n}^{(k)}(t),\tilde M_{2,n}^{(k)}(t)\ra
& =&
\int_0^ t \sum_{j\in\mathbb{Z}}|\tilde v^{(k),j}_n(s)\psi^{(\delta)}_j|^2\, ds,\quad t\in[0,T].
\end{alignat}
The representations \eqref{qv11} and \eqref{qv22} {follow} from \eqref{equallaw1} and
{the fact that} $u^{(k)}_n=\MT_n(u^{(k-1)}_n)$; {see \eqref{vsolves}}.
To show that $\la \tilde M_{1,n}^{(k)}\ra$ and $\la \tilde M_{2,n}^{(k)}\ra$ are adapted to  the filtration $ (\CG_{n,t}^{(k)})_{t\in[0,T]}$ we
use the fact that by  \eqref{equallaw1}
for all measurable mapping $\phi_1:C_b^0([0,T];L^2(\CO))\to\RR$,  $\phi_2:C_b^0([0,T];H^1(\CO))\to\RR$, and all $0\le s\le t\le T$ we have
$$\EE \lk[\lk(\tilde M_{1,n}^{(k)}(t)-\tilde M_{1,n}^{(k)}(s)\rk) \phi_1(1_{[0,s)}\tilde u^{(k)}_n) \phi_2(1_{[0,s)}\tilde v^{(k)}_n)\rk]=0,
$$
$$\EE \lk[\lk(\tilde M_{2,n}^{(k)}(t)-\tilde M_{2,n}^{(k)}(s)\rk) \phi_1(1_{[0,s)}\tilde u^{(k)}_n) \phi_2(1_{[0,s)}\tilde v^{(k)}_n)\rk]=0.
$$
In addition, we have for all $w_1,w_2\in H^{\delta}$ for $\delta>1$,
\DEQS
\lqq{\tilde \EE \lk[  \la \tilde M_{1,n}^{(k)}(t),w_1\ra \la \tilde M_{1,n}^{(k)}(t),w_2\ra\rk.}&&
\\
&&{}\lk.- \la\tilde  M_{1,n}^{(k)}(s),w_1\ra \la\tilde  M_{1,n}^{(k)}(s),w_2\ra 
-  \int_s^ t \la \tilde u^{(k)}_n(r),w_1\ra \la\tilde  u^{(k)}_n(r),w_2\ra \, dr\rk]=0,
\EEQS
\DEQS
\lqq{\tilde \EE \lk[  \la\tilde  M_{2,n}^{(k)}(t),w_1\ra \la \tilde M_{2,n}^{(k)}(t),w_2\ra\rk.} &&
\\ &&\lk.{}- \la \tilde M_{2,n}^{(k)}(s),w_1\ra \la \tilde M_{2,n}^{(k)}(s),w_2\ra
-  \int_s^ t \la \tilde v^{(k)}_n(r),w_1\ra \la \tilde v^{(k)}_n(r),w_2\ra \, dr\rk]=0.
\EEQS

\paragraph{{\bf Step (d):}}
Next, {we verify the following statements with the aim to pass the limit}.
\begin{enumerate}
\item There exists a constant $C_n>0$ such that $ \sup_{k\in{\mathbb{N}}} \tilde \EE^n \Big[\sup_{0\le s\le T} |\tilde u_n^{(k)}
(s)|^2_{L^2}\Big]\le C_n$ for all $n\in\NN$ and
\item  for any $r\in (1,2)$ we have
$$\lim_{k\to \infty}\tilde {{\mathbb{E}}}^n\Big[\sup_{0\le s\le T}  |\tilde
 u_n^{(k)}(s)-\tilde u_n^{\ast} |_{L^2}^r\Big] = 0. $$
\end{enumerate}
Let us recall that by our construction which {uses the Skorokhod
Embedding Theorem}, the laws of $ u^{(k)}_n$ and $\tilde  u_n^{(k)}$ on $C^0_b([0,T];L^2(\CO))$ are identical
 for any $k\in{\mathbb{N}}$.
Hence,
$$\EE \Big[\sup_{0\le s\le T}| u_n^{(k)}(s)|^2_{L^2}\Big]= \tilde \EE^n\Big[ \sup_{0\le s\le T} |\tilde u_n^{(k)}(s)
|^2_{L^2}\Big]
$$
 and part (a) easily follows from the fact that $u_n^{(k)}\in\CM_\MA^2(0,T;L^2(\CO))$.
It follows from part (a) that for any $ r<2$  {the process} $ \sup_{0\le s\le T}| \tilde  u_n^{(k)}(s)|_{L^2}^{ r}$ is uniformly integrable with respect to  the probability measure $\tilde{\mathbb{P}}_n$. Since
$ \tilde{\mathbb{P}}_n$-a.s.\ $\tilde  u_n^{(k)}\to
u_n^\ast$ and  $ \sup_{0\le s\le T}| \tilde  u_n^{(k)}(s)|_{L^2}^{r}$ is uniformly integrable
w.r.t. the probability measure $\tilde{ \mathbb{P}}_n$, we obtain part (b) {thanks} to the applicability of the Vitali Convergence Theorem.

\paragraph{{\bf Step (e):}} 
For any $r<2$, since
$ \sup_{0\le s\le T}| \tilde  u_n^{(k)}(s)|_{L^2}^{ r}$ is uniformly integrable with respect to
the probability measure $\tilde{\mathbb{P}}_n$, it is integrable for~$r=2$. Moreover,
\[
\int_0^ t \sum_{j\in\mathbb{Z}}|\tilde u^{(k),j}_n(s)\psi^{(\delta)}_j|^2\, ds
+
\int_0^ t \sum_{j\in\mathbb{Z}}|\tilde v^{(k),j}_n(s)\psi^{(\delta)}_j|^2\, ds
\le
C\,\int_0^ T|\tilde u^{(k)}_n(s)|_{L^2}^2\, ds.
\]
Hence we can invoke the Vitali Convergence Theorem to take the limit in~\eqref{qv1}
and~\eqref{qv2} to define
\[
M_{1,n}^\ast:=\lim_{k\to\infty} M_{1,n}^{(k)} \quad\text{and}\quad
M_{2,n}^\ast:=\lim_{k\to\infty} M_{2,n}^{(k)}.
\]

Next, let
$\tilde {\mathbb{G}}^n=(\tilde {\mathcal{G}}^n_t)_{t\in [0,T]}$
be the
filtration defined by
\begin{equation*} \label{eq:filtrat}
 \tilde  {\mathcal{G}}_t^n= \sigma\left(\sigma( (\tilde u^\ast_n(s),\tilde v^\ast_n(s)) ;\, 0\le s\le t)\cup \mathcal{N}\right),\quad t\in[0,T],
\end{equation*}
where $\mathcal{N}$ denotes the set of null sets of $\tilde {\mathcal{F}}^n$.
Since  $u_n^{(k)}\in L^2(0,T;H^1(\CO))$, the limit is well defined and for $0 \leq s \leq t$
$$\EE \lk[\lk(\tilde M_{1,n}^\ast(t)-\tilde M_{1,n}^\ast(s)\rk) \phi_1(1_{[0,s)}\tilde u^\ast _n) \phi_2(1_{[0,s)}\tilde v^\ast_n)\rk]=0,
$$
and
$$\EE \lk[\lk(\tilde M_{2,n}^\ast (t)-\tilde M_{2,n}^\ast (s)\rk) \phi_1(1_{[0,s)}\tilde u^\ast _n) \phi_2(1_{[0,s)}\tilde v^\ast _n)\rk]=0.
$$
In addition, we have for all $w_1,w_2\in H^{\delta}$ with $\delta>1$,
\DEQS
\tilde \EE \lk[  \la \tilde M_{1,n}^\ast (t),w_1\ra \la \tilde M_{1,n}^\ast (t),w_2\ra- \la \tilde M_{1,n}^\ast (s),w_1\ra \la \tilde M_{1,n}^\ast (s),w_2\ra 
-  \int_s^ t \la \tilde u^\ast _n(r),w_1\ra \la \tilde u^\ast _n(r),w_2\ra \, dr\rk]=0,
\EEQS
\DEQS
\tilde \EE \lk[  \la \tilde M_{2,n}^\ast (t),w_1\ra \la \tilde M_{2,n}^\ast (t),w_2\ra- \la \tilde M_{2,n}^\ast (s),w_1\ra \la \tilde M_{2,n}^\ast (s),w_2\ra
-  \int_s^ t \la \tilde v^\ast _n(r),w_1\ra \la \tilde v^\ast _n(r),w_2\ra \, dr\rk]=0.
\EEQS
By the representation {theorem} \cite[Theorem 8.2]{pratozab}, there exists a filtered probability space
$(\tilde{\tilde{\Omega}}_n,\tilde{\tilde{\CF}}_n,\tilde{\tilde{\PP}}_n)$ with filtration
$(\tilde{\tilde{\CF}}^n_t)_{t\in[0,T]}$,
two cylindrical Wiener
processes $\tilde \CW_1$ and $\tilde \CW_2$ on $\CH_1$ and $\CH_2$ defined on
$\MA^\ast_n:=(\tilde \Omega _n \times \tilde{\tilde{\Omega}}_n,\tilde \CF_n\times \tilde{\tilde{\CF}}_n, 
\tilde \PP_n\times \tilde{\tilde{\PP}}_n)$ and adapted to $(\tilde \CF^n_t\times  \tilde{\tilde{\CF}}^n_t)_{t\in[0,T]}$
such that
$$
M_1(t,\omega,\tilde \omega):=\int_0^ t u_n^\ast(s,\tilde \omega) \, d\tilde \CW_1(s,\tilde \omega,\tilde{\tilde{\omega}}),\quad t\in[0,T],\,(\tilde \omega,\tilde{\tilde{\omega}})\in \tilde \Omega\times \tilde{\tilde{\Omega}},
$$
$$
M_2(t,\omega,\tilde \omega):=\int_0^ t v_n^\ast(s,\tilde \omega)) \, d\tilde \CW_2(s,\tilde \omega,\tilde{\tilde{\omega}}),\quad t\in[0,T],\,(\tilde \omega,\tilde{\tilde{\omega}})\in \tilde \Omega\times \tilde{\tilde{\Omega}}.
$$
In this way, we obtain a pair $(u^\ast_n,v^\ast_n)\in C^0_b([0,t];L^2(\CO))\times C^0_b([0,t];H^1(\CO))$ over $\MA^\ast$ and
two cylindrical Wiener processes $\CW_1$ and $\CW_2$ on $H^{\delta}$ for $\delta>1$ over $\MA^\ast$.

\medskip

\paragraph{{\bf Step (f):}}
In the next step we show that $(u^\ast_n,v^\ast_n)$ over $\MA^\ast$ together with $\CW_1$ and $\CW_2$ is indeed a martingale solution to \eqref{ppp1.n}.
Let us consider the Banach space
${\CM}^2_{\MA^\ast_n} (0,T;L^2(\CO))$ over $\MA^\ast_n$
and the convex subset $\tilde{\CC}_R^{n,\ast}.$
The operator $\tilde{ \MT} _n$ acts now on the space $\tilde{\CC}_R^{n}$ and is defined by system \eqref{ppp1}.
%
%

\medskip

Now, we will show in this step that for any $\ep>0$ we have
(here we denote the expectation over $\MA^\ast_n$ by $\EE^\ast_n$)
\[
\EE_n^\ast  \Big[\sup_{0\le s\le T} | \tilde{ \MT}_n(\tilde u^\ast_n)(s)-\tilde u_n^\ast(s)|_{L^2}\Big]\le \ep. %
\]
\medskip
Fix $\ep>0$.
We can write for all $n\in\NN$, $n>1$,
\DEQSZ\label{sum}
\tilde{ \MT}_n(\tilde u^\ast_n)-\tilde u_n^\ast &=&
\tilde u_n^\ast-\tilde u_n^{(k)}+
\tilde  u_n^{(k)} -
\tilde{\MT}_n (\tilde u_n^{(k-1)}) +\tilde {\MT}_n (u_n^{(k-1)})- 
\tilde{\MT}_n (\tilde u_n^\ast).
\EEQSZ
Note, since $\{\tilde u_n^{(k)}:k\in\NN\}$ converges to $\tilde u_n^\ast $ $\tilde \PP$--a.s.\
there exists a number $K_0$ such that
\DEQSZ\label{tn.01}
\EE^\ast_n \Big[ \sup_{0\le s\le T} |\tilde u_n^\ast(s) -\tilde u_n^{(k)}(s)|^r_{L^2}\Big]
&\le& \frac \ep 3, \quad \forall k\ge K_0.
\EEQSZ
Secondly, we know by \eqref{equallaw1}, \eqref{equallaw2}, and the construction of the sequence $\{\tilde u_n^{(k)}:k\in\NN\}$ that we have for all $k\in\NN$
\DEQSZ\label{tn.0}
 \EE^\ast_n\Big[\sup_{0\le s\le T} |\tilde u_n^{(k)}(s) -\tilde \MT_n(\tilde u_n^{(k-1)})(s)|^m_{L^2} \Big] &=&
 \EE^\ast_n \Big[ \sup_{0\le s\le T} |  u_n^{(k)}(s) - \MT_n(u_n^{(k-1)})(s)|^m_{L^2}\Big]=0.
\EEQSZ
Let us recall that we have shown that $\MT_n$ is continuous on $\CM^2_{{\MA}}(0,T;L^2(\CO))$.
Hence,  
there exists a $\delta>0$ such that for all $\xi,\eta\in \CM^2_{\MA^\ast}(0,T;L^2(\CO))$ with $\|\xi-\eta\|_{\CM^2_{\MA^\ast}(0,T;L^2)}\le \delta,$
we have
\DEQSZ\label{tn.1}
\|\MT_n(\xi)-\MT_n(\eta)\|^r_{\CM^2_{\MA^\ast}(0,T;L^2)} &\le &\frac \ep 3.
\EEQSZ
In addition, it follows from the identity $\Law(\tilde u_n^{(k)})=\Law(u_n^{(k)})$, the
property
$\tilde u_n^{(k)}\in \CM^2_{\tilde {\tilde{\MA_n}}}(0,T;L^2(\CO))$ for all  $k\in\NN$, and the
convergence of
$\{\tilde u_k^{(k)}:k\in\NN\}$ to $\tilde u_n^\ast $ that there exists a number $N_1>0$ such that
\DEQSZ\label{tn.2}
\EE^\ast_n \Big[\sup_{0\le s\le T}|\tilde u_n^{(k-1)}(s) -
\MT_n  (\tilde u_n^\ast )(s)|_{L^2}\Big]
&\le &\delta,
\EEQSZ
 for all $k\ge K_1$. In particular, applying the triangle inequality to \eqref{sum}, then the estimate \eqref{tn.01}, the identity \eqref{tn.0},  and the estimate 
 \eqref{tn.1} together with \eqref{tn.2}  
gives that for all $k\ge \max(K_0,K_1)$ we have 
\DEQS
\EE^\ast_n \Big[ \sup_{0\le s\le T}|\tilde{\MT}_n \tilde u_n^\ast (s)-\tilde u_n^\ast(s) |_{L^2}\Big] \le \ep. %
\EEQS
Hence, for all $t\in[0,T]$ we have $\PP$--a.s.
$$\tilde{\MT_n}(\tilde u_n^\ast )(t)=\tilde u_n^\ast (t),
$$
which is the assertion. This completes the proof of Lemma \ref{lem.exist.n}.
\end{proof}

\del{Let us remind that $M^2_\MA(L^2(\CO))$ is the space of all progressively measurable processes defined over the probability space
$\mathfrak{A}$. Besides, over the probability space $\mathfrak{A}$ are also the Wiener processes $\mathcal{W}_1$ and $\mathcal{W}_2$ defined. Let $\CH_1$ be the Hilbert space given by $ g\in L^2(\CO)$ such that $ \sum_{k\in\ZZ}\la f,\psi_k\ra ^2<\infty$ and $\la f,\psi_k\ra=\la f,\psi_{-k}\ra$, $k\in\NN$. Now, let
$\mathfrak{C}:=C_b^0([0,T];L^2(\CO))\times C_b^0([0,T];\CH_1)\times C_b^0([0,T];\CH_2)$
and let
 $\CP(\MC)$ be the space of probability measures on  given by laws of all triplets $(\xi,\mathcal{W}_1,\mathcal{W}_2)$, such that $\xi\in \CM^2_\MA(L^2(\CO))$ and $\mathcal{W}_1$, $\mathcal{W}_2$ are the given  Wiener processes on $\CH_1$ and $\CH_2$ over $\MA$.

For any $n\in\NN$ the operator $\MT_n$ induces an operator, denoted by  $\CT_n$, on the space of probability measures $\CP(\mathfrak{C})$. In particular, let $\chi\in \CP(\mathfrak{C})$. Then, there exists  a probability space $\tilde {\mathfrak{A}}$, given by  $\tilde{ \mathfrak{A}}=(\tilde \Omega, \tilde \CF,\tilde \PP)$, where $\tilde\Omega=\mathfrak{C}$, $\tilde{\CF}=\mathcal{B}(\mathfrak{C})$, and $\PP=\chi$, such that the projection on the first variable, i.e. $\pi_1(\omega)$, $\omega\in\tilde \Omega$, is a stochastic process, the projection on the second variable i.e. $\pi_2(\omega)$, $\omega\in\tilde \Omega$, is a $\CH_1$--valued Wiener process, and the projection onto the third variable, i.e.\ $\pi_3(\omega)$, $\omega\in\tilde \Omega$, is a $\CH_2$--valued Wiener process.
Let us denote $\tilde \xi(\omega)=\pi_1(\omega)$, $\tilde{\mathcal{W}}_1(\omega)=\pi_2(\omega)$, and $\tilde W_2(\omega)=\pi_3(\omega)$.
Next, let
$\tilde {\mathbb{F}}=(\tilde {\mathcal{F}}_t)_{t\in [0,T]}$
be the
filtration defined by
\begin{equation} \label{eq:filtrat}
 \tilde  {\mathcal{F}}_t= \sigma\left(\sigma( (\tilde \xi(s),\tilde{\mathcal{W}}_1(s), \tilde{\mathcal{W}}_2(s))) ;\, 0\le s\le t)\cup \mathcal{N}\right),\quad t\in[0,T],
\end{equation}
where $\mathcal{N}$ denotes the set of null sets of $\tilde {\mathcal{F}}$.
By similar consideration as done in \cite{reactdiff} one can show that $\tilde \xi$ is indeed a process
belonging to $\CM_{\tilde {\MA}}^2(0,T;L^2(\CO))$, i.e., a progressively measurable process over $\tilde {\mathfrak{A}}$
with respect the filtration $( \tilde  {\mathcal{F}}_t)_{t\in[0,T]}$. The Wiener processes $\tilde{\mathcal{W}}_1$ and $\tilde{\mathcal{W}}_2$ are two
independent Wiener processes satisfying Assumption \ref{wiener}.
Now we can define  $\CT_n(\chi)$ by
$$
\CT_n (\chi)=\Law( (\MT_n(\tilde \xi),\tilde{\mathcal{W}}_1,\tilde{\mathcal{W}}_2)).
$$
We will show in the next step that the operator $\CT_n$ is a compact operator on $\CP(\mathfrak{C})$.

\begin{claim}
For any $n\in\NN$,
there exists a bounded convex subset $\CM\subset \CP(\mathfrak{C})$ on which the operator $\CT_n$ is continuous and compact.
\end{claim}

\begin{proof}
{\bf Let
$$\CM:=\lk\{ \CP(\mathfrak{C}): \int_{\omega\in \MC} 
|\pi_1 (\omega) |^2_{C_b^2}\, \chi(d\omega)\le R\rk\}.
$$
Then, $\CM$ is convex, and  by the norm ... a convex subspace of ..}
By the Prohorov Lemma and the estimate \eqref{compact1} and \eqref{compact2} it is straightforward to verify that the operator
$\CT_n$ maps $\CM$ into a compact subset of $\CM$.

\del{In Claim \ref{claim1}-(a) we have shown that for any $n\in\NN$ there exists a convex bounded map $\MC$ given by a ball of radius $R>0$ such that  $\mathfrak{T}_n$ maps $\MC$ into $\MC$. By the Ascoli theorem (see \cite[p.\ 71, Lemma 1]{simon}) we know that $C_b^\beta([0,T];W^\alpha_2(\CO))\hookrightarrow C_b^0([0,T];L^2(\CO))$ compactly. Now, Claim- \ref{claim1}-(b), and the Prohorov Theorem shows that $\MT_n$ maps bounded subsets into compact subsets of $\CM_\MA ^2(0,T;L^2(\CO))$.
Finally, Claim \ref{continuity} gives that $\MT_n$ is continuous on $\CC$.}
\end{proof}

In particular, we will show that for any $\ep>0$ there exists a number $R>0$ and constants $\alpha,\sigma>0$, $\delta\in\RR$,
such that
$$
\PP\lk(  v^{(n)}\not \in K_R \rk) \le \ep,
$$
where
$$K_R:=\lk\{ \xi \in\mathbb{X}: |\xi|_{\mathbb{X}'}
\le R\rk\}.
$$
Since the embedding   $\mathbb{X}'\hookrightarrow \mathbb{X}$ is compact,
it follows that $K_R$ is compact in $\mathbb{X}$. Secondly, by item (b)
for any $v\in\mathcal{X}$,
\DEQS
\EE \psi(|\mathcal{V}(v)|_{\mathbb{X}'})\le C.
\EEQS
In particular, since $v^{(n)}\in\mathcal{X}$ and $v^{(n)}=\mathcal{V}(v^{(n-1)})$, it follows from assumption (b), that $v^{(n)}\in\mathcal{X}$, $n\in\NN$.
By the Chebycheff inequality it follows that the laws of
the sequence  $\{v^{(n)}:n\in\NN\}$ are tight on $\mathbb{X}$. Due to the fact that every Radon measure is tight and $\Law(W^{(n)})=\Law(W)$,
the laws of the sequence $(W^{(n)}):n\in\NN\}$  are tight on
$\mathbb{D}([0,T];H_1)$. Since the  marginal distributions of $\{(v^{(n)},W^{(n)}):n\in\NN\}$ are tight, we can conclude the  tightness of
the  whole sequence.

\begin{proof}
[Proof of Lemma \ref{lem.exist.n}:]
Now we can start with the proof of Lemma \ref{lem.exist.n}.
By a stochastic Schauder Theorem (see \cite{mejonas}), for any $n\in\NN$, there exists
a martingale solution  $(\mathfrak{A}_n, (W^n_1,W^n_2),(u_n,v_n))$, where $\mathfrak{A}_n$ is a filtered probability space denoted by $\mathfrak{A}_n=(\Omega_n,\CF_n,(\CF_t^n)_{t\in[0,T]}, \PP_n)$, $(W^n_1,W^n_2)$ are two independent Wiener processes,
 and  $(u_n,v_n)$ is a solution  to the system \eqref{ppp1.n} such that $(u_n,v_n)$  belongs $\PP_n$--a.s.\ to
$C_b^0([0,T];L^2(\CO))\times C_0^b([0,T];H^1(\CO))$.
\end{proof}
}

\item
In this  step we will show that the solution $(u_n,v_n)$ is non--negative.
For being short, we omit tedious steps and give a brief outline of the proof. For more details, one can follow e.g.  Theorem 2.3 in \cite{zab}, or Section 2.6 of \cite{roeckner}.
\begin{claim}\label{claim nonneg}
For any $n\in\NN$ the solutions $u_n$ and $v_n$ to the system \eqref{ppp1.n} are non--negative.
\end{claim}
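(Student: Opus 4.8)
The plan is to run the standard non-negativity argument for parabolic SPDEs — testing the equation against a smooth convex approximation of the negative part — taking advantage of the fact that the chemotactic drift $-\chi\psi_n(\cdots)\Div(u_n\nabla v_n)$ is in conservation (divergence) form. The key structural observation is that, after writing $\Div(u_n\nabla v_n)=\nabla u_n\cdot\nabla v_n+u_n\Delta v_n$, the first equation of \eqref{ppp1.n} is a \emph{linear} equation for $u_n$ with zero source and coefficients depending only on $v_n$; consequently the sign of $v_n$ is irrelevant for proving $u_n\ge0$. Once $u_n\ge0$ is known, the second equation of \eqref{ppp1.n} is linear in $v_n$ with the non-negative source $\beta u_n$ and non-negative initial datum $v_0$, so $v_n\ge0$ follows by the same device. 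Throughout we work on the space $\mathfrak{A}_n$ of Lemma \ref{lem.exist.n} and use that $\PP_n$-a.s. $u_n\in C^0_b([0,T];L^2(\CO))\cap L^2(0,T;H^1(\CO))$ and $v_n\in C^0_b([0,T];H^1(\CO))\cap L^2(0,T;H^2(\CO))$ (the last two from the construction of the solution to \eqref{ppp1} and standard parabolic regularity).

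Fix $\epsilon>0$ and let $j_\epsilon\in C^2(\RR)$ be the usual quadratic--linear smoothing of $r\mapsto r^-:=\max(-r,0)$: convex, $j_\epsilon\equiv0$ on $[0,\infty)$, $j_\epsilon(0)=j_\epsilon'(0)=0$, $-1\le j_\epsilon'\le0$, $0\le j_\epsilon''\le1/\epsilon$ with support in $(-\epsilon,0)$, $j_\epsilon(r)\uparrow r^-$, $j_\epsilon'(r)\to-\mathbf 1_{\{r<0\}}$ as $\epsilon\downarrow0$, and — the two facts we really exploit — $r^2j_\epsilon''(r)\le\epsilon$ and $|H_\epsilon(r)|\le\epsilon/2$ for all $r$, where $H_\epsilon(r):=\int_0^r\sigma j_\epsilon''(\sigma)\,d\sigma=rj_\epsilon'(r)-j_\epsilon(r)$. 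The functional $\Phi_\epsilon(w):=\int_\CO j_\epsilon(w)\,dx$ belongs to $C^2(L^2(\CO))$, so the It\^o formula in $L^2(\CO)$ applies to $t\mapsto\Phi_\epsilon(u_n(t))$ (the drift of $u_n$ lies in a suitable negative-order space by the stated regularity, the product $u_n\nabla v_n$ being controlled via $H^1(\CO)\hookrightarrow L^\infty(\CO)$ in dimension one). Using the Neumann boundary conditions, integrating the chemotactic contribution by parts twice — $\int_\CO j_\epsilon'(u_n)\Div(u_n\nabla v_n)\,dx=-\int_\CO u_nj_\epsilon''(u_n)\nabla u_n\cdot\nabla v_n\,dx=-\int_\CO\nabla H_\epsilon(u_n)\cdot\nabla v_n\,dx=\int_\CO H_\epsilon(u_n)\Delta v_n\,dx$ — and using $\sum_k(\lambda^1_k)^2\psi_k^2\equiv\gamma$ for the It\^o correction, the formula produces a mean-zero local martingale plus four drift terms: (i) $-r_u\int_\CO j_\epsilon''(u_n)|\nabla u_n|^2\,dx\le0$; (ii) $-\chi\,(\text{cut-offs})\int_\CO H_\epsilon(u_n)\Delta v_n\,dx$; (iii) $\gamma\int_\CO j_\epsilon'(u_n)u_n\,dx$; (iv) $\tfrac\gamma2\int_\CO j_\epsilon''(u_n)u_n^2\,dx$. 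Localising by $\tau_R:=\inf\{t:\|u_n(t)\|_{L^2}+\int_0^t\|v_n(s)\|_{H^2}^2\,ds\ge R\}\wedge T$ (so the local martingale is a genuine martingale and $\int_0^{\cdot\wedge\tau_R}\|\Delta v_n\|_{L^2}\,ds$ is bounded), taking $\EE^n$ and using $\Phi_\epsilon(u_0)=0$ (as $u_0\ge0$), then letting $\epsilon\downarrow0$ (dominated convergence: (i)$\le0$, (ii)$=O(\epsilon)$ by $|H_\epsilon|\le\epsilon/2$ and $v_n\in L^2(0,T;H^2)$, (iv)$=O(\epsilon)$ by $u_n^2j_\epsilon''(u_n)\le\epsilon$, and $j_\epsilon'(u_n)u_n\to u_n^-$), and then $R\to\infty$ ($\tau_R\uparrow T$ $\PP_n$-a.s.), I arrive at
\[
\EE^n\!\int_\CO u_n(t)^-\,dx\ \le\ \gamma\int_0^t\EE^n\!\int_\CO u_n(s)^-\,dx\,ds,\qquad t\in[0,T].
\]
By Gronwall's lemma $\EE^n\int_\CO u_n(t)^-\,dx=0$ for every $t\in[0,T]$, whence $u_n\ge0$ a.e. on $\CO\times[0,T]$ $\PP_n$-a.s., using continuity of $t\mapsto u_n(t)$ in $L^2(\CO)$.

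With $u_n\ge0$ in hand, I would repeat the computation for $t\mapsto\Phi_\epsilon(v_n(t))$ using the second equation of \eqref{ppp1.n}. Now there is no divergence-form term, the diffusion contributes $-r_v\int_\CO j_\epsilon''(v_n)|\nabla v_n|^2\,dx\le0$, the It\^o correction $\tfrac{\gamma_2}2\int_\CO j_\epsilon''(v_n)v_n^2\,dx$ (with $\gamma_2=\sum_k(\lambda^2_k)^2$) is again $O(\epsilon)$, the source term $\beta\int_\CO j_\epsilon'(v_n)u_n\,dx\le0$ because $j_\epsilon'\le0$ and $u_n\ge0$, and the reaction term $-\alpha\int_\CO j_\epsilon'(v_n)v_n\,dx$ is at most $|\alpha|\int_\CO v_n^-\,dx$. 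Since $v_0\ge0$, the same localisation, the same $\epsilon\downarrow0$ and $R\to\infty$ passage, and Gronwall's lemma give $\EE^n\int_\CO v_n(t)^-\,dx=0$, i.e. $v_n\ge0$ $\PP_n$-a.s., which completes the proof.

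The only genuine obstacle is term (ii): a naive test against $j_\epsilon'(u_n)$ leaves $\int_\CO j_\epsilon(u_n)\Delta v_n\,dx$, which cannot be closed with merely $L^1$-type control of $j_\epsilon(u_n)$. The resolution is the second integration by parts, which replaces $j_\epsilon(u_n)$ by $H_\epsilon(u_n)=u_nj_\epsilon'(u_n)-j_\epsilon(u_n)$, together with the elementary but essential bound $\|H_\epsilon\|_{\infty}\le\epsilon/2$; this makes the chemotactic term vanish in the limit using only $v_n\in L^2(0,T;H^2(\CO))$, which is available for the truncated system. Everything else — the It\^o formula in Hilbert space, the stopping-time localisation of the martingale part, the identity $\sum_k(\lambda^i_k)^2\psi_k^2\equiv\gamma_i$, and Gronwall's lemma — is routine, which is why the full computation can safely be deferred to \cite{zab,roeckner}.
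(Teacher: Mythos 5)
Your argument is correct, but it takes a genuinely different route from the paper's. The paper tests the equation with (a Yosida-type smoothing of) the \emph{square} of the negative part: it sets $G_\delta(r)=g_\delta((r^-)^2)$ with $g_\delta(r)=r^2/(\delta+r)$, applies the It\^o formula to $\phi_\delta(u_n)=\int_\CO G_\delta(u_n)\,dx$, and closes an $L^2$-based Gronwall inequality for $\EE\,|u_n^-(t)|_{L^2}^2$; in that approach the chemotactic term survives as the cross term $\EE\int_0^t\int_\CO |u_n^-|^2|\nabla v_n|^2\,dx\,ds$, which is simply carried along into the Gronwall estimate (and, strictly speaking, needs a further bound on $\nabla v_n$ in $L^\infty$, or absorption into the $|\nabla\sqrt{\cdot}|^2$-type diffusion term, to close — a point the paper does not spell out). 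You instead work with an $L^1$-based functional, smoothing $r\mapsto r^-$ itself, and your key move — the second integration by parts producing $\int_\CO H_\epsilon(u_n)\Delta v_n\,dx$ with $H_\epsilon(r)=rj_\epsilon'(r)-j_\epsilon(r)$ and $\|H_\epsilon\|_\infty\le\epsilon/2$ — makes the chemotactic contribution vanish outright as $\epsilon\downarrow0$, using only $v_n\in L^2(0,T;H^2(\CO))$, which is available for the truncated system. This buys a Gronwall inequality whose right-hand side involves only $\gamma\,\EE\int_\CO u_n^-\,dx$, with no coupling to $\nabla v_n$ at all, and it matches the natural $L^1$ setting of the cell density. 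The price is the slightly more delicate bookkeeping of $j_\epsilon$ (the bounds $r^2j_\epsilon''(r)\le\epsilon$ and $|H_\epsilon|\le\epsilon/2$ are essential and you state them correctly), and you need $u_n\in L^2(0,T;H^1(\CO))$ and $v_n\in L^2(0,T;H^2(\CO))$ to justify the two integrations by parts — both hold for the truncated system by the construction of $\MT_n$ and $\Mr$ and are used elsewhere in the paper. On the one genuinely technical point, the applicability of the It\^o formula to a functional of a mild/variational solution whose drift lives in a negative-order space, you are at the same level of rigor as the paper itself, which also defers the Yosida regularization. Two cosmetic remarks: the first equation of \eqref{ppp1.n} is not literally linear in $u_n$, since the cut-off $\psi_n(h^1(u_n,t))$ depends on $u_n$; this is harmless because you only use that the cut-offs are adapted scalar factors in $[0,1]$. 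And for the $v_n$-equation the reaction term $-\alpha\int_\CO j_\epsilon'(v_n)v_n\,dx$ is in fact nonpositive (since $\alpha\ge0$ and $j_\epsilon'(r)r\ge0$), so even the $|\alpha|\int_\CO v_n^-\,dx$ bound is more than you need.
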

To prove Claim \ref{claim nonneg}, we follow the steps involved in the proof of Theorem 2.6.2 of \cite{roeckner}. For being short and precise, we provide below a brief sketch of the proof.
\begin{proof}
For any $x \in L^2$ we set 
\[x^+:=\max\{x,0\}, \quad \quad x^-:=\max\{-x,0\}.
\]
Let us define for $\delta \in (0,1),$
$$g_\delta(r):=\frac {r^2}{\delta+r},\quad r\in (-\delta,\infty),\quad
\mbox{and} \quad G_\delta(r):=g_\delta((r^-)^2),\quad r\in\RR.
$$
Then, $G_\delta$ belongs to $C^2$ and $G_\delta(r)=G'_\delta(r)=G_\delta ''(r)=0$ for all $r\in[0,\infty)$,
and $|G_\delta'(r)|\le 2r^-$, and $ |G_\delta''(r)|\le 8$ for all $r\in \mathbb{R}$.
Now, let us define $\phi_\delta:L^2(\CO)\to\RR$ by
$$\phi_\delta(u_n)=\int_\CO G_\delta(u_n(x))\, dx,\quad u_n\in L^2(\CO).
$$
Then $\phi_\delta$ is twice Gateau differentiable on $L^2(\CO)$.
We now apply It\^o formula to $\phi_\delta(u_n(t)).$ However, to be precise, one should replace the Laplace operator $\Delta$ by its Yosida approximation $\frac 1 \ep (id-(\ep \DeltaA- id)^{-1})$, apply then the It\^o formula to $\phi_\delta(u_n(t)),$ where $(u_n^{\eps},v_n^{\eps})$ is the solution to system \eqref{ppp1.n} in which the Laplace operator is replaced %
 by its Yosida approximation, and then taking the limit $\ep \to 0$. For convenience we omit the step and apply the
the It\^o formula directly  to $\phi_\delta(u_n(t))$ and get
\begin{align*}
\EE[\phi_\delta(u_n(t))]
&=
 \phi_\delta(u_n(0))+
\EE \Big[\int_0^t \Big \langle D\phi_\delta(u_n(s)),r_u \DeltaA u_n(s)-\chi \Div( u_n(s)\nabla
v_n(s))+\gamma u_n(s)\Big \rangle \, ds \Big] 
\notag\\
&\quad+
\frac 12 \sum_{k\in\mathbb{Z}}\EE\Big[ \int_0^ t\int_\CO G_\delta''(u_n(s,x)) \lk[u_n(s,x)\psi^{(\delta_1)}_k(x)\rk]^2\, dx\, ds\Big].
\end{align*}
As $u_0 \in L^2_{+}(\CO),$ i.e., $u_0 \in L^2(\CO),\,\,u_0 \ge 0$ a.e. So $\phi_\delta(u_n(0))=0.$
Now using Neumann boundary conditions on $(u_n,\,v_n),\,\,G''_\delta(r)=0,\,r \ge 0$,
and integration by parts
we obtain
\begin{align*}
\EE[\phi_\delta(u_n(t))] \le C(r_u,\chi) \EE \Big[ \int_0^t \int_{\CO} | u_n^{-}(s,x)|^2 |\nabla v_n(s,x)|^2 dx\,ds \Big]+(2 \gamma+4) \EE \Big[\int_0^t |u_n^{-}(s)|_{L^2}^2\,ds  \Big].
\end{align*}
Letting $\delta \rightarrow 0$ and using Dominated Convergence Theorem, we have
\begin{align*}
\EE [|u_n^{-}(t)|_{L^2}^2] \le C(r_u,\chi) \EE \Big[ \int_0^t \int_{\CO} | u_n^{-}(s,x)|^2 |\nabla v_n(s,x)|^2 dx\,ds \Big]+(2 \gamma+4) \EE \Big[\int_0^t |u_n^{-}(s)|_{L^2}^2\,ds  \Big].
\end{align*}
Using Grownwall's Lemma we have $  \EE [|u_n^{-}(t)|_{L^2}^2]=0$ and this implies $$u_n^{-}(t)=0\quad \mbox{a.e.}\quad t \in [0,T],\,x \in \CO,\,\, \omega \in \Omega. $$
\dela{
 Taking the limit $\delta\to 0$ gives the assertion.
} Omitting $u_n$, then the positivity of $v_n$ follows again by the same arguments as before (or used by Tessitore and Zabczyk \cite{zab});  the
term $u_n$ can be added by comparison principle (see Kotelenez \cite{KOT}). Another possibility is to follow \cite[Section 2.6]{roeckner}. This completes the proof of Claim \ref{claim nonneg}.
\end{proof}

\item
Here, we will construct a  family of solutions $\{(\bar u_n,\bar v_n):n\in\NN\}$  following the solution to the  original problem until a stopping time $\bar \tau_n$. In particular,  we will introduce for each $n\in\NN$ a new pair of processes $(\bar u_n,\bar v_n)$ following the chemotaxis system up to the stopping time $\bar \tau_n$. Besides, we will have
$(\bar u_n,\bar v_n)|_{[0,\bar \tau_{n})}=(\bar u_{n+1},\bar v_{n+1})|_{[0,\bar \tau_n)}$.

\medskip

Let us start with $n=1$. From Lemma \ref{lem.exist.n}, we know there exists a martingale solution consisting of a probability space $\mathfrak{A}_1=(\Omega_1,\CF_1;(\CF^1_t)_{t\in[0,T]},\PP_1)$, two independent Wiener processes $(\mathcal{W}^1_1,\mathcal{W}^1_2)$
defined over  $\mathfrak{A}_1$, and
 a couple of processes $(u_1,v_1)$ solving $\PP_1$--a.s.\ the system
\[
\begin{cases}
& du_1(t) -\Big[r_u \DeltaA u_1(t) -\chi \,\psi_1(h^1(u_1,t))\psi_1(h^2(v_1,t))\,\psi_1(h^3(v_1,t))
\\
& \hspace{1.1cm} \times \Div( u_1 (t)\,\nabla v_1(t))+\gamma u_1(t)\Big] dt=  u_1(t) d \mathcal{W}^1_1(t),
\\
& dv_1(t) - \Big[r_v \DeltaA v_1(t)-\alpha
 v(t)\Big]\,dt=
 \beta u_1(t)dt+v_1(t) d \mathcal{W}^1_2(t),
\\ 
& (u_1(0),v_1(0)) = (u_0,v_0).
\end{cases}
\]
 Let us define now the stopping times
\begin{align*}
\tau_1^1 &:=\inf \{s\ge 0:|u_1(s)|_{L^1} \ge 1 \}, 
\\
\tau_1^2 &:=\inf \{s\ge 0:|\nabla v_1(s)|_{L^2}^2 \ge 1 \}, 
\\
\tau_1^3 &:=\inf \Big\{s\ge 0:\int_0^ s |\nabla v_1(r)|^2_{H^1}\, dr  \ge 1 \Big\}.
\end{align*}
Put $\tau^\ast_1:=\min(\tau^1_1,\tau_1^2,\tau_1^3)$. Observe, on the time
interval $[0,\tau_1^\ast)$, the pair $(u_1,v_1)$ solves the Chemotaxis system given in
\eqref{chemonoise}. 
Now, we define a new pair of processes $(\bar u_1,\bar v_1)$  following $(u_1,v_1)$  on $[0,\tau_1^\ast)$ and extend this processes
 to the whole interval $[0,T]$ in the following way.
First, we put $\overline{\mathfrak{A}}_1:=\mathfrak{A}_1$ and $\bar {\mathcal{W}}_1^j:=\mathcal{W}_1^j$, $j=1,2$,
let us introduce the processes $y_1$ and $y_2$ 
being a strong solution over $\overline{\mathfrak{A}}_1$ to
\begin{align}\label{eq1}
 y_1(t, u_1(\tau^\ast_1),\sigma) &= e^{t(r_u\DeltaA +\gamma I)} u_1(\tau^\ast_1)
+
 \int_{0}^t  e^{(r_u\DeltaA +\gamma I)(t-s)} y_1(s, u_1( \tau^\ast_1),\sigma)
\,d(\theta_{\sigma} \bar{\mathcal{W}}^1_1)(s)
\end{align}
and
\begin{align}\label{eq2}
 y_2(t, v_1( \tau^\ast _1),\sigma) 
&=
e^{(r_v\DeltaA -\alpha I)t} v_1(\tau^\ast_1)
+ 
\int_{0}^t  e^{(r_v\DeltaA -\alpha I)(t-s)} 
y_2 (s, v_1(\tau^\ast _1),\sigma) \,d(\theta_{\sigma} \bar{\mathcal{W}}^1_2)(s)
\end{align}
%
%
for~$t\ge0$.
Here $\theta_\sigma$ is the shift operator which maps $\mathcal{W}_j(t)$ to $\mathcal{W}_j(t+\sigma)$. 
Since the couple $(u_1,v_1)$  is continuous in $(L^2(\CO),H^1(\CO))$, we know that  $u_1(\tau^\ast_1)$ and $v_1(\tau^\ast_1)$ are well defined random variables belonging $\PP$--a.s.\ to $L^2(\CO)$ and $H^1(\CO)$, respectively.
Since, in addition,  $(e^{t(r_u\DeltaA +\gamma I)})_{t\ge 0}$ and  $(e^{t(r_v\DeltaA -\alpha I)})_{t\ge 0}$ are  analytic semigroups on $L^2 (\CO)$ and  $H^1(\CO)$,
the existence of a unique solution $y_j;\,j=1,2$  over $\overline{\mathfrak{A}}_1$ to \eqref{eq1} and \eqref{eq2}  in $L^2(\CO)$ and  $H^1(\CO)$, respectively,
can be shown by standard methods.
Now, let us define two  processes $\bar u_1 $ and $\bar v_1$
being identical to $u_1$ and $v_1$, respectively,  on the time interval $[0,\tau^\ast_1)$ and
following the heat equation (with lower order terms) with noise, i.e.,   $y_1(\cdot,u_1(\tau^\ast_1),\tau^\ast_1)$ and $y_2(\cdot,v_1(\tau^\ast_1),\tau^\ast_1)$, afterwards.
In particular, let
$$
\bar u_1  (t) = \bcase u_1(t) & \mbox{ for } 0\le t< \tau^\ast_1,\\
y _1(t,u_1(\tau^\ast_1),\tau^\ast_1) & \mbox{ for } \tau^\ast_1\le  t \le T,\ecase
$$
and
$$
\bar v_1  (t) = \bcase v_1 (t) & \mbox{ for } 0\le t< \tau_1^\ast,\\
y_2 (t,v_1(\tau^\ast_1),\tau_1^\ast
)  & \mbox{ for } \tau_1^\ast \le  t \le T.\ecase
$$
Let us now construct the probability space and the processes for the next time interval. First, let $(u_1(\tau^\ast_1),v_1(\tau^\ast_1))$ have probability measure $\mu_1$ on $L^2(\CO)\times H^1(\CO)$. Again, from Lemma \ref{lem.exist.n} we know
there is a martingale solution consisting of a probability space $\mathfrak{A}_2=(\Omega_2,\CF_2,(\CF^2_t)_{t\in[0,T]},\PP_2)$, two independent Wiener processes $(\mathcal{W}_2^1,\mathcal{W}_2^2)$,
 a couple of processes $(u_2,v_2)$ solving $\PP_2$--a.s.\ the system
\[
\begin{cases}
& du_2(t) -\Big[r_u \DeltaA u_2(t) -\chi \,\psi_2(h^1(u_2,t))\psi_2(h^2(v_2,t))\,\psi_2(h^3(v_2,t))
\\ 
& \hspace{1.1cm} \times \Div( u_2 (t)\,\nabla v_2(t))+\gamma u_2(t)\Big] dt=  u_2(t) d \mathcal{W}^2_1(t),
\\
& dv_2(t) - \Big[r_v \DeltaA v_2(t)-\alpha v(t)\Big]\,dt=
 \beta u_2(t)dt+v_2(t) d \mathcal{W}^2_2(t),
\end{cases}
\]
 with initial condition $(u_2(0),v_2(0))$ having law $\mu_1$.
 Let us define now the stopping times on $\mathfrak{A}_2$
\begin{align*}
\tau_2^1 &:=\inf \{s\ge 0:|u_2(s)|_{L^1} \ge 2 \}, 
\\
\tau_2^2 &:=\inf \{s\ge 0:|\nabla v_2(s)|^2_{L^2} \ge 2 \}, 
\\
\tau_2^3 &:=\inf \Big\{s\ge 0:\int_0^ s |\nabla v_2(r)|^2_{H^1}\, dr  \ge 2 \Big\},
\end{align*}
and $\tau^\ast_2:=\min(\tau^1_2,\tau_2^2,\tau_2^3)$.
Let $\overline{\mathfrak{A}}_2:=\overline{\mathfrak{A}}_1\times \mathfrak{A}_2$,
$\overline{\CF}_2:=\overline{\CF}_1\vee \CF_2$,
$$\overline{\CF}^2_t:=\bcase \overline{\CF}^1_t, & \mbox{if} \quad t<\tau_1^\ast,
\\ \CF^2_{t-\tau_1^\ast}, & \mbox{if}\quad  t\ge \tau_1^\ast.
\ecase
$$
Finally, let us put for $j=1,2$
$$\bar{\mathcal{W}}_2^j(t):=\bcase \bar{\mathcal{W}}^j_1(t), & \mbox{if} \quad t<\tau_1^\ast,
\\   \mathcal{W}^j_2({t-\tau_1^\ast})+\bar{\mathcal{W}}^j_1(\tau_1^\ast), & \mbox{if}\quad  t\ge \tau_1^\ast.
\ecase
$$
Now, let us define two  processes $\bar u_2 $ and $\bar v_2$
being identical to $\bar u_1$ and $\bar v_1$, respectively,  on the time interval $[0,\tau^\ast_1)$, being identical to $u_2$ and $v_2$ on the time interval $[\tau^\ast_1,\tau^\ast_1+\tau_2^\ast)$ and
following the heat equation (with lower order terms) with multiplicative noise afterwards.
Let us note, for any initial condition having distribution equal to $u_2(\tau_2^\ast)$ and $v_2(\tau^\ast_2)$ that there exists a strong solutions  $y_1(\cdot ,\cdot ,\tau^\ast_2+\tau^\ast_1)$ and $y_2(\cdot,\cdot ,\tau^\ast_2+\tau^\ast_1)$ of the systems \eqref{eq1} and \eqref{eq2}, respectively, on $\overline{\mathfrak{A}_2}$.
Let for $t\in[0,T]$
$$
\bar u_2  (t) = \bcase \bar u_1(t) & \mbox{ for } 0\le t< \tau^\ast_1,\\
  u_2(t-\tau_1^\ast ) & \mbox{ for }  \tau^\ast_1\le t\le \tau^\ast_1+\tau_2^\ast,\\
y _1(t-(\tau^\ast_1+\tau^\ast_2),u_2(\tau^\ast_2),\tau^\ast_1+\tau^\ast_2) & \mbox{ for } \tau^\ast_2+\tau^\ast_1\le  t \le T,\ecase
$$
$$
\bar v_2  (t) = \bcase v_1 (t) & \mbox{ for } 0\le t< \tau_1^\ast,\\
  v_2(t-\tau_1^\ast ) & \mbox{ for } \tau^\ast_1\le t\le \tau^\ast_1+\tau_2^\ast,\\
y_2 (t-(\tau^\ast_1+\tau^\ast_2),v_2(\tau^\ast_2),\tau_1^\ast+\tau^\ast_2
)  & \mbox{ for } \tau_1^\ast+\tau^\ast_2 \le  t \le T.\ecase
$$
In the same way we will construct for any $n\in\NN$ a probability space $\overline{\mathfrak{A}}_n$, a pair of Wiener processes $(\bar{\mathcal{W}}^1_n,\bar{\mathcal{W}}^2_n)$, over $\overline{\mathfrak{A}}_n$ and a pair of processes $(\bar u_n,\bar v_n)$ starting at $(u_0,v_0)$ and solving system  \eqref{chemonoise} up to time
%
%
%
\DEQSZ\label{stopp_time}\bar \tau_n:=\tau_1^\ast+\cdots +\tau_n^\ast
\EEQSZ and following the heat equation afterwards.
 Besides,  $(\bar u_n,\bar v_n)|_{[0,\bar \tau _{n-1}]}=(\bar u_{n-1},\bar v_{n-1})|_{[0,\bar \tau_{n-1}]}$.
\medskip
\del{
\item In the Step we will establish a uniform estimate of the $L^1$--norm of $\bar u_n$ independent of $n\in\NN$.
\begin{claim}\label{l1est}
For any $p>4$ there exists a constant $C>0$ such that for all $t\in[0,T]$ we have
$$
\EE |\bu(t)|_{L^1}^p\le \EE |u_0|^p_{L^1} +C\,\int_0^t \EE |\bu(s)|_{L^1}^p\, ds, \quad m\in\NN.
$$
\end{claim}

\begin{proof}[Proof of Claim \ref{l1est}:]
We can write by the variation of constant formula
\DEQS
\lqq{ \Big| \int_\CO \bu(t,x)\, dx \Big| \le \Big|\int_\CO (e^{(r_u \DeltaA -\gamma I)t}u_0)(x)\, dx\Big| }
&&
\\
&& +\Big|\int_0^ t\int_\CO  (e^{(r_u \DeltaA -\gamma I)(t-s)})\Div(\bu(s)\nabla \bar v_m(s))(x)\,ds\Big | + C\Big| \int_0^ t e^{(r_u \DeltaA -\gamma I)(t-s)} \bu(s)d\mathcal{W}_1(s)\Big|_{L^2}.
\EEQS
Since $H^{-\frac 12}(\CO)\hookrightarrow L^1(\CO)$, we can write  for $q>2$ and $2q=p$
\DEQS
\EE\Big| \int_0^ t e^{(r_u \DeltaA -\gamma I)(t-s)} \bu(s)d\mathcal{W}_1(s)\Big|^p_{L^2}
&\le &
C(\gamma) \EE  \Big( \int_0^ t (t-s)^{-\frac 12 } \big|\bu(s)\big|^2_{H^{-\frac 12}}\, ds\Big)^\frac p2
\\
\le  C(\gamma,t) \, \EE \Big( \int_0^ t\big|\bu(s)\big|^{2q}_{H^{-\frac 12}}\, ds\Big)^\frac {p}{2q}
&\le & C(\gamma,t) \,  \int_0^ t\EE \big|\bu(s)\big|^{p}_{L^1}\, ds.
\EEQS
Taking into account that $u_0\ge 0$ and that $(e^{-\DeltaA t})_{t\ge0}$ is the heat semigroup, it is easy to see that
$$
\Big|\int_\CO (e^{(r_u \DeltaA -\gamma I)t}u_0)(x)\, dx\Big|\le \lk|u_0\rk|_{L^1}.
$$
In addition, by integration by parts we obtain
$$
\Big|\int_0^ t\int_\CO  (e^{(r_u \DeltaA -\gamma I)(t-s)})\Div(\bu(s)\nabla \bar v_m(s))(x)\,ds\Big| =0, \quad m\in\NN.
$$
Collecting altogether gives the assertion.
\end{proof}
An application of the Grownwall Lemma gives that there exists a constant $C>0$ such that
\DEQSZ\label{l1norm1}
\sup_{0\le t\le T} \EE |\bu(t)|_{L^1}^p\le C(T)\, \EE \lk|u_0\rk|_{L^1}^p, \quad m\in\NN .
\EEQSZ
In a similar  way we can argue that for all $p>4$ there exists constant $C(T)>0$ such that
\DEQSZ\label{l1norm2}
\sup_{0\le t\le T} \EE |\bu(t)|_{H^{-\frac 12}}^p\le C(T)\, \EE \lk|u_0\rk|_{L^1}^p,\quad m\in\NN .
\EEQSZ

From \eqref{l1norm1} and \eqref{l1norm2} we obtain an estimate of the $\sup$-norm in $L^1(\CO)$.
}
\item
In this step, we will give uniform bounds on the processes $\{(\bar u_n,\bar v_n):n\in\NN\}$ by the Lyapunov functional given by
\DEQS 
W (u,v)= \int_\CO \Big(u(x) \log u(x) - \rho u(x)v(x)\Big) dx +
C_1|\nabla v|^2_{L^2} + C_2|v|_{L^2}^2,\quad u\in \LLa(\CO),\,\, v\in H^1(\CO).
\EEQS
In this way we can show the following claim.

\begin{claim}\label{oben}
There exists a constant $C>0$, such that we have for all $n\in\NN$
$$\EE\Big[ \sup_{0\le s\le T} |\bar u_n (s)|_{L^1}^2 \Big] 
\le C,
\quad 
\quad \EE \Big[\sup_{0\le s\le T} |\nabla \bar v_n(s)|_{L^2}^2\Big] \le C,\quad
\EE \Big[\int_0^T |\nabla \bar v_n(s) |_{H^1}^2\, ds \Big]\le C.
$$
\end{claim}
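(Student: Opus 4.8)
The plan is to apply the Itô formula to the Lyapunov functional $W$ evaluated along the processes $(\bar u_n,\bar v_n)$. The crucial point is that, by the construction in Step III, on the interval $[0,\bar\tau_n)$ the pair $(\bar u_n,\bar v_n)$ solves the genuine chemotaxis system \eqref{chemonoise} (with no cut-off active), while after $\bar\tau_n$ it follows the linear heat equations \eqref{eq1}--\eqref{eq2}; for the latter part the nonlinear chemotactic coupling is absent and the estimates are easier. So I would first localize: fix $n$ and work up to $\bar\tau_n\wedge T$, then handle the (linear, hence tame) tail separately, or equivalently run the computation on all of $[0,T]$ noting that the $\Div(u\nabla v)$ term only contributes on $[0,\bar\tau_n)$. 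A further technical preliminary is to regularize: replace $\DeltaA$ by its Yosida approximation as in the proof of Claim~\ref{claim nonneg}, apply Itô to the (then classical) functional, and pass to the limit at the end; I would state this and suppress the details.

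The main computation is to expand $dW(\bar u_n(t),\bar v_n(t))$. The first term $\int_\CO u\log u\,dx$ produces, upon using the first equation of \eqref{chemonoise}: a diffusion contribution $-r_u\int_\CO \frac{|\nabla u|^2}{u}\,dx\le 0$ which we discard; the chemotactic contribution $+\chi\int_\CO \nabla u\cdot\nabla v\,dx = -\chi\int_\CO u\,\Delta v\,dx$ after integration by parts; the $+\gamma u$ term giving $\gamma\int_\CO u\,dx + \gamma|u|_{L^1}$-type terms; and the Stratonovich/Itô noise correction, which is exactly why the $\gamma$ shift was introduced — the It\^o correction from $u\,d\CW_1$ against $\log u$ plus the quadratic-variation term combine with $\gamma u$ so that the noise contributes only a genuine martingale plus controlled lower-order terms (this is the standard cancellation $\int_\CO D^2(u\log u)[u\psi_k^{(\delta)}]^2 = \int_\CO u(\psi_k^{(\delta)})^2$, summing to $\gamma|u|_{L^1}$). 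The cross term $-\rho\int_\CO uv\,dx$ differentiated by the product rule pairs the $\chi\Div(u\nabla v)$ piece against $v$ — integrating by parts this yields $+\rho\chi\int_\CO v\,\Div(u\nabla v) = -\rho\chi\int_\CO u|\nabla v|^2$, which is the term designed to absorb the bad $-\chi\int u\Delta v$ from the entropy once we also use the $v$-equation $r_v\Delta v$ appearing through $-\rho\int u\,\partial_t v$. The terms $C_1|\nabla v|^2_{L^2}$ and $C_2|v|^2_{L^2}$ are differentiated using the second equation: $\frac{d}{dt}|\nabla v|^2_{L^2}$ gives $-2r_v|\Delta v|^2_{L^2} + 2\beta\int\nabla u\cdot\nabla v - 2\alpha|\nabla v|^2$ plus noise and its correction $\gamma_2|\nabla v|^2$-type terms; similarly for $|v|^2_{L^2}$. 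The point of choosing $\rho$, $C_1$, $C_2$ appropriately (following the classical deterministic Keller--Segel Lyapunov argument) is that the indefinite-sign terms $-\chi\int u\Delta v$, $-\rho\chi\int u|\nabla v|^2$, $+2\beta\int\nabla u\cdot\nabla v$ and the negative definite $-r_u\int|\nabla u|^2/u$, $-2r_v|\Delta v|^2_{L^2}$, $-2r_vC_1|\Delta v|^2$ can be arranged — via Young's inequality and the one-dimensional Gagliardo--Nirenberg/Sobolev inequality $|\nabla v|^2_{L^4}\lesssim |\nabla v|_{L^2}|\Delta v|_{L^2}$ — so that what survives is: a negative multiple of $\int_0^t|\nabla \bar v_n|^2_{H^1}\,ds$, a martingale, and terms bounded by $C\int_0^t W(\bar u_n,\bar v_n)\,ds + C|u_0|_{L^1}$ (using also $\LLa(\CO)\hookrightarrow L^1(\CO)$ and $-x\log x\le C$ to control the entropy from below, so that $W$ itself is bounded below up to a constant times $|\nabla v|^2_{L^2}+|v|^2_{L^2}$).

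From the resulting differential inequality
$$
W(\bar u_n(t),\bar v_n(t)) + c\int_0^t|\nabla\bar v_n(s)|^2_{H^1}\,ds \le W(u_0,v_0) + C\int_0^t W(\bar u_n(s),\bar v_n(s))\,ds + M_n(t),
$$
with $M_n$ a local martingale, I would take expectations (after a further localization by the stopping times $\sigma_N=\inf\{t:\,|\bar u_n(t)|_{L^1}+|\nabla\bar v_n(t)|_{L^2}\ge N\}$ to kill the martingale, then let $N\to\infty$ by Fatou), apply Gronwall to get $\sup_{0\le t\le T}\EE\,W(\bar u_n(t),\bar v_n(t)) + c\,\EE\int_0^T|\nabla\bar v_n|^2_{H^1}\le C$ uniformly in $n$, and finally use the Burkholder--Davis--Gundy inequality \eqref{HSnorm.1} on $M_n$ to upgrade this to the $\EE\sup$ bounds; the noise in $W$ is of the form $\int\sigma_k(\bar u_n)$-type and $\langle M_n\rangle_t\lesssim \int_0^t(|\bar u_n|_{L^1}^2 + |\nabla\bar v_n|^4_{L^2})\,ds$, which is controlled by what we already bounded (the initial-data hypothesis $\EE|u_0|_{\LLn}<\infty$, $\EE|\nabla v_0|^2_{L^2}<\infty$ feeds in here). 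Since $|\bar u_n|_{L^1}\le C(1+W + |\nabla v|^2_{L^2}+|v|^2_{L^2})$ — here I use that on $L^1$ one has $|u|_{L^1}\le \int u\log^+u + C \le W + \rho\int uv + \ldots$ together with $\int uv \le |u|_{L^1}^{1/2}\cdots$ handled by Young — the three asserted bounds follow.

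The main obstacle I expect is the bookkeeping in the Lyapunov estimate: getting the signs and the constants $\rho, C_1, C_2$ to line up so that all the cross terms are genuinely absorbed (this is the stochastic analogue of the classical free-energy computation for Keller--Segel, and in 1D the subcritical Sobolev embedding is what makes it work), and making sure the Stratonovich-to-Itô correction terms plus the quadratic variation contributions land exactly as the $\gamma$-shift anticipates rather than producing an uncontrolled positive term. The localization/Yosida-approximation steps are routine but must be mentioned to make the Itô application legitimate given that $u\log u$ is not $C^2$ near $0$ and $\DeltaA$ is unbounded.
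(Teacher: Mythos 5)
Your proposal follows essentially the same route as the paper: split the analysis at $\bar\tau_n$ (Lyapunov functional $W$ with the cross term on $[0,\bar\tau_n]$, the linear heat-equation estimates with the reduced functional afterwards), apply It\^o after Yosida/mollifier regularization, integrate by parts, absorb the indefinite terms $\langle \bar u_n,\Delta \bar v_n\rangle$ and $\langle \bar u_n,\bar v_n\rangle$ by a suitable choice of $\rho, C_1, C_2$, control the stochastic integrals by Burkholder--Davis--Gundy, and close with Gronwall; this is exactly the content of Claim \ref{claim esti} and the subsequent estimates in the paper's proof. The only differences are cosmetic (you invoke the one-dimensional Gagliardo--Nirenberg inequality where the paper closes purely with Young's inequality, and you credit the coercive term $\chi\rho\langle \bar u_n,(\nabla\bar v_n)^2\rangle$ with an absorption that the paper instead performs via $\rho\beta|\bar u_n|^2_{L^2}$ and $C_1 r_v|\Delta\bar v_n|^2_{L^2}$), so the argument is correct in the same sense and at the same level of detail as the paper's.
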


\begin{proof}[Proof of Claim \ref{oben}]
On the time interval $[0,\bar \tau_n ]$ we use the Lyapunov functional $W(\bar u_n,\bar v_n)$ (where $\bar \tau_n$ is defined in \eqref{stopp_time})and on $[\bar \tau_n,T]$
we will use the functional given by
\DEQS 
E (u,v)= \int_\CO u(x) \log u(x)\, dx +
C_1|\nabla v|^2_{L^2} + C_2|v|_{L^2}^2,\quad u\in \LLa(\CO),\,\, v\in H^1(\CO).
\EEQS
We note that $\EE \Big[\sup_{\bar \tau_n \le s \le T}\int_{\CO} \bar u_n(s) \log \bar u_n(s)\Big]$ is bounded below. Hence
$$\EE \Big[ \sup_{0\le s\le T}1_{[\bar \tau_n ,T]}(s) E(\bar u_n (s),\bar v_n (s))\Big]$$
is bounded below. To show $W(\bar u_n(s),\bar v_n(s))$ is bounded below on $[0,\bar \tau_n]$, it suffices to prove $$\EE \Big[
\sup_{0 \le s \le \bar \tau_n}\int_{\CO} \bar u_n(s) \bar v_n(s)  \Big]$$ is bounded below.
First, note that, due to the Young inequality, for any $\ep>0$ there exists $C(\ep)>0$ such that we have
\begin{align*}
\EE\Big[ \sup_{0 \le s \le \bar \tau_n} \int_\CO \bu (s,x) \bar v_n (s,x)\,dx\Big]
&\le  
\EE \Big[\sup_{0 \le s \le \bar \tau_n} |\bu (s)|_{L^1} |\bar v_n (s)|_{L^{\infty}}\Big]
\\
&\le 
C(\ep)  \EE\Big[ \sup_{0 \le s \le \bar \tau_n} |\bu (s)|^2_{L^1}\Big]+ \ep \EE\Big[ \sup_{0 \le
s \le \bar \tau_n} |\bar v_n (s)|_{H^1 }^2\Big].
\end{align*}
Taking $\ep$ small enough, $\EE\Big[\sup_{0 \le s \le \bar \tau_n} |\bar v_n (s)|^2_{H^{1}}\Big]$ can be handled by $\EE\Big[\sup_{0 \le s \le \bar \tau_n}\Big(C_1|\nabla v|^2_{L^2} + C_2|v|_{L^2}^2\Big)\Big]$.
To show the functional $W$ is bounded below, we need to show that there exists a constant $C>0$ such that
\begin{align}\label{esti bar un L1}
\EE\Big[ \sup_{0 \le s \le \bar \tau_n} |\bu (s)|^2_{L^1}\Big] \le C.
\end{align}
It remains to show that there exists a constant $C>0$ such that
\DEQSZ \label{lyp1.1}
\EE\Big[ \sup_{0\le s\le T}1_{[0,\bar \tau_n )}(s) W(\bu (s),\bar v_n (s))\Big]\le C \quad\mbox{and}\quad
\EE \Big[ \sup_{0\le s\le T}1_{[\bar \tau_n ,T]}(s) E(\bu (s),\bar v_n (s))\Big]\le C.
\EEQSZ
Estimate \eqref{lyp1.1} essentially implies \eqref{esti bar un L1} and
\[ \EE \Big[\sup_{0\le s\le T} |\nabla \bar v_n(s)|_{L^2}^2\Big] \le C,\quad
\EE \Big[\int_0^T |\nabla \bar v_n(s) |_{H^1}^2\, ds \Big]\le C.
\]
%
\begin{claim}\label{claim esti}
There exists a constant $C>0$ such that we have for any $n\in\NN$
\begin{align*}
&\EE \Big[\sup_{0\le s\le t\wedge \bar \tau_n }\Big( W(\bar{u}_n (s),\bar{v}_n (s))-W(\bar u_n(0),\bar v_n(0))\Big) \Big]
\notag\\
& \quad + \EE\int_0^ {t\wedge \bar \tau_n } \Bigg( 4 r_u  |\nabla \sqrt{\bar{u}_n (s)}|^ 2 _{L^2}
+(C_1 r_v -(\chi+\rho r_u+\rho r_v +C_1\beta )/ \rho \beta)|\DeltaA \bar{v}_n (s)|_{L^2}^2
\notag\\
&\quad +(C_2 r_v+C_1 \alpha ) |\nabla \bar{v}_n (s)|^2_{L^2} +C_2\alpha  | \bar{v}_n (s)|^2_{L^2}\,%
+{} \frac 12 \rho \beta |\bar{u}_n (s)|^2_{L^2}
+\chi\rho\, \la \bar{u}_n (s), (\nabla \bar{v}_n (s))^2 \ra \Bigg)\, ds \notag\\
&\le  C\Big\{ t \, + \EE  W(\bar u_n (0),\bar v_n (0))
 +C \, \EE\Big[\int_{0}^{t \wedge \bar \tau_n } \Big(\int_\CO |\log(\bar u_n (s,x))+1|\,|\bar u_n (s,x)|\, dx\Big)^2 ds\Big]^{\frac{1}{2}}.
\end{align*}
In particular, there exists a constant $C>0$ such that we have for all $m\in\NN$
\DEQS
\EE W(\bar{u}_n (t),\bar{v}_n (t))=C\Big( t \, + \EE  W(\bar u_n (0),\bar v_n (0))\Big).
\EEQS

%
\end{claim}


%

%
\begin{proof}[Proof of Claim \ref{claim esti}]
Fix $n\in\NN$. We will show the claim by applying the It\^o formula to $W(\bar u_n,\bar v_n)(t)$. To be precise, in equation \eqref{ppp1.n} one should replace before the Laplace operator $\DeltaA$ by its Yosida approximation $\DeltaA_\ep=\frac 1 \ep ((id-\ep \DeltaA)^{-1}-id)$ and the divergence operator by intertwining a mollifier $\phi_\ep$. One can then apply the It\^o formula to $W(\bar u_n^\ep,\bar v_n^\ep)$, where $(\bar u_n^\ep,\bar v_n^\ep)$ are the solutions to system \eqref{ppp1.n} in which the Laplace operator is replaced %
 by its Yosida approximation and then take the limit $\ep \to 0$ (see e.g.\ proof of Lemma 3.2 in \cite{marinelli}). Here, for any $\nu>0$ and $\delta\in\RR$ we know that $|\DeltaA-\DeltaA_\ep|_{L(H^{\delta+\nu},H^{\delta})}\to 0$ as $\ep\to 0$.
We now approximate the divergence operator. Let $\varphi\in C^{(2)}_b(\RR)$ be a function which is compactly supported, $ \int\!\varphi (x)\mathrm {d} x=1$, and $\lim _{\epsilon \to 0}\varphi _{\epsilon }(x)=\lim _{\epsilon \to 0}\epsilon ^{-1}
\varphi (x/\epsilon )=\delta (x)$.
Let $\mbox{div}_\ep(w):= \mbox{div}(\varphi_\ep\star w)$ where $\star$ denotes the convolution. 
Let
\[
\begin{cases}
&d \bar u^\ep_n(t) -
\Big[ r_u \DeltaA_\ep\bar u^\ep_n(t) -\chi \psi_n(h^1(\bar u^\ep_n,t))\,\psi_n(h^2(\bar
v^\ep_n,t))\,\psi_n(h^3(\bar v^\ep_n,t))
\\
&\hspace{1.1cm}
\times \Div_\ep( \bar u^\ep_n(t)\,\nabla\bar  v^\ep_n(t))+\gamma \bar u^\ep_n(t)\Big] dt
= \bar  u^\ep_n(t) d \mathcal W_1(t),
\\
& d \bar v^\ep_n(t) -
\Big[r_v \DeltaA_\ep \bar v^\ep_n(t)-\alpha \bar v^\ep_n(t)\Big]\,dt =\beta \bar u^\ep_n(t)dt+
 \bar v^\ep_n(t)d\mathcal W_2(t),
\\ 
& (\bar u^\ep_n(0),\bar v^\ep_n(0)) = (u_0,v_0).
\end{cases}
\]
Before applying the It\^o-formula we have to reassure that the integrands belong to the appropriate space (see \cite[Theorem 2.4]{brzezniakito} or \cite[Chapter 4.4]{pratozab}).
We note that
$$
W:L^2(\CO)\times H^1(\CO)\to \RR
$$
is well-defined. Due to Lemma \ref{lem.exist.n} for all $n\in\NN$, there exists a constant $C(n)>0$ such that 
\[\EE\Big[\sup_{0\le s\le T} |\bar u^\ep_n(s)|_{L^2}^2\Big]\le C(n), \qquad 
\EE\Big[\sup_{0\le s\le T} |\bar u_n(s)|_{L^2}^2\Big]\le C(n),\]
\[
\EE\Big[\sup_{0\le s\le T} |\nabla\bar  v^\ep_n(s)|_{L^2}^2\Big]\le C(n) \qquad \EE\Big[\sup_{0\le s\le T} |\bar v_n(s)|_{L^2}^2\Big]\le C(n).\] As the integrands satisfy the conditions of Theorem 2.4 in \cite{brzezniakito}, we can apply the It\^o formula to the process $W(\bar u^\ep_n(s), \bar v^\ep_n(s))$.
One can show $W(\bar u_n^\ep,\bar v_n^\ep)(t)\to W(\bar u_n,\bar v_n)(t)$ as $\ep\to 0$ for all $t \in [0,T]$. Indeed, by Claim \ref{claim1}, we know that there exists some $\alpha>0$ and a constant $C>0$ independent of $n\in\NN$ such that $$\EE\Big[\sup_{0\le s\le T} |\bar u^{\ep}_n(s)|_{H^\alpha}^2\Big]\le C ,\qquad 
\EE\Big[\sup_{0\le s\le T} |\bar u_n(s)|_{H^\alpha}^2\Big]\le C.$$
We only deal with the following terms in the expression of $W(\bar u^{\eps}_n, \bar v^{\eps}_n)-W(\bar u_n, \bar v_n)$ namely
\DEQS
\lqq{ \int_\CO ( \log \bar u_n (x)+1) \Big(r_u (\DeltaA_\ep-\DeltaA)  \bar u_n(x)\big)\, dx}
&&
\\
&\le& \Big(\int_\CO ( \log \bar u_n (x)+1)^2\, dx \Big)^\frac 12 \Big(\int_\CO ( r_u (\DeltaA_\ep-\DeltaA)\bar u_n(x))^2\, dx \Big)^\frac 12
\\
&\le & \Big(\int_\CO ( \log \bar u_n (x)+1)^2\, dx \Big)^\frac 12 r_u
|\DeltaA_\ep-\DeltaA|_{L(H^\alpha,L^2)}|\bar u_n|_{H^\alpha} \to 0
\EEQS
and
\DEQS
 \int_\CO ( \log \bar u_n (x)+1) \big( \Div (\bar u_n(x)\nabla \bar v_n(x))-\Div( (\varphi_\ep\star (\bar u_n \nabla \bar v_n)) (x)))\big)\, dx\to 0
\EEQS
as $\ep\to 0$. Hence, $W(\bar u_n^\ep,\bar v_n^\ep)(t)\to W(\bar u_n,\bar v_n)(t)$ as $\ep\to 0$ for all $t \in [0,T]$.
Other terms in the expression $W(\bar u^{\eps}_n, \bar v^{\eps}_n)-W(\bar u_n, \bar v_n)$ can be handled in a similar way. However, for our convenience, we omit the technical details and apply the It\^o formula directly  to $W(\bar u_n ,\bar v_n )(t)$.
Also, we will use the representation of the Wiener process given by
$$
\CW_j(t,x) =\sum_{k\in\mathbb{Z}} \psi^{(\delta_j)}_k(x)\beta^{(j)}_k(t),\quad j=1,2,
$$
where $\psi^{(\delta_j)}_k$; $j=1,2$ is defined in Remark \ref{rem wiener} and $\{\beta^j_k:k\in\mathbb{Z}\}$; $j=1,2$ are two families of mutually independent Brownian motions.
We now start with the following calculations
\DEQSZ\label{startwith}
\\\nonumber
\lqq{ W(\bar u_n ,\bar v_n )(t\wedge \bar \tau_n )-W(\bar u_n ,\bar v_n )(0)}
&&
\\ &=&\underbrace{\int_0^{t \wedge \bar \tau_n }
\nonumber   \la (\log \bar u_n (s)+1),  d\bar u_n (s)\ra}_{I_1:=}  \underbrace{-\rho \int_0^{t \wedge \bar \tau_n }  \la \bar u_n (s), d\bar v_n (s)\ra
 + \la \bar v_n (s) ,d \bar u_n (s)\ra }_{I_2:=}
\\ &&{} 
 +\underbrace{\nonumber \int_0^{t \wedge \bar \tau_n }
\la \nabla \bar v_n (s),\nabla d \bar v_n (s)\ra}_{I_3:=}  + \underbrace{ \int_0^{t \wedge \bar \tau_n }\la \bar v_n (s),d\bar v_n (s)\ra}_{I_4:=}\,
\\\nonumber
&&{} +\frac12 \sum_{k\in\mathbb{Z}} \int_0^{t \wedge \bar \tau_n }\int_\CO \bar u_n(s,x)\, (\psi_k^{(\delta_1)}(x))^2\, dx\, ds
\\
&&{}
+\frac {C_1}2 \sum_{k\in\mathbb{Z}}\int_0^{t \wedge \bar \tau_n }\lk|\nabla\lk( \bar v_n(s) \, \psi_k^{(\delta_2)}\rk)\rk|^2_{L^2}\, ds
+\frac {C_2}2 \sum_{k\in\mathbb{Z}}\int_0^{t \wedge \bar \tau_n }|\bar v_n(s) \, \psi_k^{(\delta_2)}|^2_{L^2}\, ds.\nonumber
\EEQSZ
\del{\\
&&{}
+  \sum_{k\in\mathbb{Z}}\int_0^{t \wedge \bar \tau_n }\int_\CO (1+\ln(\bar u_n(s,x))\bar u_n(s,x) \psi_k^{(\delta_1)}(x)\, d\beta_k^1(s)\nonumber
\\&&{}+  \sum_{k\in\mathbb{Z}}\int_0^{t \wedge \bar \tau_n }\la \nabla \bar v_n(s),\nabla (\bar v_n(s) \psi_k^{(\delta_1)})\ra \, d\beta_k^2(s)\nonumber
\\&&{}
+  \sum_{k\in\mathbb{Z}}\int_0^{t \wedge \bar \tau_n }\la\bar v_n(s),\bar v_n(s) \psi_k^{(\delta_1)}\ra \, d\beta_k^2(s)\nonumber
.
\EEQSZ}
Let us consider the first term in the right hand side of \eqref{startwith}. Here, we obtain
\DEQSZ\label{unter01}
\\\nonumber
I_1:=\lqq{ \int_0^{t \wedge \bar \tau_n } \la( \log \bar u_n (s)+1 ), d\bar u_n (s)\ra}
&&
\\\nonumber
&=&\int_0^{t \wedge \bar \tau_n } \int_\CO ( \log \bar u_n(s,x)+1) \Big(r_u \DeltaA  \bar u_n (s,x)-\chi \,\Div( \bar u_n (s,x)\nabla \bar v_n (s,x)) +\gamma \bar u_n (s,x) \Big)\, dx\, ds
\\ \nonumber
&&{}+\sum_{k\in\mathbb{Z}} \int_0^{t \wedge \bar \tau_n }
\int_\CO ( \log \bar u_n(s,x)+1) \bar u_n (s,x) \, \psi^{(\delta_1)}_k(x) \,dx \,d\beta^{(1)}_k(s)
.
\EEQSZ
Applying integration by parts one gets for the first two terms in \eqref{unter01}
\DEQS
\lqq{ \int_0^{t \wedge \bar \tau_n } \int_\CO ( \log \bar u_n (s,x)+1) \Big(r_u \DeltaA  \bar u_n(s,x)-\chi \,\Div( \bar u_n (s,x)\nabla \bar v_n (s,x))\Big)\, dx\, ds
} &&
\\
&=&
-\int_0^{t \wedge \bar \tau_n } \int_\CO r_u  \bar u_n ^{-1}(s,x) \nabla \bar u_n (s,x) (\nabla   \bar u_n (s,x)-\chi \,\bar u_n (s,x)\nabla \bar v_n (s,x)) \, dx\, ds
\\
&=& -\int_0^ {t \wedge \bar \tau_n } 4r_u  |\nabla \sqrt{\bar u_n (s)}|^ 2 _{L^2}\,ds -\chi \,\int_0^{t \wedge \bar \tau_n } \int_\CO   \bar u_n (s,x) A \bar v_n (s,x)\, dx\, ds.
\EEQS
Applying the Burkholder-Davis-Gundy inequality (see \eqref{HSnorm}) and then the H\"older inequality  give
\DEQS
\lqq{ \EE\Big[ \sup_{0\le s\le {t \wedge \bar \tau_n }}\Big|\sum_{k\in\mathbb{Z}}
 \int_0^{t \wedge \bar \tau_n } \int  \la \log \bar u_n (s)+1, \bar u_n (s)\psi^{(\delta_1)}_k\ra\, d\beta^{(1)}_k(s)\Big|\Big]
}
&&
\\ &\le& 4 \EE \Big[\Big(\sum_{k\in\mathbb{Z}}\int_0^{t \wedge \bar \tau_n }  \, \la \log \bar u_n (s)+1, \bar u_n (s)\,\psi^{(\delta_1)}_k\ra ^2\, ds\Big)^\frac 12 \Big]
\\
&\le &C \, \EE\Big[\int_{0}^{t \wedge \bar \tau_n } \Big(\int_\CO |\log(\bar u_n (s,x))+1|\,|\bar u_n (s,x)|\, dx\Big)^2 ds\Big]^{\frac{1}{2}}
.
\EEQS
Here, we used the fact that for $\delta_1>1$ there exists a $\sigma>\frac 12$ such that $H^{\delta_1}(\CO)\hookrightarrow H^\sigma(\CO)$ is Hilbert--Schmidt and
the embedding $ H^\sigma(\CO)\hookrightarrow L^\infty(\CO)$ is continuous.

We now estimate the next term $I_2$, that is, we deal with $\int_\CO \bar u_n (t\wedge \bar \tau_n ,x) \bar v_n (t\wedge \bar \tau_n ,x)\, dx$.
Applying again the It\^o formula we obtain
\DEQS
\lqq{ I_2=\int_\CO  \bar u_n (t\wedge \bar \tau_n ,x)\bar v_n (t\wedge \bar \tau_n ,x)\ dx -\int_\CO  u_0(x)v_0(x)\ dx}
&&
\nonumber\\
&=& \int_0^ {t\wedge \bar \tau_n } \int_\CO (\bar v_n (s,x) (r_u \DeltaA \bar u_n (s,x)-\chi \, \Div ( \bar u_n (s,x) \nabla \bar v_n (s,x)) -\gamma \bar u_n (s,x))\, dx \, ds
\nonumber\\
&&{}+
\int _0^{t\wedge \bar \tau_n } \int_\CO   \bar u_n (s,x) (r_v  \DeltaA \bar v_n (s,x)-\alpha \bar v_n (s,x)+\beta \bar u_n (s,x))\, dx\, ds
\nonumber\\
&&{} +\sum_{k\in\mathbb{Z}} \int_0^ {t\wedge \bar \tau_n } \la \bar v_n (s) , \bar u_n (s)\,\psi^{(\delta_1)}_k\ra d\beta^{(1)}_k(s)
 + \sum_{k\in\mathbb{Z}}\int_0^ {t\wedge \bar \tau_n } \la \bar u_n (s) , \bar v_n (s)\,\psi^{(\delta_2)}_k\ra d\beta^{(2)}_k(s)
\del{\\
&&{} + \sum_{k\in\mathbb{Z}}\int_0^ {t\wedge \bar \tau_n } \la \bar v_n (s)\psi^{(\delta_2)}_k,\bar u_n (s)\psi^{(\delta_1)}_k\ra \, ds}
.
\EEQS
Using integration by parts and the Neumann boundary conditions, we obtain
\DEQS
 r_u  \int_\CO \bar v_n (s,x) \DeltaA \bar u_n (s,x)\, dx = - r_u \int_\CO \nabla \bar v_n (s,x) \nabla  \bar u_n (s,x)\, dx=r_u \int_\CO A \bar v_n(s,x) \bar u_n(s,x),
\EEQS
\DEQS
\chi \int_\CO  \bar v_n (s,x) \Div ( \bar u_n  (s,x)\nabla \bar v_n (s,x)) \, dx = -\chi \int_{\CO} \bar u_n (s,x) (\nabla \bar v_n (s,x))^2 \, dx .
\EEQS
Proceeding in similar lines we obtain
\begin{align*}
 &
 \int_\CO \bar v_n(-\gamma \bar u_n)+ \bar u_n (s,x)(-\alpha \bar v_n (s,x)+\beta \bar u_n (s,x) )\, dx
\notag\\& 
 =
-(\alpha+\gamma) \int_\CO \bar u_n (s,x)\bar v_n (s,x)\, dx +\beta  \int_\CO \bar u_n ^2(s,x) \, dx.
\end{align*}
As before we know  that for $\delta_1>1$ there exists a $\sigma>\frac 12$ such that $H^{\delta_1}(\CO)\hookrightarrow H^\sigma(\CO)$ is Hilbert--Schmidt and
the embedding $ H^\sigma(\CO)\hookrightarrow L^\infty(\CO)$ is continuous.
This gives 
\DEQS
\lqq{ \EE\Big[\sup_{0\le s\le t\wedge \bar \tau_n } \, \Big|\sum_{k\in\mathbb{Z}} \int_0^ s \la \bar v_n (r) , \bar u_n (r)\,\psi^{(\delta_1)}_k\ra d\beta^{(1)}_k(r) \Big|\Big] }
&&
\\ &\le & \EE\Big[ \lk( \sum_{k\in\mathbb{Z}} \int_0^ {t\wedge \bar \tau_n } \la \bar v_n (s) , \bar u_n (s)\,\psi^{(\delta_1)}_k\ra^2 ds\rk)^\frac 12
\Big] \le  C\Big(\sum_{k\in\mathbb{Z}}|\psi^{(\delta_1)}_k|_{L^\infty}^2 \,\EE\Big[\int_{ 0}^{t\wedge \bar \tau_n } \la \bar v_n (s) , \bar u_n (s)\ra^2 ds
\Big]\Big)^\frac12.
\EEQS
Arguing in similar lines, we treat the following term. 
This gives 
\DEQS
\lqq{ \EE\Big[\sup_{0\le s\le t\wedge \bar \tau_n } \, \Big|\sum_{k\in\mathbb{Z}} \int_0^ s \la \bar u_n (r) , \bar v_n (r)\,\psi^{(\delta_2)}_k\ra d\beta^{(2)}_k(r) \Big|\Big] }
&&
\\ &\le & \EE\Big[ \lk( \sum_{k\in\mathbb{Z}} \int_0^ {t\wedge \bar \tau_n } \la \bar u_n (s) , \bar v_n (s)\,\psi^{(\delta_2)}_k\ra^2 ds\rk)^\frac 12
\Big] \le  C\Big(\sum_{k\in\mathbb{Z}}|\psi^{(\delta_2)}_k|_{L^\infty}^2 \,\EE\Big[\int_{ 0}^{t\wedge \bar \tau_n } \la \bar u_n (s) , \bar v_n (s)\ra^2 ds
\Big]\Big)^\frac 12.
\EEQS
Tackling the terms $I_3$ and $I_4$ in \eqref{startwith} we obtain
\DEQS
\lqq{\frac 12 |\nabla \bar v_n ({t\wedge \bar \tau_n } )|^2_{L^2}-\frac 12|\nabla v _0|^2_{L^2} +r_v\int_0^{t\wedge \bar \tau_n } |\DeltaA \bar v_n (s)|_{L^2}^2\, ds  }
&&
\\
&& +\alpha \int_0^{t\wedge \bar \tau_n }  |\nabla \bar v_n (s)|^2_{L^2} \, ds=-\beta \int_0^{t\wedge \bar \tau_n } \int_\CO A \bar v_n (s,x) \bar u_n (s,x)\, dx \, ds
\\
&&{} +\sum_{k\in\mathbb{Z}} \int_0 ^{t\wedge \bar \tau_n } \la \nabla \bar v_n (s),\nabla (\bar v_n (s)\,\psi^{(\delta_2)}_k)\ra d\beta^{(2)}_k(s)
+\frac 12 \sum_{k=1}^\infty \int_0 ^{t\wedge \bar \tau_n } | \nabla \bar v_n (s)\,\psi_k^{(\delta_2)}|_{L^2}^2\, ds
\EEQS
and
\DEQS
\lqq{ \frac 12| \bar v_n (t)|^2_{L^2}-\frac 12| v_0|^2_{L^2}+r_v \int_0^{t\wedge \bar \tau_n } |\nabla \bar v_n (s)|_{L^2}^2 \, ds +\alpha \int_0^{t\wedge \bar \tau_n } | \bar v_n (s)|^2_{L^2}\, ds }
&&
\\&=&  \beta  \int_0^{t\wedge \bar \tau_n } \la  \bar v_n (s), \bar u_n (s)\ra \, ds
+\sum_{k=1}^\infty \int_0 ^{t\wedge \bar \tau_n } \la  \bar v_n (s), (\bar v_n (s)\, \psi^{(\delta_2)}_k)\ra d\beta^{(2)}_k(s)
\\
&&\quad{}
+\frac 12 \sum_{k=1}^\infty \int_0 ^{t\wedge \bar \tau_n } | \bar v_n (s)\,\psi_k^{(\delta_2)}|_{L^2}^2\, ds
.
\EEQS
Again, by similarly consideration as above and the fact that $\delta_2>\frac 32$, we know 
\DEQS
\lqq{ \EE\Big[\sup_{0\le s\le t\wedge \bar \tau_n } \,\Big| \int_0 ^{s} \la \nabla \bar v_n (s),\nabla (\bar v_n (s)\,\psi^{(\delta_2)}_k)\ra d\beta^{(2)}_k(s)
 \Big|\Big] }
&&
\\ &\le & C\,\EE\,\Big( \sum_{k=1}^ \infty |\psi^{(\delta_2)}_k|_{L^\infty}^2 \int_0^ {t\wedge \bar \tau_n } |\nabla \bar v_n (s)|_{L^2}^4 ds\Big)^\frac 12  
\\ &\le &
  \frac{C_1}{2} \,\EE\Big[\sup_{ 0\le s \le t\wedge \bar \tau_n } |\nabla \bar v_n (s)|_{L^2}^2\Big]
+ C \EE\Big[\int_0^{t\wedge \bar \tau_n } |\nabla \bar v_n (s)|_{L^2}^2 ds\Big]
\EEQS
and
\DEQS
\lqq{ \EE\Big[\sup_{0\le s\le t\wedge \bar \tau_n } \,\Big| \int_0 ^{s} \la  \bar v_n (s), (\bar v_n (s)\,\psi^{(\delta_2)}_k)\ra d\beta^{(2)}_k(s)
 \Big| \Big]}
&&
\\ &\le & \EE\, \Big(\sum_{k=1}^ \infty \int_0^ {t\wedge \bar \tau_n } \la \bar v_n (s) ,  \bar v_n (s)\,\psi^{(\delta_2)}_k\ra^2 ds\Big)^\frac 12  \\ &\le &
  \frac{C_2}{2} \,\EE\Big[\sup_{ 0\le s \le t\wedge \bar \tau_n } | \bar v_n (s)|_{L^2}^2\Big]
+ C \EE\Big[\int_0^{t\wedge \bar \tau_n } | \bar v_n (s)|_{L^2}^2 ds\Big].
\EEQS
Recollecting altogether, we obtain at the end
\DEQSZ\label{fin eqn w}
\lqq{ \EE\Big[\sup_{0\le s\le t\wedge \bar \tau_n }\big[ W(\bar u_n (s),\bar v_n (s))-W(u_0,v_0) \big]\Big] + \EE\int_0^ {t\wedge \bar \tau_n } \Bigg( 4 r_u  |\nabla \sqrt{\bar u_n (s)}|^ 2 _{L^2}
+C_1 r_v |\DeltaA \bar v_n (s)|_{L^2}^2 } &&
\notag\\
&&{} +(C_2 r_v+C_1 \alpha ) |\nabla \bar v_n (s)|^2_{L^2} +C_2\alpha | \bar v_n (s)|^2_{L^2}\,
+{} 
\rho \beta |\bar u_n (s)|^2_{L^2}
+\chi\,\rho\, \la \bar u_n (s), (\nabla \bar v_n (s))^2 \ra \Bigg)\, ds
\notag\\
&\le & \EE\int_0^{t\wedge \bar \tau_n } \Big(   (\rho\alpha+\rho \gamma +C_2\beta) \la \bar u_n (s),\bar v_n (s)\ra  +
(\chi+\rho r_u+\rho r_v+C_1\beta )|\la u_n (s), A \bar v_n (s)\ra| \Big)\, ds
\notag\\
&&{}
+ C \, \EE\Big[\int_{0}^{t \wedge \bar \tau_n } \Big(\int_\CO |\log(\bar u_n (s,x))+1|\,|\bar u_n (s,x)|\, dx\Big)^2 ds\Big]^{\frac{1}{2}}
\notag\\
&&{}+\frac{C_1}{2} \,\EE\Big[\sup_{ 0\le s \le t\wedge \bar \tau_n } |\nabla \bar v_n (s)|_{L^2}^2\Big]
+ C \EE\Big[\int_0^{t\wedge \bar \tau_n } |\nabla \bar v_n (s)|_{L^2}^2 ds\Big] +
C \,\EE\Big[\int_{ 0}^{t\wedge \bar \tau_n } \la \bar v_n (s) , \bar u_n (s)\ra^2 ds
\Big]^\frac12
\notag\\
&&{}
+
\frac{C_2}{2} \,\EE\Big[\sup_{ 0\le s \le t\wedge \bar \tau_n } | \bar v_n (s)|_{L^2}^2\Big]
+ C \EE\Big[\int_0^{t\wedge \bar \tau_n } | \bar v_n (s)|_{L^2}^2 ds\Big]
.
\EEQSZ
However, by the Young inequality we have for $\ep_0=\frac 14 \rho\beta/ (\rho\alpha +\rho \gamma+C_2 \beta)$
$$
\la \bar u_n (s),\bar v_n (s)\ra \le C(\ep_1) |\bar v_n (s)|_{L^2}^2+\ep_0|\bar u_n (s)|_{L^2}^2
$$
and for $\ep_1=\frac 14 \rho\beta/ (\chi+\rho r_u+\rho r_v +C_1\beta )$
$$|\la  \bar u_n (s),A \bar v_n (s)\ra| \le C(\ep_1)|\DeltaA \bar v_n (s)|_{L^2}^2+\ep_1|\bar u_n (s)|_{L^2}^2,$$
where $C(\eps_1)= (\chi+\rho r_u+\rho r_v +C_1\beta )/ \rho \beta$. In particular, the terms in the right hand side of \eqref{fin eqn w} can be estimated in the following way
\DEQS
\lqq{ \EE\Big[ \sup_{0\le s\le t\wedge \bar \tau_n }\big[ W(\bar u_n (s),\bar v_n (s))-W(u_0,v_0) \big] \Big]}
&&
\\
&&{} + \EE\int_0^ {t\wedge \bar \tau_n } \Bigg( 4 r_u  |\nabla \sqrt{\bar u_n (s)}|^ 2 _{L^2}
+C_1 r_v |\DeltaA \bar v_n (s)|_{L^2}^2  +(C_2 r_v +C_1 \alpha ) |\nabla \bar v_n (s)|^2_{L^2}
\\
&&{} +C_2\alpha  | \bar v_n (s)|^2_{L^2}\,
+{} 
\frac 14 \rho \beta |\bar u_n (s)|^2_{L^2}
+\chi\rho\, \la \bar u_n (s), (\nabla \bar v_n (s))^2 \ra \Bigg)\, ds \\
\\
&\le & \EE\int_0^{t\wedge \bar \tau_n } \Big(    C(\ep_0) | \bar v_n (s)|^2_{L^2}  + C(\ep_1) | \DeltaA  \bar v_n (s)|^2_{L^2}
 \Big)\, ds
\\
&&{}
+ C \,({t \wedge \bar \tau_n })^\frac 12  \, \Big( \EE\Big[\sup_{0\le s\le {t \wedge \bar \tau_n }} \Big(\int_\CO |\log(\bar u_n (s,x))+1|^2\,|\bar u_n (s,x)|^2\, dx\Big)^\frac 12\Big]\Big)
\\
&&{}+\frac{C_1}{2}\EE\Big[\sup_{ 0\le s \le t\wedge \bar \tau_n } |\nabla \bar v_n (s)|_{L^2}^2\Big]
+
C\EE\Big[\int_0^{t\wedge \bar \tau_n} |\nabla \bar v_n (s)|_{L^2}^2\, ds\Big]
\\
&&{}
+\frac{C_2}{2}\EE\Big[\sup_{ 0\le s \le t\wedge \bar \tau_n } | \bar v_n (s)|_{L^2}^2\Big]
+
C\EE\Big[\int_0^{t\wedge \bar \tau_n} |\bar v_n (s)|_{L^2}^2\, ds\Big]
.
\EEQS
Using Gronwall inequality we get the follwing 
\DEQS
\lqq{ \EE\Big[ \sup_{0\le s\le t\wedge \bar \tau_n }\big[ W(\bar u_n (s),\bar v_n (s))-W(u_0,v_0) \big] \Big]}
&&
\\
&&{} + \EE\int_0^ {t\wedge \bar \tau_n } \Bigg( 4 r_u  |\nabla \sqrt{\bar u_n (s)}|^ 2 _{L^2}
+(C_1 r_v-C(\eps_1) |\DeltaA \bar v_n (s)|_{L^2}^2  +(C_2 r_v +C_1 \alpha ) |\nabla \bar v_n (s)|^2_{L^2}
\\
&&{} +C_2\alpha  | \bar v_n (s)|^2_{L^2}\,
+{} 
\frac 12 \rho \beta |\bar u_n (s)|^2_{L^2}
+\chi\rho\, \la \bar u_n (s), (\nabla \bar v_n (s))^2 \ra \Bigg)\, ds \\
\\
&\le & \EE\int_0^{t\wedge \bar \tau_n }   C(\ep_0) | \bar v_n (s)|^2_{L^2}  ds +
C\EE\int_0^{t\wedge \bar \tau_n}\Big[ |\nabla \bar v_n (s)|_{L^2}^2\Big] ds
\\
&&{}
+ C \, \EE\Big[\int_{0}^{t \wedge \bar \tau_n } \Big(\int_\CO |\log(\bar u_n (s,x))+1|\,|\bar u_n (s,x)|\, dx\Big)^2 ds\Big]^{\frac{1}{2}}
.
\EEQS
In the above inequality we choose $C_1> (r_u+r_v) /\beta r_v$ and $\rho > (\chi+C_1 \beta)/(C_1 \beta r_v -r_v-r_u)$, then $C_1 r_v-C(\eps_1)>0$.
This completes the proof of Claim \ref{claim esti}.

\end{proof}
It remains to estimate the two entities $|\bar u_n(\cdot)|_{\LLn}$ and $|\nabla \bar v_n(\cdot)|_{L^2}$ in
the time interval $[\bar \tau_n ,T]$. This we can do using  standard estimates for the heat equation
with multiplicative Wiener noise. In particular, applying the It\^o formula we obtain for $u\in \LLa(\CO)$ and $v\in H^1(\CO)$
\DEQS
\CE(u,v):= \int_\CO u(x)\, \log( u(x) )\, dx+C_1|\nabla v|_{L^2}^2,
\EEQS
and by elementary calculations we obtain
\DEQS
\lqq{  \bar u_n(T)\log(\bar u_n(T))=   \bar u_n(\bar \tau_n )\log(\bar u_n(\bar \tau_n )) + \int_{\bar \tau_n}^T \int_\CO \DeltaA \bar u_n(t,x) \, (\log( {\bar u_n}(t,x))+1)\, dx\,dt}
\\
&&{}  +
  \int_{\bar \tau_n}^ t \sum_{k\in\mathbb{Z}} \la  \log(1+\bar u_n(s)) \bar u_n(s)\,\psi^{(\delta_1)}_k\ra  d\beta^{(1)}_k(s)\, +
  \frac 12\sum_{k\in\mathbb{Z}}   \int_{\bar \tau_n}^ t \int_\CO\bar u_n(s,x)(\psi_k^{(\delta_1)}(x))^2\,dxds
.\EEQS
Similar computations works also for $|\nabla \bar v_n(\cdot)|^2_{L^2}$. In particular, we obtain
\DEQS
\lqq{ |\nabla \bar v_n(T)|_{L^2}^2-|\nabla \bar v_n(\bar \tau_n )|_{L^2}^2 - \int_{\bar \tau_n }^T
\la \DeltaA \bar v_n(s), \DeltaA \bar v_n(s) \ra \, ds}
\\& = &\int_{\bar \tau_n}^ T \sum_{k\in\mathbb{Z}}\la \nabla \bar v_n(s),  \nabla \bar v_n(s)\,\psi^{(\delta_2)}_k\ra  d\beta^{(2)}_k(s)\,
  +   \frac 12\sum_{k\in\mathbb{Z}}   \int_{\bar \tau_n}^ T \lk|\nabla (\bar v_n(s)\psi_k^{(\delta_2)})\rk|_{L^2}^2\, ds
.
\EEQS
The stochastic integrals can be estimated again by \eqref{HSnorm} and \eqref{HSnorm.1}.
\medskip

Let us summarize. We have shown that there exists a constant $C>0$ such that for any $t\in [0,T]$ we have
\DEQS
 \EE\Big[ \sup_{0\le s\le t}
 \lk[ \mathcal{E}(\bar u_n  (s), \bar v_n  (s))  -\mathcal{E}(u(0),\bar v_n(0)\rk]\Big]
 &\le &  C\int_0^ t \EE \Big[ \sup_{0\le \sigma \le s}  \CE\lk( \bar v_n  ( \sigma),\bar u_n( \sigma)\rk)\, ds\Big]. 
\EEQS
The Grownwal inequality gives 
$$
\EE\Big[ \sup_{0\le s\le t}  \lk[ |\nabla \bar v_n  (s)|_{L ^2} ^2+|\bar u_n  (s)|_{\LLn} \rk]\Big]\le C \lk(\EE \, \CE(u_0,v_0)+1\rk).
$$
In addition, we know that $\EE \, W(u_0,v_0)\le \EE \lk[ |\nabla v_0|_{L^2}^2+|u_0|_{\LLn}\rk]$.
This completes the proof of Claim \ref{oben}.
\end{proof}
\item
In the final step, we will show that $\PP$--a.s. a martingale  solution to \eqref{eq1} exists.


\begin{claim} There exists a measurable set $\tilde \Omega\subset \Omega$ with $\PP(\tilde \Omega)=1$ such that on $\tilde \Omega$ a martingale solution $(u,v)$ of \eqref{chemonoise} exists.
\end{claim}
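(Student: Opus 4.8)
The plan is to realise, on one common probability space, all the ingredients produced in Step~III and to show that the stopping times $\bar\tau_n$ defined in \eqref{stopp_time} eventually exceed $T$ almost surely. Since the construction in Step~III is nested, $\overline{\mathfrak{A}}_n=\overline{\mathfrak{A}}_{n-1}\times\mathfrak{A}_n$, the pairs $(\bar u_n,\bar v_n)$, the Wiener processes $(\bar{\mathcal W}^1_n,\bar{\mathcal W}^2_n)$ and the stopping times $\bar\tau_n$ all live, consistently, on the projective limit $\overline{\mathfrak{A}}_\infty:=\prod_{k\ge1}\mathfrak{A}_k$; there $(\bar\tau_n)_{n\in\NN}$ is nondecreasing, $(\bar u_{n+1},\bar v_{n+1})=(\bar u_n,\bar v_n)$ on $[0,\bar\tau_n]$, and likewise the Wiener processes agree on $[0,\bar\tau_n]$. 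I would set $\bar\tau_\infty:=\sup_n\bar\tau_n$ and reduce everything to proving $\PP(\bar\tau_\infty>T)=1$.

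The heart of the argument is the bound $\PP(\bar\tau_n\le T)\to0$. On the event $\{\bar\tau_n\le T\}$ all the increments $\tau^\ast_1,\dots,\tau^\ast_n$ are finite and, in particular, the $n$-th truncated piece $(u_n,v_n)$ must have attained one of its three levels $n$ within $[0,T]$; since $(\bar u_n,\bar v_n)$ is a time-shift of $(u_n,v_n)$ on $[\bar\tau_{n-1},\bar\tau_n]$ and $\bar\tau_n\le T$, this forces that on $\{\bar\tau_n\le T\}$ at least one of $\sup_{0\le s\le T}|\bar u_n(s)|_{L^1}\ge n$, $\sup_{0\le s\le T}|\nabla\bar v_n(s)|_{L^2}^2\ge n$, $\int_0^T|\nabla\bar v_n(s)|_{H^1}^2\,ds\ge n$ holds. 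Chebyshev's inequality together with the uniform-in-$n$ estimates of Claim~\ref{oben} then yields $\PP(\bar\tau_n\le T)\le C(T)/n$. As $\{\bar\tau_\infty\le T\}\subseteq\{\bar\tau_n\le T\}$ for every $n$, letting $n\to\infty$ gives $\PP(\bar\tau_\infty\le T)=0$, i.e.\ $\PP(\bar\tau_\infty>T)=1$.

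It then remains to read off the solution. I would put $\tilde\Omega:=\{\bar\tau_\infty>T\}$, a set of full $\PP$-measure, and, for $\omega\in\tilde\Omega$, $n(\omega):=\min\{n:\bar\tau_n(\omega)>T\}$; the processes $(u,v):=(\bar u_{n(\omega)},\bar v_{n(\omega)})|_{[0,T]}$ and $(\mathcal W_1,\mathcal W_2):=(\bar{\mathcal W}^1_{n(\omega)},\bar{\mathcal W}^2_{n(\omega)})|_{[0,T]}$ are, thanks to the consistency relations, independent of the chosen index $n\ge n(\omega)$, hence well defined on $\tilde\Omega$ and progressively measurable for the limit filtration. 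On $[0,\bar\tau_{n(\omega)})\supseteq[0,T]$ all cut-offs $\psi_{n(\omega)}(h^j(\cdot))$ equal $1$, so $(\bar u_{n(\omega)},\bar v_{n(\omega)})$ already solves \eqref{chemonoise}; taking $\mathfrak{A}$ to be $\overline{\mathfrak{A}}_\infty$ restricted to $\tilde\Omega$ — which, being of full probability, still carries two cylindrical Wiener processes on $\CH_1\times\CH_2$ and a filtration satisfying the usual conditions, so that no conditioning on $\tilde\Omega$ is needed — the tuple $(\mathfrak{A},(\mathcal W_1,\mathcal W_2),(u,v))$ is a strong solution of \eqref{chemonoise} in the sense of Definition~\ref{Def:mart-sol}, and by the It\^o--Stratonovich correspondence of Section~\ref{sec.itostrat} a martingale solution of \eqref{chemonoisestrat}. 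The a priori bounds \eqref{eee} (and, under the extra moment hypotheses, \eqref{eeep}) would then follow from Claim~\ref{oben} (respectively from its $p$-th power analogue obtained in Step~IV) together with Fatou's lemma, using that $\bar u_n\to u$ and $\bar v_n\to v$ $\PP$-a.s.\ on $\tilde\Omega$.

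The step I expect to be the main obstacle is exactly the passage $\PP(\bar\tau_\infty>T)=1$: one must check carefully that whenever $\bar\tau_n\le T$ the $n$-th truncation threshold is attained not merely by the auxiliary process $(u_n,v_n)$ but by the glued process $(\bar u_n,\bar v_n)$ over the whole of $[0,T]$, so that the uniform Lyapunov estimates of Claim~\ref{oben} can legitimately be inserted into Chebyshev's inequality; everything else is bookkeeping about the nested probability spaces and verifying the axioms of Definition~\ref{Def:mart-sol}.
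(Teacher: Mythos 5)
Your proposal is correct and follows essentially the same route as the paper: the uniform bounds of Claim \ref{oben} are fed into Chebyshev's inequality to show that the probability that the $n$-th truncation level is reached before time $T$ is at most $C(T)/n$, and on the complementary (increasing, asymptotically full-measure) events the glued process $(\bar u_n,\bar v_n)$ of Step III already solves \eqref{chemonoise} on $[0,T]$, so the solution is read off on the limit set $\tilde\Omega$. Your treatment is in fact somewhat more explicit than the paper's (common probability space, monotonicity of $\bar\tau_n$, and why $\{\bar\tau_n\le T\}$ forces one of the thresholds to be attained by the glued process), but the underlying argument is identical.
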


\begin{proof}
For any $m\in\NN$
let us define the set
		$$A_m :=
		\lk\{ \omega\in \Omega: \tau^\ast_m\ge T\rk\}.
		$$
		It can be clearly observed that
		there exists a progressively measurable process $(u,v)$ over $\mathfrak{A}$ 
such that $(u,v)$ solves $\PP$--a.s. the integral equation given by
		\eqref{chemonoise} up to time $T$. In particular, we have for the conditioned probability
		$$
		\PP\lk( \{ \mbox{there exists a solution $(u,v)$ to \eqref{chemonoise}} \} \mid A_m\rk)=1.
		$$
		Then{,} it is elementary to verify, that
\[
\PP(\{ \mbox{there exists a solution to \eqref{chemonoise}} \})
=
\lim_{m\to \infty} 
\PP(  \{ \mbox{there exists a solution $(u,v)$ to \eqref{chemonoise}} \} \mid A_m ) \ \PP( A_m).
\]
		%
		Since $ \PP\lk(  \{ \mbox{a solution $(u,v)$ to \eqref{chemonoise} exists} \} \mid A_m\rk)=1$, it remains to show that
		${\lim_{m\to\infty}}$ $\PP( A_m)=1$. Then, as $A_m\supset A_{m+1}$, it follows that
		$$\PP\lk( \lk\{ \mbox{there exists solution to \eqref{chemonoise}}
		\rk\}\rk)=1.
		$$
Since there exists a constant $C(T)>0$ such that
		$$
		\EE\Big[ \sup_{0\le t\le T}|\bar u_n(t)|_{L^1}\Big]
\red{+}
\EE \Big[ \sup_{0\le t\le T} |\nabla \bar v_n(t)|^2_{L^2}\Big]
\red{+}
 \EE \Big[\int_0^T|\nabla \bar v_n(t)|_{H^1}^2\,dt\Big] \le C(T). 
		$$
		%
		we deduce
		$$\PP\lk(\Omega\setminus  A_m\rk) \le \frac {C(T)}{m }\to 0.
$$
		The solution process is well defined on $\tilde \Omega=\lim_{m\to\infty} \CA_m$, where $\PP(\tilde \Omega)=1$.
		\end{proof}

\end{steps1}

This completes the proof of Theorem \ref{main.thm}.

\end{proof}
\appendix

\del{\section{Compactness results}

\del{ A sequence of random variables $\{X_n:n\in\NN\}$ with values in metric space $M$ is called tight,  if for every $\delta>0$, there exists a compact set $K\subset M$  such that $\PP( X_n\in K)\ge 1-\delta$ for all $n\in\NN$.  Here the metric space $M$ may be a Hilbert or Banach space.
 If a sequence is tight, then it follows by the Prohorv Theorem that there exists a element $X^\ast\in M$ such that An example is M = C([0,T];H),withT > 0and H beingaHilbertspace.Moreover,tightnessyieldsacompactnessoftherandomvariableinthesenseofdistribution.LetL(Xn)denotethedistributionof Xn inspaceM.Wehavethefollowingcompactnessresult[94,Theorem2.3]. Theorem 3.14 (Prohorov theorem). Assume thatMis a separable Banach space. The set of probability measures{L(Xn)}on (M,B(M)) is relatively compact if and only if{Xn}is tight. By the Prohorov theorem, if the set of random variables{Xn}is tight, then there is a subsequence{Xnk}and some probability measure μ such that L(Xnk) → μ, weakly as k →∞}

Let $B$ be a separable Banach space, $I \subset \RR_+$ be an interval. Recall that
 $$\LL ^ p(I ;B)=\lk\{ u:I\to B: \mbox{ $u$ measurable and  }\int_I  |u(t)|_{B} ^ p\, dt <\infty\rk\}.
$$
Given $p>1$, $\alpha\in(0,1)$, let $\WW ^ {\alpha,p} (I;B)$ be the Sobolev space
of all $u\in \LL ^ p(\RR_+;B)$ such that
$$
\int_I  \int_{I\cap [t,t+1]}   \,{|u(t)-u(s)|_B ^ p\over |t-s| ^ {1+\alpha p}}\,ds\,dt<\infty.
$$
equipped with the norm
$$
\lk\| u\rk\|_{ \WW^ {\alpha,p}(I ;B)}:=\lk( \int_I  \int_{I\cap [t,t+1]}  \,{u(t)-u(s)|_B ^ p\over |t-s| ^ {1+\alpha p}}\,ds\,dt\rk) ^ \frac 1p.
$$

\begin{theorem}\label{th-gutman}
Let $B_0\subset B\subset B_1$ be Banach spaces, $B_0$ and $B_1$ reflexive, with compact embedding of $B_0$ in $B$. Let $p\in(1,\infty)$ and $\alpha\in(0,1)$ be given. Let $X$ be the space
$$
X=\LL _0^ p([0,T];B_0)\cap \WW _0^{\alpha,p} ([0,T];B_1).
$$
Then the embedding of $X$ in $\LL_0 ^ p( [0,T];B)$
is compact.
\end{theorem}

\begin{lemma}
Let $\{\xi_n:n\in\NN\}$ be a family of progressively measurable processes such that for all $\delta>0$,
\begin{itemize}
  \item there exists a $h_\delta:[0,1]\to\RR^+_0$ such that $\EE|\xi_n(t)-\xi_n(s)|^\gamma\le h(|t-s|)$ for all $n\in\NN$, $t,s\in[\delta,T]$, $|t-s|\le 1$,
  \item there exists a constant $C=C(\delta)>0$ such that $\sup_{t\in[\delta,T]}\EE \xi_n(t)|^{\gamma_2}\le C$ for all $n\in\NN$;
\end{itemize}
if $|\xi(0)|^{\gamma_3}\le \infty$ and
\end{lemma}

}

\section{Proof of Corollary \ref{cor.main}}\label{appb}

As mentioned in the introduction, we move the proof of Corollary \ref{cor.main} to the appendix.
For the convenience of the reader, we will shortly introduce the results of \cite{brzezniak} and \cite{brzezniaGatarek}, respectively, which we use within the proof of the main result. For simplicity, we do not state the results in their most general forms.

First, let us state the setting within our framework.
%

\begin{assumption}\label{assum-1}
 Let
$E$ be a Hilbert  space and let $\CA$ be a linear map on $E$ satisfying the following conditions.
\begin{enumerate}
 \item 
$-\CA $ is a positive operator\footnote{See Section I.14.1 in Triebel's monograph \cite{Triebel_1995}.} on $E$ with compact resolvent. In particular,
there
exists $M>0$ such that
\[ \Lve
(\CA +\lambda)^{-1}\Rve \le \frac{M}{1+\lambda}, \mbox{ for any }\lambda\ge 0;\]
 \item 
infinitesimal generator of an analytic semigroup of contraction
type in $E$.
\item 
 $\CA $ has the BIP (bounded imaginary power) property, i.e. there exist some
 constants $K>0$ and $\vt\in [0,\frac\pi2)$
such that
\begin{equation*}
\Vert \CA ^{is} \Vert \le K e^{\vartheta |s|}, \; s \in \mathbb{R}.
\end{equation*}

\end{enumerate}
\end{assumption}

Let us fix two real numbers $q\in (1,\infty)$ and $\mu\geq 0$.
Similarly to Brze{\'z}niak and G{\c{a}}tarek \cite{brzezniaGatarek} we
define  a linear operator ${{\mathbb{A}}}$ by the formula
\begin{align*}
D({{\mathbb{A}}}) &:= \left\{ u\in \LL ^q(0,T ;\BB): \; \CA u \in \LL ^q(0,T
;E)\right\},
\nonumber\\
 {{\mathbb{A}}}u&:=\left\{(0,T) \ni t \mapsto \CA (u(t)) \in \BB \right\},\quad u\in D({\mathbb{A}}).
\end{align*}
It  is  known, see  \cite{venni}, that  if $\CA+\mu I$ satisfies
Assumption \ref{assum-1}, then so does
${{\mathbb{A}}} + \mu I$.
With $q$ and $\CB$ as above we define two operators ${\mathbb{B}}$ and $\Lambda$, see \cite{brzezniaGatarek}, by
\begin{alignat*}{2}
{\mathbb{B}} u &:= u^\prime, & \quad u\in
D({\mathbb{B}}) &:= H_{0} ^ {1,q}(0,T;\BB).
\\
\Lambda&:= {\mathbb{B}} +{{\mathbb{A}}}, &\quad\quad D(\Lambda) &:=D({\mathbb{B}})\cap
D({{\mathbb{A}}}). \label{2.10}
\end{alignat*}

Next, by
\cite{venni} and \cite{Giga+Sohr_1991}, since $\Lambda={\mathbb{B}}-\mu I +{{\mathbb{A}}} +\mu
I$,  we deduce that $\Lambda $ is a positive
operator. In particular, $\Lambda$ has a bounded inverse. The
domain $D(\Lambda)$ of $\Lambda$ endowed with the `graph' norm
\begin{equation*}
\Vert u\Vert = \left\{ \int_0^T |u^\prime(s)|^q \, ds + \int_0^T
|\CA u(s)|^q \, ds \right\}^{\frac1q} \label{2.11a}
\end{equation*}
is a Banach space.
Let us  present two results on the fractional powers
of the  operator $\Lambda$, see  \cite{brzezniak} for the proof.

\begin{proposition}\label{Prop:2.0} Assume that Assumption \ref{assum-1} is  satisfied.
Then,  for any $ \bar \alpha \in(0, 1]$,  the operator
$\Lambda^{-\bar \alpha}$ is a bounded linear operator in $\LL^q
(0,T;E)$, and for $\eta \in \LL^q(0,T;E)$,
\[
\left(\Lambda^{-\bar \alpha}\eta\right) (t)
=
\frac1{\Gamma (\bar \alpha)}  \int_0^t (t-s)^{\bar \alpha -1}
e^{-(t-s)\CA }\eta (s) \, ds, \quad t \in [0,T].
\]
\end{proposition}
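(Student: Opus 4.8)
The plan is to realise $\Lambda^{-\bar\alpha}$ through the $C_0$-semigroup generated by $-\Lambda$ and then to evaluate the resulting Balakrishnan-type integral explicitly.

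First I would identify that semigroup. Since $\mathbb B=d/dt$ on $H^{1,q}_0(0,T;\mathcal B)$, the operator $-\mathbb B$ generates the nilpotent left-translation semigroup $(e^{-t\mathbb B}\eta)(s)=\mathbf 1_{\{s\ge t\}}\,\eta(s-t)$ on $L^q(0,T;E)$, while by Assumption \ref{assum-1}(1)--(2) the operator $-\mathbb A$ generates the bounded $C_0$-semigroup $(e^{-t\mathbb A}\eta)(s)=e^{-t\mathcal A}\eta(s)$. These two semigroups commute, because the pointwise action of $e^{-t\mathcal A}$ commutes with translation in the time variable; hence $-\Lambda=-(\mathbb B+\mathbb A)$ generates the product semigroup
\[
(e^{-t\Lambda}\eta)(s)=\mathbf 1_{\{s\ge t\}}\,e^{-t\mathcal A}\eta(s-t),\qquad s\in(0,T),\ t\ge 0 .
\]
(Equivalently, one verifies directly that the right-hand side defines a $C_0$-semigroup on $L^q(0,T;E)$ whose generator is $-\Lambda$; the crucial feature is that it is \emph{eventually zero}, $e^{-t\Lambda}=0$ for $t\ge T$, because the translation part is nilpotent.)

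Next, since by \cite{venni} the operator $\Lambda+\mu I$, and hence $\Lambda$ itself (which has a bounded inverse and is in particular injective), is a positive operator, its negative fractional powers admit the semigroup representation
\[
\Lambda^{-\bar\alpha}\eta=\frac1{\Gamma(\bar\alpha)}\int_0^\infty t^{\bar\alpha-1}e^{-t\Lambda}\eta\,dt,\qquad \bar\alpha\in(0,1],
\]
and here the improper integral is harmless: near $t=0$ the factor $t^{\bar\alpha-1}$ is integrable, and on $[T,\infty)$ the integrand vanishes by nilpotency, so the integral runs effectively over $[0,T]$. Inserting the explicit form of $e^{-t\Lambda}$ and then performing the change of variables $r:=s-t$ in the inner integral gives, for a.e.\ $s\in(0,T)$,
\[
(\Lambda^{-\bar\alpha}\eta)(s)=\frac1{\Gamma(\bar\alpha)}\int_0^{s}t^{\bar\alpha-1}e^{-t\mathcal A}\eta(s-t)\,dt=\frac1{\Gamma(\bar\alpha)}\int_0^{s}(s-r)^{\bar\alpha-1}e^{-(s-r)\mathcal A}\eta(r)\,dr,
\]
which, after renaming $s\mapsto t$, $r\mapsto s$, is exactly the asserted formula; the case $\bar\alpha=1$ reduces to the usual variation-of-constants formula for $\Lambda^{-1}$. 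Boundedness on $L^q(0,T;E)$ then follows from Minkowski's integral inequality together with the boundedness of $\{e^{-t\mathcal A}\}_{t\ge0}$ (a contraction-type semigroup by Assumption \ref{assum-1}(2)) and the fact that translation is a contraction on $L^q$, yielding $\|\Lambda^{-\bar\alpha}\|_{\mathcal L(L^q(0,T;E))}\le C_{T,\bar\alpha}$ with $C_{T,\bar\alpha}$ of order $T^{\bar\alpha}/\Gamma(\bar\alpha+1)$.

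The only genuinely delicate point is the legitimacy of the semigroup representation, i.e.\ that the fractional power $\Lambda^{-\bar\alpha}$ furnished by the functional calculus for positive operators coincides with the integral above; this is where I would appeal to the standard theory of fractional powers of sectorial operators as developed in \cite{brzezniak}, the eventual nilpotency of $e^{-t\Lambda}$ rendering all convergence and domain issues automatic.
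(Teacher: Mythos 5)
Your proof is correct, and it is worth noting that the paper itself does not prove Proposition \ref{Prop:2.0} at all: it is quoted from \cite{brzezniak} (see also \cite{brzezniaGatarek}), where the argument is essentially the one you give, namely identifying the semigroup generated by $-\Lambda$ as the nilpotent operator family $\bigl(e^{-t\Lambda}\eta\bigr)(s)=\mathbf{1}_{\{s\ge t\}}e^{-t\CA}\eta(s-t)$ and then evaluating the Balakrishnan integral, which terminates at $t=T$. Two points deserve a sentence more than you give them. First, the generator of the product semigroup is a priori only an \emph{extension} of $-(\mathbb{B}+{\mathbb{A}})$ on $D(\mathbb{B})\cap D({\mathbb{A}})$; to conclude it equals $-\Lambda$ you should compare resolvents at one point $\lambda>0$, using that $\lambda+\Lambda$ is boundedly invertible (Dore--Venni, \cite{venni}, which you do invoke) and that $\lambda$ lies in the resolvent set of the generator of the (bounded, eventually zero) semigroup; an inclusion of injective surjective operators is an equality. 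Second, the coincidence of $\frac1{\Gamma(\bar\alpha)}\int_0^\infty t^{\bar\alpha-1}e^{-t\Lambda}\,dt$ with the fractional power $\Lambda^{-\bar\alpha}$ defined through the functional calculus for positive operators is the standard consistency statement for a positive operator with $0$ in its resolvent set whose negative generates a bounded semigroup; here convergence is trivial because $e^{-t\Lambda}=0$ for $t\ge T$, as you observe. Minor cosmetic issues: the semigroup you write down is the right-shift (not left-translation) semigroup, and the pointwise evaluation of the Bochner integral in $L^q(0,T;E)$ at a.e.\ $s$ is a routine Fubini step worth mentioning. With these clarifications your argument is complete, and the final bound $\|\Lambda^{-\bar\alpha}\|\le M T^{\bar\alpha}/\Gamma(\bar\alpha+1)$ by Minkowski's integral inequality is exactly what is needed.
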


\begin{lemma}\label{L:reg}
(Compare with Lemma  2.4 of \cite{brzezniaGatarek}) Let Assumption \ref{assum-1} be satisfied.
Suppose that the positive numbers $\bar \alpha, \beta,\delta$ and $q>1$
satisfy
\begin{equation*}
\bar \alpha  -\frac1q   +  \gamma>\beta +\delta.
\label{cond:1}
\end{equation*}
    If $T  \in (0,\infty)$, then  the operator
\begin{equation*}
\Lambda ^ {-\bar\alpha} :\mathbb{ L}^q (0,T;D(\CA ^\gamma)) \to
{{C}}^\beta_b ([0,T];D(\CA ^\delta)) \; \label{2.13b}
\end{equation*}
is bounded.
\delb{
If $T=\infty$  and   the semigroup $\{e^{-t\CA }\}_{t\ge 0} $ is
exponentially bounded on $E$, i.e., for some  $ a>0$, $C>0$
\begin{eqnarray}
 |e^{-t\CA }|_{L(E,E)} &\le& Ce^{-at}, \; t \ge 0,
\label{exp-bound}
\end{eqnarray}
then  for any $f \in \LL^q_0(0,T;D(\CA ^\gamma))$ the function
$u =\Lambda ^ {-\alpha} f$ belongs to
${{\mathcal{C}}}^\beta([0,T];D(\CA ^\delta))$.
 Moreover, the operator $\Lambda ^ {-\alpha}$ is a bounded map in the
above spaces.
\begin{equation}
\Lambda ^ {-\alpha} : L^q(0,T;D(\CA ^\gamma)) \to
{{\mathcal{C}}}^\beta([0,T];D(\CA ^\delta)), \label{2.13c}
\end{equation}
is bounded.}
\end{lemma}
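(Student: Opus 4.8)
The final statement is Lemma~\ref{L:reg}, which asserts boundedness of the operator $\Lambda^{-\bar\alpha}\colon \mathbb{L}^q(0,T;D(\CA^\gamma))\to C^\beta_b([0,T];D(\CA^\delta))$ under the condition $\bar\alpha-\frac1q+\gamma>\beta+\delta$. Let me sketch how I would prove it.
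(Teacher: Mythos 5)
Your proposal contains no proof at all: after restating the claim you announce that you will ``sketch how to prove it'' and stop, so every substantive step is missing. To compare, the paper itself does not reprove this lemma either --- it is quoted from Brze\'zniak \cite{brzezniak} and Lemma 2.4 of Brze\'zniak--G\c{a}tarek \cite{brzezniaGatarek} --- but a blind proof would have to supply the actual argument, and yours does not.

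Concretely, what is absent is the following chain. One starts from the integral representation of the fractional power (Proposition \ref{Prop:2.0}),
\begin{equation*}
\bigl(\Lambda^{-\bar\alpha}\eta\bigr)(t)=\frac{1}{\Gamma(\bar\alpha)}\int_0^t (t-s)^{\bar\alpha-1}e^{-(t-s)\CA}\eta(s)\,ds ,
\end{equation*}
and must then (i) bound $\sup_{t}\lvert \CA^{\delta}(\Lambda^{-\bar\alpha}\eta)(t)\rvert_E$ and (ii) bound the increment $\CA^{\delta}\bigl[(\Lambda^{-\bar\alpha}\eta)(t_2)-(\Lambda^{-\bar\alpha}\eta)(t_1)\bigr]$ by $C\,|t_2-t_1|^{\beta}\,\lVert\eta\rVert_{\mathbb{L}^q(0,T;D(\CA^{\gamma}))}$. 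Both steps rest on the analytic-semigroup smoothing estimate $\lVert \CA^{\delta-\gamma}e^{-r\CA}\rVert\le C\,r^{-(\delta-\gamma)}$ together with $\lVert (e^{-h\CA}-I)\CA^{-\theta}\rVert\le C\,h^{\theta}$ for the increment, followed by H\"older's inequality in time with exponent $q'$; the hypothesis $\bar\alpha-\frac1q+\gamma>\beta+\delta$ is exactly what makes the resulting singular time integrals finite and produces the H\"older exponent $\beta$. None of this appears in your text, so there is nothing that can be checked or credited; as it stands the proposal is a restatement of the lemma, not a proof.
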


Finally, we present slight modifications of Proposition 2.2
and Theorem 2.6 from  \cite{brzezniak}. Corollary \ref{C:comp} is related to Lemma 3.3 of \cite{neerven}.
\begin{theorem}
\label{Th:compact}
Let Assumption \ref{assum-1} be satisfied.
 Assume that
$\bar \alpha \in (0,1]$ and $\delta,\rho\geq  0$ are such that
\begin{equation*}
\bar \alpha-\frac1q +\rho-\delta>-\frac1r.
\label{cond:2}
\end{equation*}
 Then the   operator
 \begin{equation}
 \CA  ^{\delta} \Lambda^{-\bar \alpha}\CA  ^ {-\rho} : L^q (0,T;\BB)
\to  L^r(0,T;\BB)
\label{cond:3}
\end{equation}
is bounded. Moreover, if the operator $\CA ^{-1}:\BB\to \BB$ is   compact, then the
operator in \eqref{cond:3}   is compact.
\end{theorem}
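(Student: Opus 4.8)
The plan is to reduce the boundedness statement to a scalar convolution inequality, using the explicit representation of $\Lambda^{-\bar\alpha}$, and then to deduce compactness by truncating the convolution kernel near its diagonal singularity and invoking the Arzel\`a--Ascoli theorem. First I would invoke Proposition~\ref{Prop:2.0}: for $f\in\LL^q(0,T;\BB)$ and a.e.\ $t\in[0,T]$,
\[
\big(\CA^\delta\Lambda^{-\bar\alpha}\CA^{-\rho}f\big)(t)
=\frac{1}{\Gamma(\bar\alpha)}\int_0^t (t-s)^{\bar\alpha-1}\,\CA^{\delta-\rho}e^{-(t-s)\CA}\,f(s)\,ds,
\]
because $\CA^\delta$ and $\CA^{-\rho}$ commute with the semigroup and combine into $\CA^{\delta-\rho}$. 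Hence $\CA^\delta\Lambda^{-\bar\alpha}\CA^{-\rho}$ is a convolution operator with the $L(\BB)$-valued kernel $k(\tau):=\Gamma(\bar\alpha)^{-1}\tau^{\bar\alpha-1}\CA^{\delta-\rho}e^{-\tau\CA}$, $\tau\in(0,T]$. Since $-\CA$ generates an analytic semigroup (Assumption~\ref{assum-1}), one has $\|\CA^{\delta-\rho}e^{-\tau\CA}\|_{L(\BB)}\le C\,\tau^{-(\delta-\rho)}$ when $\delta\ge\rho$ and $\le C$ when $\delta\le\rho$, so $\|k(\tau)\|_{L(\BB)}\le C\,\tau^{\bar\alpha-1-(\delta-\rho)^+}$. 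The hypothesis $\bar\alpha-\tfrac1q+\rho-\delta>-\tfrac1r$ is exactly the requirement that $\tau\mapsto\|k(\tau)\|_{L(\BB)}$ lie in $\LL^\sigma(0,T)$ with $\tfrac1\sigma=1+\tfrac1r-\tfrac1q$; the vector-valued Young convolution inequality (or, in the limiting case, the Hardy--Littlewood--Sobolev inequality), applied on the bounded interval $[0,T]$, then yields boundedness of $\CA^\delta\Lambda^{-\bar\alpha}\CA^{-\rho}$ from $\LL^q(0,T;\BB)$ to $\LL^r(0,T;\BB)$. (When $r<q$ one first bounds into $\LL^q(0,T;\BB)$ and uses the embedding $\LL^q(0,T;\BB)\hookrightarrow\LL^r(0,T;\BB)$.)

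For the compactness claim I would exploit two consequences of the compactness of $\CA^{-1}$ (already built into the compact resolvent of Assumption~\ref{assum-1}): first, $e^{-\tau\CA}$ is a compact operator on $\BB$ for every $\tau>0$; second, $\tau\mapsto k(\tau)$ is norm-continuous in $L(\BB)$ on every interval $[\varepsilon,T]$, $\varepsilon>0$. Now truncate: set $k_n:=\mathbf 1_{[1/n,T]}\,k$ and let $K_n$ denote the corresponding convolution operator. Since $\int_0^{1/n}\|k(\tau)\|_{L(\BB)}^\sigma\,d\tau\to0$ by the integrability just established, the same Young estimate gives $\|K-K_n\|\to0$ in $L\big(\LL^q(0,T;\BB),\LL^r(0,T;\BB)\big)$, where $K:=\CA^\delta\Lambda^{-\bar\alpha}\CA^{-\rho}$; so it suffices to prove that each $K_n$ is compact. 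Fixing $n$ and a bounded sequence $(f_m)$ in $\LL^q(0,T;\BB)$, a Riemann-sum approximation based on the norm-continuity of $k$ on $[1/n,T]$ and the compactness of each $k(\tau)$ shows that for every fixed $t$ the map $f\mapsto(K_nf)(t)=\int_0^{(t-1/n)^+}k(t-s)f(s)\,ds$ is a compact operator $\LL^q(0,T;\BB)\to\BB$, while the same continuity makes $\{K_nf_m\}_m$ an equicontinuous family of $\BB$-valued functions on $[0,T]$. By Arzel\`a--Ascoli a subsequence of $(K_nf_m)_m$ converges in $C([0,T];\BB)$, hence in $\LL^r(0,T;\BB)$ since $[0,T]$ is bounded; thus $K_n$ is compact, and therefore so is $K=\lim_n K_n$.

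I expect the boundedness part to be routine once the representation formula is in hand; the only delicate point there is matching the exponent condition in every parameter range, in particular the limiting Young exponent, where the finiteness of the interval is used, and the sign of $\delta-\rho$. The real obstacle is the compactness statement: the operator $\CA^\delta\Lambda^{-\bar\alpha}\CA^{-\rho}$ regularises only in the spatial variable and acts pointwise in time, so compactness cannot follow from a compact spatial embedding alone---one genuinely needs to combine (i) compactness of $e^{-\tau\CA}$ for $\tau>0$, (ii) operator-norm continuity of the truncated kernel, and (iii) norm-approximation of $K$ by the $K_n$, which together feed Arzel\`a--Ascoli. This is the ``slight modification'' of Proposition~2.2 and Theorem~2.6 of \cite{brzezniak} referred to above: I would simply carry that argument over to the present setting $(\BB,\CA,\Lambda)$.
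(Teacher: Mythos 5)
You should be aware that the paper itself gives no proof of Theorem \ref{Th:compact}: it is imported, as a ``slight modification'', from Proposition 2.2 and Theorem 2.6 of \cite{brzezniak}, where the proof rests on hypothesis (3) of Assumption \ref{assum-1} (bounded imaginary powers) via Dore--Venni/mixed-derivative type arguments together with a Sobolev embedding in the time variable. Your argument uses none of that: starting from the representation of Proposition \ref{Prop:2.0} you only invoke analyticity, i.e.\ $\|\CA^{\delta-\rho}e^{-\tau\CA}\|\le C\tau^{-(\delta-\rho)^+}$, and then Young's inequality. This is correct, and your claim that the hypothesis coincides with the $L^\sigma$-integrability of $\tau\mapsto\|k(\tau)\|$ is true, precisely in the sub-range $\delta\ge\rho$ and $r\ge q$; there your compactness scheme (truncation $k_n=\mathbf 1_{[1/n,T]}k$, operator-norm continuity of $k$ on $[1/n,T]$, compactness of $\CA^{\delta-\rho}e^{-\tau\CA}=\bigl(\CA^{\delta-\rho}e^{-\tau\CA/2}\bigr)e^{-\tau\CA/2}$, Arzel\`a--Ascoli, and $\|K-K_n\|\to0$) is also sound.

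The gap is that the hypothesis of Theorem \ref{Th:compact} covers parameter ranges your argument cannot reach, so the proposal does not prove the statement as written. First, if $\rho>\delta$ the hypothesis reads $\bar\alpha+\rho-\delta>\frac1q-\frac1r$, while your kernel bound only gives $\|k(\tau)\|\lesssim\tau^{\bar\alpha-1}$, so Young requires the strictly stronger condition $\bar\alpha>\frac1q-\frac1r$: the surplus spatial smoothing $\CA^{-(\rho-\delta)}$ must be converted into a gain of time integrability, and no pointwise-in-$\tau$ operator-norm bound on the kernel can effect that trade. Second, if $r<q$ the hypothesis allows $\delta-\rho\ge\bar\alpha$ (e.g.\ $\bar\alpha=\tfrac12$, $\delta=1$, $\rho=0$, $q$ large, $r$ near $1$); then $\|k(\tau)\|\lesssim\tau^{\bar\alpha-1-(\delta-\rho)}$ is not locally integrable, Young is unavailable, and your fallback (bound into $\LL^q$, then embed into $\LL^r$) fails because the operator need not map into $\LL^q$ at all: a single-eigenmode test with $g=\mathbf 1_{[0,1/\mu]}$ shows $\CA^{\delta}\Lambda^{-\bar\alpha}$ is unbounded on $\LL^q$ whenever $\delta>\bar\alpha$, and even at the corner $\delta=\bar\alpha=1$, $\rho=0$ the operator $\CA\Lambda^{-1}$ is bounded on $\LL^q$ only by maximal regularity (Dore--Venni from BIP, or de Simon in the Hilbert setting), never by kernel estimates, since $\|\CA e^{-\tau\CA}\|\sim\tau^{-1}$. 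The same restriction propagates to your compactness step, because the operator-norm convergence $K_n\to K$ uses $\|k\|\in L^\sigma(0,T)$ near $\tau=0$. In short, your proof is a correct and more elementary argument in the regime where this paper actually uses the result (e.g.\ $\Lambda^{-1}$ on $\LL^p(0,T;\BB)$ and the situations of Corollary \ref{C:comp}), but for the theorem in its stated generality the unused BIP assumption is essential, and an argument along the lines of the cited source is required.
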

\begin{remark}
In view of Theorem \ref{Th:compact}
$$\Lambda ^ {-1}:\LL ^ p(0,T;E)\to \LL ^ p(0,T;E)
$$
is a well defined bounded linear operator for $p\in [1,\infty)$.
Observe, that $\Lambda^{-1} \eta$ is the solution to the Cauchy problem
$$
\lk\{ \begin{array}{rcl} \dot u(t)&=& \CA  u(t)+\eta(t),\\u(0)&=&0.\end{array}\rk.
$$
\end{remark}
\begin{corollary}\label{C:comp} Assume the first set of assumptions of Theorem
\ref{Th:compact} are satisfied. Assume  that  three  non-negative
numbers $\bar \alpha,  \beta, \delta$ satisfy  the  following condition
\begin{equation*}
\bar \alpha - \frac1q>\beta + \delta.
\label{cond:1a}
\end{equation*}
 Then the operator
$\Lambda^{-\bar \alpha}: \LL^q(0,T;\BB ) \to
{C}^\beta_b([0,T];D(\CA ^\delta))$ is bounded. Moreover, if the
operator $\CA ^{-1}:E\to E$ is  compact, then  the
operator $\Lambda^{-\bar \alpha}: \LL^q(0,T;\BB ) \to {C}^\beta_b([0,T];D(\CA ^\delta))$  is also compact.
\par
{ In particular, if $\bar \alpha > \frac1 q$ and the operator $\CA ^{-1}:\BB\to \BB$ is compact, the map
$\Lambda^{-\bar \alpha}:L^q(0,T;\BB ) \to C_b^0([0,T];\BB )$ is compact.
\delb{- Compare with the beginning of Section 8.}}
\end{corollary}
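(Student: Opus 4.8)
\smallskip\noindent\textbf{Proof plan (Corollary~\ref{C:comp}).}
The plan is to read off both assertions from the two results already stated in this appendix — Lemma~\ref{L:reg} and Theorem~\ref{Th:compact} — combined with the composition law $\Lambda^{-\bar\alpha_1}\Lambda^{-\bar\alpha_2}=\Lambda^{-(\bar\alpha_1+\bar\alpha_2)}$ for negative fractional powers of the positive operator $\Lambda$ and the elementary fact that a bounded operator composed with a compact operator is compact. For the \emph{boundedness} statement I would simply invoke Lemma~\ref{L:reg} with $\gamma=0$, so that $D(\CA^{\gamma})=\BB$: the condition $\bar\alpha-\tfrac1q+\gamma>\beta+\delta$ required there reduces precisely to the hypothesis $\bar\alpha-\tfrac1q>\beta+\delta$. (If that lemma is only available for strictly positive $\beta,\delta$, one first enlarges $\beta,\delta$ slightly, keeping the strict inequality, and composes with the continuous inclusion $C^{\beta'}_b([0,T];D(\CA^{\delta'}))\hookrightarrow C^{\beta}_b([0,T];D(\CA^{\delta}))$.)

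For the \emph{compactness} statement, the plan is to factor $\Lambda^{-\bar\alpha}=\Lambda^{-\bar\alpha_2}\circ\Lambda^{-\bar\alpha_1}$ with $\bar\alpha_1,\bar\alpha_2>0$, $\bar\alpha_1+\bar\alpha_2=\bar\alpha$, and an auxiliary exponent $\delta_1>0$, so that the first factor is compact and the second bounded. Using that $\bar\alpha-\tfrac1q>\beta+\delta$ is strict, I would choose $\bar\alpha_2$ with $\tfrac1q+\beta+\delta<\bar\alpha_2<\bar\alpha$, put $\bar\alpha_1:=\bar\alpha-\bar\alpha_2$ (shrinking it if necessary so that $\bar\alpha_1\le1$), and take $\delta_1\in(0,\bar\alpha_1)$. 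Then Theorem~\ref{Th:compact} applies with spatial exponents $\delta_1$ and $\rho=0$ and with $r=q$, since $\bar\alpha_1-\tfrac1q-\delta_1>-\tfrac1q$; as $\CA^{-1}$ is compact it gives that $\Lambda^{-\bar\alpha_1}\colon L^q(0,T;\BB)\to L^q(0,T;D(\CA^{\delta_1}))$ is \emph{compact}. And Lemma~\ref{L:reg} applies with $\gamma=\delta_1$, since $\bar\alpha_2-\tfrac1q+\delta_1>\beta+\delta$ (because $\bar\alpha_2-\tfrac1q>\beta+\delta$ and $\delta_1>0$), giving that $\Lambda^{-\bar\alpha_2}\colon L^q(0,T;D(\CA^{\delta_1}))\to C^{\beta}_b([0,T];D(\CA^{\delta}))$ is bounded; composing the two yields the claim. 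A shorter alternative that bypasses Theorem~\ref{Th:compact}: enlarge $\beta,\delta$ to $\beta'>\beta$, $\delta'>\delta$ with $\bar\alpha-\tfrac1q>\beta'+\delta'$, use Lemma~\ref{L:reg} to land in $C^{\beta'}_b([0,T];D(\CA^{\delta'}))$, and note this space embeds \emph{compactly} into $C^{\beta}_b([0,T];D(\CA^{\delta}))$, the spatial inclusion $D(\CA^{\delta'})\hookrightarrow D(\CA^{\delta})$ being compact because $\CA^{-1}$ is, and the time-regularity gain $\beta'>\beta$ upgrading this via an Arzela--Ascoli argument for vector-valued H\"older functions.

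The ``in particular'' clause is the special case $\beta=\delta=0$ of the above: then the target is $C^{0}_b([0,T];\BB)$ and the hypothesis reads $\bar\alpha>\tfrac1q$; when $\bar\alpha>1$ one first decomposes $\Lambda^{-\bar\alpha}$ into finitely many negative powers of exponent $\le1$ so that Theorem~\ref{Th:compact} may be applied to one of them. I expect the only genuine work to be the exponent bookkeeping in the factorization — confirming that the hypotheses of Theorem~\ref{Th:compact} (for $\bar\alpha_1$) and of Lemma~\ref{L:reg} (for $\bar\alpha_2$) can be met simultaneously, which upon adding the two inequalities is equivalent to the single hypothesis $\bar\alpha-\tfrac1q>\beta+\delta$ — together with the routine but not entirely trivial facts that $\CA^{-s}$ is compact for all $s>0$ once $\CA^{-1}$ is, and that a family bounded in $C^{\beta'}([0,T];Y)$ whose range is relatively compact in a larger space $Z$ is relatively compact in $C^{\beta}([0,T];Z)$ for $\beta<\beta'$.
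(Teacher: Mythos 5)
Your argument is correct. Note first that the paper does not actually prove Corollary~\ref{C:comp}: it is presented as a slight modification of known results, with a pointer to Proposition~2.2 and Theorem~2.6 of \cite{brzezniak} and to Lemma~3.3 of \cite{neerven}, so there is no in-paper proof to compare against. Your reconstruction from the two results stated in the appendix is precisely the standard route of the cited reference: boundedness is Lemma~\ref{L:reg} with $\gamma=0$, and compactness follows from the factorisation $\Lambda^{-\bar\alpha}=\Lambda^{-\bar\alpha_2}\Lambda^{-\bar\alpha_1}$, the first factor made compact by Theorem~\ref{Th:compact} and the second bounded by Lemma~\ref{L:reg}. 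Your exponent bookkeeping checks out: with $\bar\alpha_2\in(\frac1q+\beta+\delta,\,\bar\alpha)$, $\bar\alpha_1=\bar\alpha-\bar\alpha_2$ and $\delta_1\in(0,\bar\alpha_1)$, the hypothesis of Theorem~\ref{Th:compact} (with $\rho=0$, $r=q$) reduces to $\bar\alpha_1>\delta_1$ and that of Lemma~\ref{L:reg} (with $\gamma=\delta_1$) to $\bar\alpha_2-\frac1q+\delta_1>\beta+\delta$, both of which hold. Two small points are worth making explicit: (i) Theorem~\ref{Th:compact} is stated only for exponents in $(0,1]$, so one should record that $\bar\alpha_1<1$ holds automatically once $\bar\alpha\le 1$, the range in which $\Lambda^{-\bar\alpha}$ is given by Proposition~\ref{Prop:2.0}; and (ii) the composition step uses that $\Lambda$ commutes with $\CA^{\delta_1}$, so that $\Lambda^{-\bar\alpha_2}$ restricted to $\LL^q(0,T;D(\CA^{\delta_1}))$ is indeed the operator to which Lemma~\ref{L:reg} applies. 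Your alternative route via the compact embedding $C^{\beta'}_b([0,T];D(\CA^{\delta'}))\hookrightarrow C^{\beta}_b([0,T];D(\CA^{\delta}))$ is also sound, but the vector-valued Arzel\`a--Ascoli step it requires is not among the tools provided in the appendix, so the factorisation argument is the cleaner choice here.
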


\medskip

Let $\mathfrak{A}=(\Omega, \CF,\BF,\PP)$ be a complete probability space
and $\BF=(\CF_t)_{t\in[0,T]}$ be a filtration satisfying the usual conditions.
Let $H$ and $H_1$ be two Hilbert spaces, $\CW$ be an $H_1$-valued cylindrical Wiener process,
$L_{HS}(H_1,H)$ be the space of linear operators belonging to $L(H_1,H)$ with finite
Hilbert--Schmidt norm, and
\begin{gather*}\label{def.MA}
\CN_{\MA}^p(0,T;H):=\Big\{ \xi:[0,T]\times \Omega\to H \ \Big| \
\mbox{$\xi$ is progressively measurable over $\mathfrak{A }$ }
\notag\\
\text{ and } \EE\int_0^T |\xi(s)|_{L_{HS}(H_1,H)}\, ds<\infty\Big\}.
\end{gather*}
To handle the stochastic convolution, let us define for a process $\xi\in \CN_{\MA}^2(0,T;H)$
the operator $\mathfrak{S}_A$ by
$$\mathfrak{S}_{\CA}\xi(t)=\int_0^ t e^{-(t-s)\CA} \xi(s)d\CW(s) 
.
$$
\begin {corollary}\label{brz:con}(Compare with Corollary 3.5 of \cite[p.\ 266]{brzezniak} and \cite[Lemma 3.3]{neerven})
If non--negative numbers $\beta,\delta$ and $\rho$ satisfy the following
$$
\beta+\delta+\frac 1p<\frac 12 -\rho,
$$
then a process $x(t)$, $t\in[0,T]$ belonging to  $\CN_{\MA}^q(0,T;\CA ^{-\rho}H)$  possesses a modification $\tilde x(t)$, $t\in[0,T]$ such that
$$
\tilde x \in C^{\beta}_b([0,T];D(\CA ^\delta)), \quad a.s.,
$$
and there exists some constant $C_T>0$ such that
$$
\EE \Big[ \|\tilde x\|^p_{\CC^{\beta}_b([0,T];D(\CA ^\delta))} \Big]
\le C_T 
\EE \Big[ \int_0^T \|\CA ^{-\rho}x(s)\|^p_{L_{HS}(H_1,H)}\, ds \Big].
$$
\end{corollary}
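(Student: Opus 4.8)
The plan is to prove Corollary \ref{brz:con} by the factorisation method, in the form used by Brze\'zniak \cite{brzezniak} and van Neerven \cite{neerven}, reducing the stochastic regularity claim to the deterministic mapping property of $\Lambda^{-\bar\alpha}$ already recorded in Lemma \ref{L:reg} and Proposition \ref{Prop:2.0}. Since $\beta+\delta+\tfrac1p<\tfrac12-\rho$ and $\beta,\delta,\rho\ge0$, I would first fix an auxiliary exponent $\alpha$ with
\[
\beta+\delta+\tfrac1p<\alpha<\tfrac12-\rho ,
\]
so in particular $\alpha\in(0,\tfrac12)$ and $p>2$. With such an $\alpha$ the stochastic Fubini theorem gives the factorisation identity
\[
(\mathfrak{S}_{\CA}x)(t)=\frac{\sin(\pi\alpha)}{\pi}\int_0^t(t-s)^{\alpha-1}e^{-(t-s)\CA}\,Y_\alpha(s)\,ds ,\qquad t\in[0,T],
\]
where $Y_\alpha(s):=\int_0^s(s-\sigma)^{-\alpha}e^{-(s-\sigma)\CA}x(\sigma)\,d\CW(\sigma)$; its validity in the present cylindrical setting follows from the integrability hypothesis $x\in\CN^q_{\MA}(0,T;\CA^{-\rho}H)$ together with $\alpha<\tfrac12$, and I would justify it by approximating the kernel and invoking the classical stochastic Fubini theorem as in \cite{pratozab}.

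The argument then splits into two independent estimates. First, the stochastic one: I would apply the \BDG inequality in the form \eqref{HSnorm} to $Y_\alpha(s)$ and use the analytic smoothing bound $\|\CA^{\rho}e^{-t\CA}\|_{L(H)}\le C_\rho\,t^{-\rho}$ to get
\[
\EE\,\|Y_\alpha(s)\|_H^p\le C_p\,\EE\Big(\int_0^s(s-\sigma)^{-2\alpha-2\rho}\,\|\CA^{-\rho}x(\sigma)\|^2_{L_{HS}(H_1,H)}\,d\sigma\Big)^{p/2} .
\]
Because $2\alpha+2\rho<1$, the kernel $(s-\sigma)^{-2\alpha-2\rho}$ is locally integrable, and an application of the generalised Young (or H\"older) inequality together with Fubini's theorem yields $\EE\,\|Y_\alpha\|^p_{L^p(0,T;H)}\le C_T\,\EE\int_0^T\|\CA^{-\rho}x(s)\|^p_{L_{HS}(H_1,H)}\,ds$. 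Secondly, the deterministic one: the operator $f\mapsto\int_0^t(t-s)^{\alpha-1}e^{-(t-s)\CA}f(s)\,ds$ is, up to the factor $\Gamma(\alpha)$, exactly $\Lambda^{-\alpha}$ by Proposition \ref{Prop:2.0}, and Lemma \ref{L:reg} applied with $\bar\alpha=\alpha$, $\gamma=0$ and time exponent $p$ — whose hypothesis $\alpha-\tfrac1p>\beta+\delta$ is precisely our choice of $\alpha$ — shows that $\Lambda^{-\alpha}:L^p(0,T;H)\to C^{\beta}_b([0,T];D(\CA^\delta))$ is bounded.

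Combining the two steps finishes the proof: for a.e.\ $\omega$ one has $Y_\alpha(\cdot,\omega)\in L^p(0,T;H)$, hence the right-hand side of the factorisation identity defines an element of $C^{\beta}_b([0,T];D(\CA^\delta))$, which is the sought continuous modification $\tilde x$; since it coincides $\PP$-a.s.\ with $(\mathfrak{S}_{\CA}x)(t)$ for each fixed $t$, it is indeed a modification, and the moment bound
\[
\EE\,\|\tilde x\|^p_{C^{\beta}_b([0,T];D(\CA^\delta))}\le C\,\|\Lambda^{-\alpha}\|^p\,\EE\,\|Y_\alpha\|^p_{L^p(0,T;H)}\le C_T\,\EE\int_0^T\|\CA^{-\rho}x(s)\|^p_{L_{HS}(H_1,H)}\,ds
\]
follows by composing the bounds of the two steps. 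I expect the main obstacle to be the clean justification of the stochastic Fubini identity and of the \BDG step in the cylindrical Hilbert--Schmidt framework together with the operator weight $\CA^{-\rho}$; the choice of the intermediate exponent $\alpha$ is otherwise forced, and it is exactly here — in requiring the interval $(\beta+\delta+\tfrac1p,\tfrac12-\rho)$ for $\alpha$ to be nonempty — that the hypothesis $\beta+\delta+\tfrac1p<\tfrac12-\rho$ enters, balancing the integrability needed in the stochastic estimate ($\alpha<\tfrac12-\rho$) against the smoothing needed in the deterministic estimate ($\alpha>\beta+\delta+\tfrac1p$).
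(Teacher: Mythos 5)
Your proposal is correct, and it is essentially the argument the paper relies on: Corollary~\ref{brz:con} is not proved in the paper at all but imported from Brze\'zniak's Corollary~3.5 and van Neerven--Veraar--Weis, both of which rest on exactly the Da Prato--Kwapie\'n--Zabczyk factorisation you use, with the intermediate exponent $\alpha\in(\beta+\delta+\tfrac1p,\tfrac12-\rho)$, the \BDG/Young estimate for $Y_\alpha$, and the deterministic mapping property of $\Lambda^{-\alpha}$ from Proposition~\ref{Prop:2.0} and Lemma~\ref{L:reg}. Your identification of the hypothesis $\beta+\delta+\tfrac1p<\tfrac12-\rho$ as precisely the non-emptiness of the admissible interval for $\alpha$ matches the role it plays in those sources, so the reconstruction is faithful and complete.
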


\del{For a function $f\in L^q(0,T;E)$ we define the operator $\Lambda_{T,\alpha}$ by
$$
(\Lambda 
_{T,\alpha} f)(t)=\frac 1 {\Gamma(\alpha)} \int_0^ t(t-s)^{\alpha-1} e^{-(t-s)\CA }f(s)\, ds ,
t\in (0,T).
$$
Observe, that $\Lambda_{T,a} f$ is the solution to the Cauchy problem
$$
\lk\{ \begin{array}{rcl} \dot u(t)&=& \CA  u(t)+f(t),\\u(0)&=&0.\end{array}\rk.
$$

\begin{lemma}
Assume a Banach space $E$ and a linear operator satisfy the condition \ref{assum-1}. Suppose that the positive numbers $\alpha,\beta,\delta$ satisfy
$$
0<\beta<\alpha-\frac 1q +\gamma-\delta.
$$
Then, if $T\in(0,\infty)$ and $f\in \LL ^q(0,T;D(\CA ^\gamma))$, the function $u=\Lambda_{T,\alpha}f$ satisfies
$$u\in\CC^{(\beta)}_b([0,T];D(\CA ^\delta)).
$$
\end{lemma}}

Now we can start with the proof of Corollary \ref{cor.main}.

\begin{proof}[Proof of Corollary \ref{cor.main}]
\del{\begin{corollary}\label{cor.main}
Let $p>6$ be fixed and $\alpha,\beta$ two real numbers such that $\alpha<0$, $\beta\in (0,\frac 12)$,  $\frac \alpha 2 +\beta<0$, and $\beta<\frac 14-\frac 2{3p} $.
Then, under the assumption of Theorem \ref{main.thm}
and the additional assumption $\EE|u_0|_{\LLn}^p<\infty$ and $ \EE|\nabla v_0|_{L^2}^{2p}<\infty$ we have
$$
\EE\|u\|^\frac {2p}3_{C^{\beta}_b([0,T];H^\alpha)}<\infty.
$$
If $\alpha,\beta$ additional satisfy $\beta\in(0,\frac 12)$, $\alpha\in(0,1)$,   $\beta+\alpha/2<\frac 14 $, and $\beta+\alpha/2<\frac 12 -\frac 1p$, then
$$
\EE\|v\|_{C^{\beta}_b([0,T];H^\alpha)}<\infty.
$$
\end{corollary}}

Let us define the operator $\MF_{r_u\DeltaA+\gamma I}:\LL^ q(0,T;L^2(\CO)) \rightarrow \LL^ q(0,T;L^2(\CO))$ for $q>1$ by
$$
(\MF_{r_u\DeltaA+\gamma I} \eta) (t)=\int_0^ t e^{(t-s)(r_u\DeltaA+\gamma I)} \eta(s)\, ds,\quad t\in[0,T],\,\, \eta \in  \LL^ q(0,T;L^2(\CO)).
$$
We observe that the operator $\MF_{r_u\DeltaA+\gamma I}$  is identical to the operator$\Lambda^{-1}$ of Proposition \ref{Prop:2.0}.
We note that the process $u$ is  given by for $t\in[0,T]$
\DEQS
u(t)&=& e^{t(r_u\DeltaA+\gamma I)}u_0+\int_0^ t e^{(t-s)(r_u\DeltaA+\gamma I)} \eta(s)\, ds+\int_0^ t e^{(t-s)(r_u\DeltaA+\gamma I)} u(s)\,dW_1(s)
\\
&=& e^{t(r_u\DeltaA +\gamma I) }u_0 + (\MF_{r_u\DeltaA+\gamma I} \eta) (t)+ \mathfrak{S}_{r_u\DeltaA+\gamma I}(u)(t),
\EEQS
with $\eta=\Div( u (\cdot)\,\nabla v(\cdot))$ and
\DEQS
v(t)&=& e^{t(r_v\DeltaA -\alpha I)}v_0+\int_0^ t e^{(t-s)(r_u\DeltaA-\alpha I)} u(s)\, ds+\int_0^ t e^{(t-s)(r_v\DeltaA-\alpha I)} v(s)\,dW_2(s)
\\
&=& e^{t(r_v\DeltaA -\alpha I)}v_0 + (\MF _{r_u\DeltaA+\gamma I}u) (t)+ (\mathfrak{S}_{r_u\DeltaA-\alpha I}v)(t).
\EEQS
First, we tackle $u$. We note that the operator $r_u\DeltaA+\gamma I$ satisfies Assumption \ref{assum-1}.
In Lemma \ref{L:reg} it can be shown that the operator $\CA$ satisfying the assumptions \ref{assum-1}.
We choose $\bar \alpha,q, \rho,\beta, \delta$ appropriately in Lemma \ref{L:reg} so that the results of
Lemma \ref{L:reg} hold for the operator $\CA=r_u\DeltaA+\gamma I$. Let us choose $\bar \alpha=1, \,1<q, \rho<-3/4$
and $\beta,\delta>0$ such that
\[
0 < \beta +\delta <1+\rho  -\frac 1q.
\]
By Lemma \ref{L:reg}, we note that for $\eta \in \LL^q(0,T;H^{2\rho })$, $\MF_\CA\eta \in  C^{\beta}_b([0,T];H^{2\delta})$, i.e., in particular,
\begin{align}\label{esti eta 12}
\|\MF_\CA\eta\|_{ C^{\beta}_b([0,T];H^{2\delta})}\le C\, \|\eta\|_{\LL^q(0,T;H^{2\rho })}.
\end{align}
As $\eta(\cdot)=\Div( u (\cdot)\,\nabla v(\cdot))$, then,  by the Sobolev embedding $L^1(\CO)\hookrightarrow H^{2\rho +1}(\CO)$, as $2\rho +1<-\frac 12$. Therefore as $H^{\theta}(\CO) \hookrightarrow L^{\infty}(\CO)$ for $\theta>1/2$, and using the interpolation inequality for
$H^{\theta}(\CO)= [H^1(\CO),L^2(\CO)]_{\theta} $
we obtain
\begin{align*}
|\Div( u (s)\,\nabla v(s)) |_{H^{2\rho}}
&\le
| u(s)\,\nabla v(s) |_{H^{2\rho+1}}
\le
| u (s)\,\nabla v(s) |_{L^1} \le |u(s)|_{L^1} |\nabla v(s)|_{L^\infty}\notag\\
&\le |u(s) |_{L^1} |\nabla v (s)|^{1-\theta}_{L^2}|\nabla v (s)|^{\theta}_{H^1}.
\end{align*}
For any $q\in(2,4)$ there exists a constant $C_q>0$ such that the $\LL^q(0,T;H^{2\rho}(\CO))$-norm of the function $\eta$ 
can be estimated by
$$
\|\eta\|_{\LL^q(0,T;H^{2\rho})} \le C_q\, \sup_{0\le s\le T}|u(s)|_{L^1}\, \sup_{0\le s\le T}|\nabla v (s)|^{1-\theta}_{L^2}
\big(\int_0^T |\nabla v (s)|^{2}_{H^1}\, ds\Big)^\frac 1{q},
$$
where $\theta  =2/q$.
In particular, the H\"older inequality gives for $\varrho=2/3$
\DEQS\label{eq ope.1}
\lqq{
\EE \Big[ \|\eta\|_{\LL^q(0,T;H^{{2\rho}})} ^{p\varrho  } \Big] }&&
\\
&\le&
\lk(\EE \Big[ \sup_{0\le s\le T}|u(s)|^p_{L^1} \Big]\rk)^ {\varrho}\, \lk( \EE \Big[\sup_{0\le s\le T}|\nabla v (s)|^{2p}_{L^2}\Big]\rk)^ {{\frac{(1-\theta)\varrho}{2} }} \lk( \EE[\int_0^ T |\nabla v (s)|^{2}_{H^1}\, ds]^p\rk)^{\frac {\varrho}{q}}  .
\EEQS
By estimate \eqref{eee} we know that the right hand side of \eqref{eq ope.1} is bounded for $p\geq 1$.
Using \eqref{esti eta 12} and \eqref{eq ope.1} we obtain
$
\EE \|\MF_\CA\eta\|_{ C^{\beta}_b([0,T];H^{2\delta})}^{p\varrho }$ is finite.
It remains to analyse  the stochastic term $\mathfrak{S}_{\CA}(u)$.
Here, we apply Corollary \ref{brz:con}  for $p>4,$ $\beta,\delta>0$ $\varrho >0$
for $\beta+\delta+\frac 1p<\frac 12-\rho $, $\rho>\frac 14$ and \eqref{HSnorm} to get
\DEQSZ\label{obenu0}
\EE \Big[ \|\mathfrak{S}_{\CA}u\|^{p\varrho}_{C^\beta_b([0,T];H^{2 \delta})} \Big]
\le C\Big(\EE \big[\sup_{0\le s\le T}|u(s)|^p_{ H^{-2\rho}}\big]\Big)^{\varrho}
\le C\Big(\EE \big[\sup_{0\le s\le T}|u(s)|^p_{L^1}\big]\Big)^{\varrho}.
\EEQSZ
By Theorem \ref{main.thm}, estimate \eqref{eeep} we observe that the right hand side of \eqref{obenu0} is finite. Hence we conclude that
$$
\EE\Big[\|u-e^{(\cdot)(r_u\DeltaA +\gamma I) }u_0\|^{\varrho
p}_{C^{\beta}_b([0,T];H^{2\delta})}\Big] <\infty.
$$
This proves the aasertion for $u$.
\medskip

It remains to verify the assertion for $v$. Let us choose $\bar \alpha=1, \,1<q, \rho<-3/4$
and $\beta,\delta>0$ in Lemmma \ref{L:reg} such that
\[
0 < \beta +\delta <1+\rho  -\frac 1q.
\]
We now have the estimate
\begin{align*}
\|\MF_{r_v\DeltaA-\alpha I}\nabla u\|_{ C^{\beta}_b([0,T];H^{2\delta})}\le C\, \|\nabla u\|_{\LL^q(0,T;H^{2\rho})}
\le C\, \|u\|_{\LL^q(0,T;H^{2\rho+1})}
\le C\, \|u\|_{\LL^q(0,T;L^1)}
.
\end{align*}
Hence,
\begin{align*}\label{eq b17}
\EE \Big[ \|\MF_{r_v\DeltaA-\alpha I}\nabla u\|_{ C^{\beta}_b([0,T];H^{2\delta})}^{p } \Big]
\leq C \EE \Big[ \sup_{0\le s\le T}|u(s)|^p_{L^1}\Big].
\end{align*}
To analyse the stochastic term, we apply Corollary \ref{brz:con}
for $p>4,$ $\beta,\delta>0$
for $\beta+\delta+\frac 1p<\frac 12-\rho $, $\rho>\frac 14$, and \eqref{HSnorm} to get
\DEQSZ\label{obenu0 12}
\EE \Big[ \|\mathfrak{S}_{r_v\DeltaA-\alpha I}\nabla v\|^{p}_{C^\beta_b([0,T];H^{2 \delta})} \Big]
&\le C\EE \big[\sup_{0\le s\le T}|\nabla v(s)|^p_{ H^{-2\rho}}\big]
\notag\\
&\le C\Big(\EE \big[\sup_{0\le s\le T}|\nabla v(s)|^{2p}_{L^2}\big]\Big)^{1/2}
\EEQSZ
By Theorem \ref{main.thm}, estimate \eqref{eeep} we observe that right hand side of \eqref{obenu0 12} is finite. Hence we conclude that
$$
\EE\Big[ \|\nabla v-e^{(\cdot)(r_v\DeltaA -\alpha I) }\nabla v_0\|^{
p}_{C^{\beta}_b([0,T];H^{2\delta})}\Big] <\infty.
$$

\medskip
This gives the assertion for $v$.
\end{proof}

\end{document}